\documentclass[12pt]{article}
\usepackage{amsfonts,amssymb,latexsym,amsmath}
\usepackage{eucal}
\usepackage{geometry}
\usepackage{hyperref}
\usepackage{bigints}
\usepackage{soul}
\usepackage{lineno}
\usepackage{xcolor}
\usepackage{lipsum}
\usepackage{upgreek}
\usepackage[draft]{todonotes}
\usepackage{amsthm}
\usepackage{mathtools}

\mathtoolsset{showonlyrefs,showmanualtags}

\newtheorem{theorem}{Theorem}
\newtheorem{proposition}[theorem]{Proposition}

\newtheorem{lemma}[theorem]{Lemma}

\def\<{\langle}
\def\>{\rangle}

\newcommand{\Ric}{\mathrm{Ric}}

\newcommand\be{\begin{equation}} 
	\newcommand\ee{\end{equation}}
\newcommand{\comment}[1]

\begin{document}

	\title{Modified mean curvature flow of graphs in Riemannian manifolds}
	\author{Jocel F. N. de Oliveira, Jorge H. S. de Lira\thanks{Partially supported by CNPq.}\, \& Matheus N. Soares\thanks{Partially supported by CNPq and FACEPE.}}
	\date{}
	\maketitle
	
	\begin{abstract} 
		We obtain height, gradient, and curvature \emph{a priori} estimates for a modified mean curvature flow in Riemannian manifolds endowed with a Killing vector field. As a consequence, we prove the existence of smooth, entire, longtime solutions for this extrinsic flow with smooth initial data.
	\end{abstract}
	
	\section{Introduction} 
	    
    The mean curvature flow (\textit{MCF}) is a classical subject in Geometric Analysis and is modeled in terms of a widely studied geometric evolution equation (\cite{Andrews:20}, \cite{Ecker:96}, \cite{Mantegazza:11}, \cite{Zhu:02})  which can be interpreted variationally as a sort of  $L^2$-gradient descent for the the area functional.

    Our goal in this paper is to prove the long-time existence of entire smooth graphs evolving by a \emph{modified} mean curvature flow in a Riemannian complete non-compact manifold $\bar M$ endowed with a Killing vector field.
    
    We suppose that the Killing vector $X$ in $\bar M$ never vanishes and that the orthogonal distribution determined by $X$ is integrable. Let $M$ be a given complete integral leaf of this distribution, and let $\Phi: M \times \mathbb{R}\to \bar M$  be the flow generated by $X$ with initial values in $M$.  It can be easily verified that $\Phi$ is a global isometry between $\bar M$ and the warped product $M\times_\varrho \mathbb{R}$ endowed with the static warped metric
	\begin{align}\label{warped}
		\varrho^2(x)  \,{\rm d}s^2 + g,
	\end{align}
	where $s$ is the natural coordinate in $\mathbb{R}$ and $g$ is the induced Riemannian metric in $M\subset \bar M$. The coefficient $\varrho \in C^\infty(M)$ is the norm of $X$ that is preserved along its flow lines. In these coordinates, we have $X = \partial_s$ and the leaves $\textcolor{black}{M_s:=\Phi_s (M)}$ are totally geodesic hypersurfaces in $\bar M$, \textcolor{black}{where $\Phi_s(\cdot)=\Phi(\cdot,s)$.}

    We suppose that the integral leaf $M$ is a manifold with a pole $o$. In this case we denote by $r$ the radial coordinate given by the geodesic distance in $M$ measured from $o$. Given that condition, we assume that the radial sectional curvatures of $M$ satisfy
\begin{align}
\label{cond-1}
    K (\partial_r \wedge {\sf v}) \ge  - \frac{\xi''(r)}{\xi(r)}
    \end{align}
    for all $r>0$ and ${\sf v}$ tangent to the geodesic sphere of $M$ with radius $r$ and centered at $o$. In \eqref{cond-1} the function $\xi\in C^\infty([0, \infty))$ is positive for $r>0$ and satisfies the following conditions
    \begin{equation}
        \label{cond-2}
        \xi'(0)=1, \,\, \xi^{(2k)}(0) = 0 \,\, \mbox{ for } k \in \mathbb{N}.
    \end{equation}
    This is a sufficient condition for the validity of a generalized version of the Hessian comparison theorem \cite{Pigola:05}. Finally, we suppose that 
\begin{equation}\label{cond-3}
		\left|\frac{\partial_r \varrho}{\varrho}\right| \le \frac{\xi'(r)}{\xi(r)}\cdot
	\end{equation}
Later on, we will interpret condition \ref{cond-3} in terms of the principal curvatures of cylinders in $\bar M$ ruled by the flow lines of $X$ over geodesic spheres of $M$. We refer to those cylinders as \emph{Killing cylinders}. 

We also suppose that there exists a constant $L>0$ such that the Ricci curvature tensor $\overline{\mbox{Ric}}$ satisfies
\begin{equation}
    \label{cond-4}
    \overline{\mbox{Ric}} \ge - L.
\end{equation}
The geometric setting considered in this paper includes, as particular examples, the Euclidean space $\mathbb{R}^{n+1}=\mathbb{R}^n \times \mathbb{R}$ and the hyperbolic space $\mathbb{H}^{n+1} = \mathbb{H}^n \times_{\varrho} \mathbb{R}$ where $\varrho(r) = \cosh r$ as well as the Riemannian product $\mathbb{H}^n \times \mathbb{R}$ where $\varrho=1$.

 Now, we define a suitable notion of graph in this geometric context. The \textit{Killing graph} of a smooth function $u_0: M\to \mathbb{R}$ is by definition the hypersurface $\Sigma_0$ in $\bar M$ given by
	\begin{equation}\label{eq_2.2}
		\Sigma_0 = \{\Phi(x,u_0(x)) :  x\in M\}.
	\end{equation}
	A smooth map $\widetilde\Psi: M \times [0, T) \to \bar M$, $T>0$, defines a \textcolor{black}{\textit{modified mean curvature flow}} with initial condition $\Sigma_0$ if each map $\widetilde \Psi_t := \widetilde \Psi(\cdot, t): M \to \bar M$, $t\in [0, T)$, defines an isometric immersion with $\widetilde \Psi_0 (M) = \Sigma_0$ and
	\begin{align}\label{MCF-mod}
		(\partial_t \widetilde \Psi)^ \perp= n (H-\sigma) N,
	\end{align}
	where $\sigma$ is a constant, $H$ is the \textcolor{black}{\textit{mean curvature}} \textcolor{black}{function} of $\Sigma_t = \widetilde \Psi_t(M)$ with respect to the unit normal vector field $N$ and $\perp$ indicates the orthogonal projection onto the normal bundle of $\Sigma_t$.

    The modified mean curvature flow models the gradient descent of a constrained area functional whose stationary immersions have constant mean curvature $\sigma$. We refer the reader to section \ref{non-par} for further details in the variational meaning of equation \ref{MCF-mod}.

     Having fixed the geometric conditions about the curvatures in $M$ and $\bar M$, we are able to state our main existence result for the modified mean curvature flow \eqref{MCF-mod}.

     \begin{theorem}\label{thm_0.1}
		Let $M$ be a $n$-dimensional complete, non-compact oriented  Riemannian manifold with a pole $o$ and let  $\bar{M}$ be the warped product $M\times_\varrho\mathbb{R}$ for some positive function $\varrho\in C^\infty(M)$. Suppose that the curvature conditions {\eqref{cond-1}}, {\eqref{cond-3}} and \eqref{cond-4} hold.  Given a locally Lipschitz entire Killing graph $\Sigma_0$ over $M$, there exists a smooth modified mean curvature flow {\eqref{MCF-mod}} with initial condition $\Sigma_0$ defined for all $t> 0$ and for all $\sigma$ satisfying
	\begin{equation}\label{eq_2.13}
		\sigma < \frac{1}{n}\inf \left( \frac{|\bar\nabla  \varrho|}{\varrho}+(n-1) \frac{\xi'(r)}{\xi(r)}\right).
	\end{equation}
	\end{theorem}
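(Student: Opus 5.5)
We will follow the classical scheme of Ecker--Huisken for entire graphs: reduce to a parabolic PDE for the height function, solve Dirichlet problems on an exhaustion by balls, and obtain local a priori estimates independent of the radius. We then pass to the limit by Arzel\`a--Ascoli and a diagonal argument.

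\textbf{Step 1 (non-parametric form).} Writing $\Sigma_t$ as the Killing graph of $u(\cdot,t)$, the flow \eqref{MCF-mod} becomes, up to tangential diffeomorphisms,
\begin{equation}
\partial_t u = W\,(nH - n\sigma), \qquad nH=\mathrm{div}_M\!\Big(\frac{\nabla u}{\varrho W}\Big)+\Big\langle \frac{\nabla\varrho}{\varrho},\frac{\nabla u}{\varrho W}\Big\rangle,\qquad W=\sqrt{\varrho^{-2}+|\nabla u|^2}.
\end{equation}
This is a quasilinear parabolic equation, uniformly parabolic wherever $W$ is bounded. We first approximate the Lipschitz datum $u_0$ by smooth $u_0^k$ converging locally uniformly with locally uniform Lipschitz bounds. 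On each geodesic ball $B_R(o)$ we solve the initial-boundary problem with $u=u_0^k$ on $\partial B_R\times[0,T)$.

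\textbf{Step 2 (height estimate).} We bound $u$ on $B_R\times[0,T]$ by comparison with barriers built from Killing cylinders over geodesic spheres. By the Hessian comparison theorem under \eqref{cond-1}--\eqref{cond-2} and by \eqref{cond-3}, the cylinder over $\partial B_r$ has mean curvature bounded below by $\frac1n\big(\frac{|\bar\nabla\varrho|}{\varrho}+(n-1)\frac{\xi'}{\xi}\big)$. Hypothesis \eqref{eq_2.13} then makes these cylinders, and graphs of radial functions bent toward them, sub/supersolutions of the modified equation. This gives local height bounds depending only on $u_0$, $t$ and the compact set, and not on $R$. Together with the boundary data, it also yields boundary gradient estimates on $\partial B_R$, which are needed for the solvability of the Dirichlet problem.

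\textbf{Step 3 (interior gradient and curvature estimates).} This is the main obstacle. We compute the evolution of $v=\langle N,X\rangle^{-1}$, which is comparable to $\varrho W$, along the flow. Since $X$ is Killing, $\langle N,X\rangle$ satisfies a Jacobi-type equation
\begin{equation}
(\partial_t-\Delta)\langle N,X\rangle = (|A|^2+\overline{\Ric}(N,N))\langle N,X\rangle + (\text{terms from } \sigma),
\end{equation}
where the $\sigma$ term vanishes because $\sigma$ is constant. We then get $(\partial_t-\Delta)v\le -|A|^2v+Lv-2v^{-1}|\nabla v|^2$ using \eqref{cond-4}. We localize with a cutoff $\eta=(\rho_0^2-r^2-Ct)_+$, controlling $\Delta r^2$ via Hessian comparison, and run Ecker--Huisken's maximum principle argument on $\eta v$. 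Because of the $Lv$ term, the resulting bound is of the form $v\le C(R_0,t)$ with exponential growth in $t$. Curvature estimates follow similarly from the evolution of $|A|^2\varphi(v^2)$ localized by $\eta$, again using bounds on $\bar R$ and its derivatives on compacts. Higher derivatives come from standard parabolic Schauder theory, including the time-interior estimates that are needed since $u_0$ is merely Lipschitz.

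\textbf{Step 4 (limit).} The local estimates of Steps 2--3 are uniform in $R$ and $k$. Letting $R\to\infty$ and $k\to\infty$ along a diagonal subsequence therefore gives a smooth solution on $M\times(0,\infty)$ that attains $u_0$ continuously at $t=0$. This is the required entire flow for every $T$, hence for all $t>0$.
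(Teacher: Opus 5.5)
\paragraph{Overall.} Your architecture (smoothing the Lipschitz datum, Dirichlet problems on an exhaustion by geodesic balls, local height/gradient/curvature estimates, diagonal limit) is the paper's. Your gradient estimate in Step 3 is sound: $(\partial_t-\Delta)\langle X,N\rangle=(|A|^2+\overline{\Ric}(N,N))\langle X,N\rangle$ holds exactly (this is \eqref{eq_3.33}), and on compact sets the cutoff $(\rho_0^2-r^2-Ct)_+$ does the same job as the paper's $\zeta$.

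\paragraph{The gap is in Step 2.} You claim that Hessian comparison under \eqref{cond-1}--\eqref{cond-2}, together with \eqref{cond-3}, bounds the mean curvature of the Killing cylinder over $\partial B_r$ \emph{from below} by $\frac1n\big(\frac{|\bar\nabla\varrho|}{\varrho}+(n-1)\frac{\xi'}{\xi}\big)$. The inequality goes the other way. Condition \eqref{cond-1} is a \emph{lower} bound on radial curvature, so comparison gives \eqref{hess-comp}, i.e. $\Delta_M r\le (n-1)\xi'/\xi$. Trivially also $\partial_r\log\varrho\le|\bar\nabla\varrho|/\varrho$. Hence $nH_{\rm cyl}=\Delta_M r+\partial_r\log\varrho$ is bounded \emph{above} by the expression $\frac{|\bar\nabla\varrho|}{\varrho}+(n-1)\frac{\xi'}{\xi}$ appearing in \eqref{eq_2.13}.

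A concrete instance: take $M=\mathbb{R}^n$ with $n\ge 2$, $\varrho\equiv 1$, $\xi(r)=\sinh r$. This satisfies \eqref{cond-1}--\eqref{cond-4}, and \eqref{eq_2.13} admits every $\sigma<\frac{n-1}{n}$. Yet the cylinder over $\partial B_r$ has $nH_{\rm cyl}=\frac{n-1}{r}$, which lies below $n\sigma$ for all $r>\frac{n-1}{n\sigma}$ when $\sigma>0$. So \eqref{eq_2.13} gives the cylinders no barrier property.

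\paragraph{Why it matters.} This bites exactly where you use it: the boundary gradient bound on $\partial B_R$, on which solvability of the Dirichlet problems of the exhaustion (and hence Step 4) rests. Take a stationary barrier $\widetilde u_0\pm h(d)$ with $d=R-r$; the upper sign gives the supersolution, the lower sign the subsolution.
\begin{itemize}
\item The terms of order $h'^3$ in $W^2(\partial_t-\mathcal Q)$ are $\pm n(H_{\rm cyl}\pm\sigma)\,h'^3$.
\item $h''$ enters only through $\mp h''\big(\varrho^{-2}+|\nabla\widetilde u_0|^2-\langle\nabla d,\nabla\widetilde u_0\rangle^2\big)$, whose coefficient is bounded.
\end{itemize}
So, as in Serrin's theorem, for data of arbitrary oscillation both barriers require $H_{\rm cyl}\ge|\sigma|$ along $\partial B_R$. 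In the example above this fails on every large sphere of the exhaustion. Your $R$-independent local height bounds are likewise only asserted, not derived.

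\paragraph{How the paper proceeds.} The paper gets its height bounds differently. It evolves the radially \emph{decreasing} constant mean curvature caps $v_{R(t)}$ of a rotationally symmetric model. Because $v_R'\le 0$, the inequality $\Delta_M r\le(n-1)\xi'/\xi$ is used in the direction in which it actually holds, which makes $u_+$ a supersolution (Propositions \ref{supersolution} and \ref{height-mcf}). The boundary gradient is handled by a separate barrier argument (Proposition \ref{prop_4.1}).

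To close your proof you must either supply a genuine lower bound $H_{\rm cyl}\ge|\sigma|$ on the spheres you exhaust by, or replace Step 2 with barriers and boundary estimates that use \eqref{hess-comp} only as an upper bound.

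\paragraph{Minor slips.} Your expression for $nH$ simplifies to $\varrho^{-1}{\rm div}_M(\nabla u/W)$, whereas the correct one is $\varrho^{-1}{\rm div}_M(\varrho\nabla u/W)$, cf. \eqref{H-1}. Also, $\langle N,X\rangle^{-1}$ is exactly $W$, not merely comparable to $\varrho W$.
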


    Our proof of theorem \ref{thm_0.1} relies on the foundational contributions for the geometric analysis of the mean curvature flow ($\sigma =1$) in the Euclidean space  given by K. Ecker and G. Huisken in  \cite{Ecker:89} and \cite{Ecker:91}. There, the authors prove  longtime   existence and asymptotic behavior results for the MCF of graphs with locally Lipschitz initial data. 
    
    Theorem \ref{thm_0.1} also extends previous results by P. Untenberger in  \cite{Unterberger:98} and \cite{Unterberger:03} for the particular case of the mean curvature flow in the hyperbolic space $\mathbb{H}^{n+1}$ modeled as the warped product manifold $\mathbb{H}^n \times_{\varrho} \mathbb{R}$ with $\varrho=\cosh r$. 

    In \cite{Allmann:17}, Patrick Allmann, Longzin Lin and Jingyong Zhu have introduced the notion of modified mean curvature flow for starshaped hypersurfaces (in fact, Killing graphs) in $\mathbb{H}^{n+1}$ with a given asymptotic boundary. The main results in \cite{Allmann:17} concern longtime existence and asymptotic behavior of the modified mean curvature flow in the particular case of the hyperbolic space.    
    
In \cite{Borisenko:12}, A. Borisenko and V. Miquel have obtained longtime existence results for the mean curvature flow with locally Lipschitz initial conditions in the general setting of warped product manifolds. 

   The main contribution in this paper is to extend the results in \cite{Allmann:17} and \cite{Borisenko:12} into an unified setting of modified mean curvature flow of graphs in a warped product manifold of the form $M\times_\varrho \mathbb{R}$.
 
This paper is organized as follows: In section \ref{non-par} we describe the modified mean curvature flow of Killing graphs in terms of a quasilinear parabolic equation that characterizes critical immersions for a constrained area functional. In section \ref{Hessian} we deduce some fundamental differential inequalities from a suitable version of the Hessian comparison principle whose validity in $M$ follows from condition \ref{cond-1}. These inequalities are crucial for the proof of the \emph{a priori} estimates. Height bounds are established in section \ref{height-est} using a sort of prescribed mean curvature spherical caps as barriers. Interior gradient estimates are obtained along the same lines as in \cite{Ecker:89} and \cite{Ecker:91} in section \ref{C1-est}. Boundary gradient estimates are also presented in this section. Curvature and higher derivatives bounds are obtained in section \ref{C2-est}. Finally,  the proof of the existence theorem \ref{thm_0.1} is discussed in section \ref{proof-thm} using the  \emph{a priori} estimates obtained in the preceding sections.

\section{Variational and non-parametric formulations}\label{non-par}
	
    In this section, we verify that the modified mean curvature equation \ref{MCF-mod} may be written in terms of a quasilinear parabolic partial differential equation in $M$. This allows us to use the analytical framework established in references as \cite{Ladyzhenskaya:68},  \cite{Lieberman:96} and \cite{Huisken:99} to prove longtime existence results for smooth compact domains in $M$ once we have obtained \emph{a priori} estimates for height, gradient and curvature of the evolving graphs. 
	
	In order to obtain a non-parametric description of \ref{MCF-mod}, one might assume the existence of a one-parameter family of diffeomorphisms $\psi: M \times [0, T') \to M$  and a one parameter family of functions $u: M \times [0,T')\to \mathbb{R}$ for some $T' \le T$ so that
	\begin{equation}\label{rep-flow}
	\widetilde \Psi(\psi(x,t), t) = \Phi (x, u(x,t)) =: \Psi(x, t),
	\end{equation}
	for $(x, t)\in M\times [0, T')$. A suitable choice of $\psi$ yields
	\begin{equation}\label{MCF-non-par}
		\partial_t \Psi = n(H-\sigma) N,
	\end{equation}
	a modified mean curvature flow for which the evolving  hypersurfaces $\Sigma_t$ are Killing graphs 
	\begin{equation}\label{non-par-mcf}
		\Sigma_t = \Psi_t (M) = \{\Phi(x, u(x, t)): x\in M\}, \quad t\in [0, T').
	\end{equation}
	In this description, we have
	\begin{equation}\label{eq_2.7}
		N = N|_{\Psi(\cdot, \, t)} = \frac{1}{W} (\varrho^{-2} X - \Phi_* \nabla^M u)
	\end{equation} 
	with
	\begin{equation}\label{eq_2.8}
		W = (\varrho^{-2}+|\nabla^M u|^2)^{\frac{1}{2}},
	\end{equation}
	where $\nabla^M$ denotes the Riemannian connection in $(M, g)$. 

    Considering the non-parametric formulation of the modified mean curvature flow in \eqref{non-par-mcf}, one easily verifies that the induced metric and volume element in $\Sigma_t$, $t\in [0, T')$,  are respectively  given by
	\begin{equation}\label{eq_3.1}
		g+\varrho^2 {\rm d}u \otimes {\rm d}u
	\end{equation}
	and
	\begin{equation}\label{eq_3.2}
		{\rm d} \Sigma_t=\varrho\sqrt{\varrho^{-2}+|\nabla^M u|^2}\, {\rm d}M.
	\end{equation}
	Given a domain $\Omega \subset M$ and a constant $\sigma$ 
	we define the constrained area functional 
	\begin{equation}\label{eq_3.3}
		\mathcal{A}_\sigma\left[u\right]=\int_{\Omega}\varrho\sqrt{\varrho^{-2}+|\nabla^M u|^2}\, {\rm d}M+n\sigma\mathcal{V}\left[u\right],
	\end{equation}
	where the volume functional $\mathcal{V}$ is defined by
	\begin{equation}\label{eq_3.4}
		\mathcal{V}\left[u\right]=\int_\Omega\int_0^{u(x)\varrho(x)}  {\rm d}M=\int_\Omega \varrho u\, {\rm d}M.
	\end{equation}
	For an arbitrary compactly supported function $v\in C_0^\infty(\Omega)$ we have
	\[
	\frac{{\rm d}}{{\rm d} t}\Big|_{t=0}\mathcal{A}_\sigma\left[u+t v\right]\text{\small $=-\int_\Omega\left(\operatorname{div}_M\left(\frac{\nabla^M u}{W}\right)+\left\langle\nabla^M\log\varrho,\frac{\nabla^M u}{W}\right\rangle -n\sigma\right)v\varrho\, {\rm d}M,$}
	\]
	where the differential operators $\nabla^M$ and $\operatorname{div}_M$ are taken with respect to the metric $g$ in $M$. Then the Euler-Lagrange equation of the functional $\mathcal{A}_\sigma$ is
	\begin{equation}\label{H-1}
		n(H-\sigma) = \operatorname{div}_M\left(\frac{\nabla^M u}{W}\right)+\left\langle\nabla^M\log\varrho,\frac{\nabla^M u}{W}\right\rangle -n\sigma =0,
	\end{equation}
	where $H$ is the mean curvature of the Killing graph of $u$. However, differentiating the right-hand side of \eqref{rep-flow} with respect to $t$ in  yields
	\[
	\partial_t \Psi = \partial_t u \, X.
	\]
	Hence, \eqref{H-1} is equivalent to 
	\begin{equation}\label{eq_3.6}
		\partial_t u \, X = \left( \operatorname{div}_M\left(\frac{\nabla^M u}{W}\right)+\left\langle\nabla^M\log\varrho,\frac{\nabla^M u}{W}\right\rangle -n\sigma\right) N.
	\end{equation}
	Taking the coefficient of normal projection on both sides yields
	\begin{equation}\label{eq_3.7}
	\partial_t u \langle X, N\rangle  = \operatorname{div}_M\left(\frac{\nabla^M u}{W}\right)+\left\langle\nabla^M\log\varrho,\frac{\nabla^M u}{W}\right\rangle -n\sigma.
	\end{equation}
	Since $\langle X, N\rangle = 1/W$ we conclude that the map $\Psi$ in \eqref{rep-flow} defines a modified mean curvature flow if and only
	if $u(\cdot, \, t)$ satisfies the parabolic equation
	\begin{equation}\label{eq_3.8}
		\partial_t u =  \mathcal{Q}[u],
	\end{equation}
	where
	\begin{equation}\label{eq_3.9}
		\mathcal{Q}[u]=W\left(\operatorname{div}_M\left(\frac{\nabla^M u}{W}\right)+\left\langle\nabla^M\log\varrho,\frac{\nabla^M u}{W}\right\rangle-n\sigma\right).
	\end{equation}
	In general, this non-parametric formulation is equivalent to the modified mean curvature flow \eqref{MCF-mod} up to tangential diffeomorphisms of the evolving graphs $\Sigma_t$, $t\in [0, T')$. The equivalence here follows from the fact that we are assuming a fixed \emph{gauge}, namely the choice of coordinates fixed in \eqref{rep-flow}.

	\section{Hessian comparison principle and fundamental inequalities}\label{Hessian}
	
	Our assumption \eqref{cond-1} about radial sectional curvatures in $M$ along  radial geodesics issuing from the pole $o$ implies the validity of  the Hessian comparison theorem  in the following version
	\begin{equation}\label{hess-comp}
		\nabla^M \nabla^M r \le \frac{\xi'(r)}{\xi(r)} \left(g-{\rm d}r\otimes {\rm d}r\right).
	\end{equation}
    We refer the reader to \cite{Pigola:05} for a proof of this fact.

	 Now we deduce from \eqref{hess-comp} some differential inequalities that will be useful to prove \emph{a priori} estimates in the subsequent sections. One of the relevant functions in those inequalities is
     \begin{equation}
		\label{bar-zeta}
		\bar\zeta(r) = \int^{r}_0 \xi(\varsigma)\, {\rm d}\varsigma.
	 \end{equation}
     In the sequel, this function plays the role of the quadratic function $x\mapsto r^2 = | x|^2$ in \cite{Ecker:89} and \cite{Ecker:91}.
     
	\begin{proposition}\label{hess-r} Suppose that \eqref{MCF-mod} holds. The restrictions of the functions $r$ and $s$ to the hypersurfaces $\Sigma_t= \widetilde\Psi_t (M)$, $t\in [0, T)$, satisfy
		\begin{equation}\label{eq_3.10}
			\begin{split}
				(\partial_t -\Delta) r &\ge -\frac{\xi'(r)}{\xi(r)} \left(n- |\nabla r|^2\right)- \varrho^2|\nabla s|^2 \left(\langle\bar\nabla\log\varrho, \nabla r\rangle -\frac{\xi'(r)}{\xi(r)}\right)
                \\
                &-n\sigma\langle\bar\nabla r,N\rangle
			\end{split}
		\end{equation}
		and
		\begin{equation}
			\label{eq_3.11}
			(\partial_t - \Delta )s = -(2\langle\bar{\nabla}\log\varrho,N\rangle+n\sigma)\langle\bar{\nabla}s,N\rangle.
		\end{equation}
		In both expressions, $\nabla$ and $\Delta$ are the intrinsic Riemannian connection and Laplacian in $\Sigma_t$, respectively, whereas $\bar\nabla$ denotes the Riemannian connection in $\bar M$. Moreover, given the function
		\begin{equation}
			\label{eq_3.12}
			\zeta(\widetilde\Psi(x, t)) = \int_0^{r(\widetilde\Psi(t,x))} \xi(\varsigma)\, {\rm d}\varsigma,
		\end{equation}
		that is, $\zeta = \bar\zeta (r\circ\widetilde\Psi)$, we have
		\begin{equation}
			\label{eq_3.13}
			\begin{split}
				(\partial_t -\Delta) \zeta &\ge -n\xi'(r) - \varrho^2|\nabla s|^2 \xi(r) \left(\langle\bar\nabla\log\varrho, \nabla r\rangle -\frac{\xi'(r)}{\xi(r)}\right)
                \\
                &-n\sigma\xi(r)\langle\bar\nabla r,N\rangle.\end{split}
		\end{equation}
	\end{proposition}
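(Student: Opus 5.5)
The starting point is the standard identity for a function $f$ on $\bar M$ restricted to a hypersurface moving by \eqref{MCF-mod}. Tangential reparametrizations do not change the geometric quantities, so I can use the normal velocity $n(H-\sigma)N$ and get
\begin{equation}
(\partial_t-\Delta) f = n(H-\sigma)\langle\bar\nabla f,N\rangle - \big(\mathrm{tr}_{\Sigma_t}\bar\nabla^2 f + nH\langle\bar\nabla f,N\rangle\big)
= -\mathrm{tr}_{\Sigma_t}\bar\nabla^2 f - n\sigma\langle \bar\nabla f,N\rangle .
\end{equation}
Here I use $\Delta f = \mathrm{tr}_{\Sigma_t}\bar\nabla^2 f + nH\langle\bar\nabla f, N\rangle$, with the sign convention for $H$ fixed by \eqref{H-1}. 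Everything therefore reduces to computing, or bounding from above, the trace of $\bar\nabla^2 f$ over $T\Sigma_t$ for $f=r$ and $f=s$. The inequality \eqref{eq_3.13} then follows from the chain rule: $(\partial_t-\Delta)\zeta = \xi(r)(\partial_t-\Delta)r - \xi'(r)|\nabla r|^2$. Multiplying \eqref{eq_3.10} by $\xi>0$ produces $-\xi'(n-|\nabla r|^2)-\xi'|\nabla r|^2 = -n\xi'$, which is exactly the first term of \eqref{eq_3.13}.

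For the Hessians in the warped metric $\bar g=\varrho^2 {\rm d}s^2+g$ I would use the Koszul formulas. For functions $f$ on $M$, extended independently of $s$:
\begin{equation}
\bar\nabla^2 f(V,W)=\nabla^M\nabla^M f(V^M,W^M) - \varrho\langle\nabla^M\varrho,\nabla^M f\rangle\, {\rm d}s(V){\rm d}s(W).
\end{equation}
This holds because $\bar\nabla_X X = -\varrho\nabla^M\varrho$, while the mixed terms vanish since $\bar\nabla_X Y$ is proportional to $X$ for horizontal $Y$. For $s$: $\bar\nabla s = \varrho^{-2}X$, and using $\bar\nabla_Y X = (Y\log\varrho)X$ one gets
\begin{equation}
\bar\nabla^2 s(V,W) = -\big({\rm d}\log\varrho(V){\rm d}s(W)+{\rm d}\log\varrho(W){\rm d}s(V)\big)\varrho^2\cdot\varrho^{-2}\cdot\varrho^2 \ \text{(to be normalised)},
\end{equation}
that is, $\bar\nabla^2 s = -({\rm d}\log\varrho\otimes {\rm d}s + {\rm d}s\otimes {\rm d}\log\varrho)$.

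Next I take traces over an orthonormal frame $\{e_i\}$ of $T\Sigma_t$, using $\sum_i e_i\otimes e_i = \bar g^{-1} - N\otimes N$. For $s$, this gives $\mathrm{tr}\,\bar\nabla^2 s = -2\langle\bar\nabla\log\varrho,\bar\nabla s\rangle + 2\langle\bar\nabla\log\varrho,N\rangle\langle\bar\nabla s,N\rangle$. The first inner product vanishes because $\bar\nabla s\parallel X$ and $\varrho$ is $X$-invariant. Hence \eqref{eq_3.11} holds with equality.

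For $r$, I would insert the Hessian comparison \eqref{hess-comp}. The horizontal parts give
\[
\sum_i\nabla^M\nabla^M r(e_i^M,e_i^M)\le \frac{\xi'}{\xi}\sum_i\big(|e_i^M|^2-{\rm d}r(e_i)^2\big).
\]
Since $|e_i|^2=1=|e_i^M|^2+\varrho^2 {\rm d}s(e_i)^2$, we have $\sum_i|e_i^M|^2 = n-\varrho^2|\nabla s|^2$. Also $\sum_i {\rm d}r(e_i)^2=|\nabla r|^2$ and $\sum_i {\rm d}s(e_i)^2=|\nabla s|^2$. The vertical term contributes $-\varrho^2|\nabla s|^2\langle\bar\nabla\log\varrho,\bar\nabla r\rangle$. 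Putting these together,
\[
\mathrm{tr}\,\bar\nabla^2 r\le \frac{\xi'}{\xi}(n-|\nabla r|^2) - \varrho^2|\nabla s|^2\Big(\frac{\xi'}{\xi}-\langle\bar\nabla\log\varrho,\bar\nabla r\rangle\Big).
\]
Substituting this into the evolution identity gives \eqref{eq_3.10}. Note that $\langle\bar\nabla\log\varrho,\bar\nabla r\rangle$ may be written as $\langle\bar\nabla\log\varrho,\nabla r\rangle$ in the paper's notation, since both functions are $s$-independent and the normal components are accounted for.

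The main obstacle is the bookkeeping in these last steps. I need to keep consistent conventions for the sign of $H$ relative to $N$ as in \eqref{eq_2.7}, and I need to split correctly into horizontal and vertical components the trace restricted to $\Sigma_t$ rather than to $\bar M$. This is where the $\varrho^2|\nabla s|^2$ factors arise. I would double-check the result against the Euclidean case $\varrho=1$, $\xi=r$.
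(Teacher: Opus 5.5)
Your route is the paper's: reduce to $(\partial_t-\Delta)f=-\mathrm{tr}_{\Sigma_t}\bar\nabla^2 f-n\sigma\langle\bar\nabla f,N\rangle$ (the paper does this term by term in \eqref{eq_3.14}, \eqref{eq_3.15}, \eqref{eq_3.18}, \eqref{eq_3.23}). You then split $\bar\nabla^2 r$ into a horizontal part controlled by \eqref{hess-comp} and a vertical part, and obtain \eqref{eq_3.13} from the chain rule. The following pieces are correct: your evolution identity (with $\vec H=nHN$ for the normal \eqref{eq_2.7}), the final formula for $\bar\nabla^2 s$ and hence \eqref{eq_3.11}, the horizontal trace $n-\varrho^2|\nabla s|^2-|\nabla r|^2$, and the chain-rule step.

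However, the vertical part of your Hessian formula has the wrong sign. From the fact you quote, $\bar\nabla_X X=-\varrho\nabla^M\varrho$, one gets
\[
\bar\nabla^2 f(X,X)=X(Xf)-{\rm d}f(\bar\nabla_X X)=\varrho\langle\nabla^M\varrho,\nabla^M f\rangle ,
\]
so $\bar\nabla^2 f=\nabla^M\nabla^M f+\varrho\langle\nabla^M\varrho,\nabla^M f\rangle\,{\rm d}s\otimes{\rm d}s$. This agrees with \eqref{eq_3.17}. Polar coordinates on $\mathbb{R}^2$, where $\varrho=r$ and $\bar\nabla^2 r(\partial_\theta,\partial_\theta)=r>0$, confirm the sign. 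The vertical contribution to the trace is therefore $+\varrho^2|\nabla s|^2\langle\bar\nabla\log\varrho,\bar\nabla r\rangle$, not $-$.

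Your displayed bound for $\mathrm{tr}\,\bar\nabla^2 r$ does carry this $+$ sign, so it is the right bound. But it does not follow from the two lines before it. Combining your stated horizontal and vertical pieces consistently gives $-\varrho^2|\nabla s|^2\big(\frac{\xi'}{\xi}+\langle\bar\nabla\log\varrho,\bar\nabla r\rangle\big)$, and hence a wrong version of \eqref{eq_3.10}. Fixing the sign in the Hessian repairs the argument.

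Second, drop the claim that $\langle\bar\nabla\log\varrho,\bar\nabla r\rangle$ may be rewritten as $\langle\bar\nabla\log\varrho,\nabla r\rangle$. Since $\nabla r=\bar\nabla r-\langle\bar\nabla r,N\rangle N$, the two differ by $\langle\bar\nabla\log\varrho,N\rangle\langle\bar\nabla r,N\rangle$, which has no reason to vanish. What the computation produces, in your argument and in the paper's \eqref{eq_3.17}, is the ambient quantity $\langle\bar\nabla\log\varrho,\bar\nabla r\rangle=\partial_r\log\varrho$. This is exactly what is later controlled by \eqref{cond-3} in the form \eqref{eq_3.109}. The paper itself passes from $\bar\nabla r$ in \eqref{eq_3.17} to $\nabla r$ in \eqref{eq_3.18} without comment. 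So the $\nabla r$ in that slot of \eqref{eq_3.10} and \eqref{eq_3.13} should be read as $\bar\nabla r$. Keep it that way rather than justifying the switch by an identity that does not hold.
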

	
	\begin{proof} 
		Observe that  $\bar \nabla s = \varrho^{-2}X$ and $\nabla s = \varrho^{-2}X^\top$, where $\top$ denotes the tangential projection onto $T\Sigma_t$. Given a local orthonormal tangent frame $\{{\sf e}_i\}_{i=1}^n$ in $\Sigma_t$, one has
		\begin{equation}\label{eq_3.14}
			\begin{split}
				\Delta s = 2\langle\bar{\nabla}\log\varrho,N\rangle\langle \bar\nabla s, N\rangle+nH\langle\bar{\nabla}s,N\rangle.
			\end{split}
		\end{equation}
		We also compute
		\begin{equation}\label{eq_3.15}
			\partial_t s=\langle\bar{\nabla}s, \partial_t \widetilde\Psi\rangle=n(H-\sigma)\langle\bar{\nabla}s,N\rangle.
		\end{equation}
		The Hessian of the radial coordinate satisfies
		\begin{equation}\label{eq_3.17}
			\langle \bar\nabla_X \bar\nabla r, X\rangle = \langle \bar\nabla_{\bar\nabla r} X, X\rangle = \frac{1}{2}\partial_r |X|^2 = \frac{1}{2}\partial_r \varrho^2  =
		\varrho \langle\bar\nabla\varrho, \bar\nabla r\rangle.
		\end{equation}
		Fixed a local orthonormal tangent frame $\{{\sf e}_i\}_{i=1}^n$ in $\Sigma_t$, we have
		\begin{equation}\label{eq_3.18}
                \Delta r = \sum_i\langle\nabla_{{\sf e}_i}\nabla r,{\sf e}_i\rangle =  \sum_i \langle\nabla^M_{\pi_*{\sf e}_i}\nabla^M r,\pi_* {\sf e}_i\rangle+ |\nabla s|^2 \langle\varrho\bar\nabla\varrho, \nabla r\rangle+nH\langle\bar\nabla r,N\rangle
		\end{equation}
		where $\pi: \bar M = M \times \mathbb{R} \to M$ is the projection on the first factor, that is, $\pi(s,x) =x$ for all $(s,x)\in M\times \mathbb{R}$. The Hessian comparison theorem \eqref{hess-comp} implies that
		\begin{equation}\label{eq_3.19}
			\begin{split}
				\Delta r  \le \frac{\xi'(r)}{\xi(r)} \left(n- \varrho^2|\nabla s|^2 - |\nabla r|^2\right)+ \varrho^2|\nabla s|^2 \langle\bar\nabla\log\varrho, \nabla r\rangle+nH\langle\bar\nabla r,N\rangle.
			\end{split}
		\end{equation}
		Hence,
		\begin{equation}\label{eq_3.20}
			\Delta r \le  \frac{\xi'(r)}{\xi(r)} \left(n-  |\nabla r|^2\right)+ \varrho^2|\nabla s|^2\left( \langle\bar\nabla\log\varrho, \nabla r\rangle- \frac{\xi'(r)}{\xi(r)}\right)+nH\langle\bar\nabla r,N\rangle.
		\end{equation}
		We also have $\nabla \zeta = \xi(r)\nabla r$ and 
		\begin{equation}\label{eq_3.21}
			\begin{split}
				\Delta \zeta&= \xi(r) \Delta r +\xi'(r) |\nabla r|^2. 
			\end{split}
		\end{equation}
		Therefore
		\begin{equation}\label{eq_3.22}
			\begin{split}
				&  \Delta \zeta \le n\xi'(r) + \varrho^2|\nabla s|^2 \xi(r)\left( \langle\bar\nabla\log\varrho, \nabla r\rangle- \frac{\xi'(r)}{\xi(r)}\right)+nH\xi(r)\langle\bar\nabla r,N\rangle.
			\end{split}
		\end{equation}
		On the other hand,
		\begin{equation}\label{eq_3.23}
			\partial_t r = \big\langle \bar\nabla r, \partial_t\widetilde\Psi\big\rangle = n(H-\sigma) \langle \bar\nabla r, N\rangle
		\end{equation}
		and
		\begin{equation}\label{eq_3.24}
			\partial_t \zeta = n(H-\sigma) \xi(r)\langle \bar\nabla r, N\rangle.
		\end{equation}
		We conclude that 
		\begin{equation}\label{eq_3.25}
			\begin{split}
				(\partial_t -\Delta) r \ge -\frac{\xi'(r)}{\xi(r)} \left(n- |\nabla r|^2\right)- \varrho^2|\nabla s|^2 \left(\langle\bar\nabla\log\varrho, \nabla r\rangle -\frac{\xi'(r)}{\xi(r)}\right)-n\sigma\langle\bar\nabla r,N\rangle
			\end{split}
		\end{equation}
		and
		\begin{equation}\label{eq_3.26}
			\begin{split}
				(\partial_t -\Delta) \zeta \ge -n\xi'(r) - \varrho^2|\nabla s|^2 \xi(r) \left(\langle\bar\nabla\log\varrho, \nabla r\rangle -\frac{\xi'(r)}{\xi(r)}\right)-n\sigma\xi(r)\langle\bar\nabla r,N\rangle.
			\end{split}
		\end{equation}
		This finishes the proof of the proposition.
	\end{proof}

The next two applications of the Hessian comparison theorem provide $C^1$ bounds for \eqref{MCF-mod}. 

These propositions are of independent interest since they provide suitable adaptations of some gradient estimates in \cite{Korevaar:86}, \cite{Ecker:89} and \cite{Ecker:91} to the more general context of modified mean curvature flow in Riemannian warped products.
    \begin{proposition}\label{evolutionW} If the hypersurfaces $\Sigma_t$, $t\in [0, T)$, evolve by the modified mean curvature flow \eqref{MCF-mod}, then
		\begin{equation}\label{eq_3.27}
			\left(\partial_t-\Delta\right)W=-W(|A|^2+\overline{\Ric}(N,N))-2W^{-1}|\nabla W|^2,
		\end{equation}
		where $W = \langle X, N\rangle^{-1}$ {\rm(}note that $W=(\varrho^{-2}+|\nabla^M u)|^2)^{1/2}$ whenever $\Sigma_t$ is a Killing graph{\rm)} and $A$ is the Weingarten map of $\Sigma_t$. If the ambient Ricci tensor satisfies $\overline{\rm Ric} \ge - L$ for some non-negative constant $L$ then
		\[
		(\partial_t-\Delta) (e^{-Lt} W) \le 0.
		\]
		In particular, the parabolic maximum principle implies in this case that
		\begin{equation}\label{eq_3.28}
			{\sup}_{B_R(o) \times [0, T]}W (x,t) \le  e^{LT} \big({\sup}_{B_R(o)} W(\cdot,  0) + {\sup}_{\partial B_R(o) \times (0, T]} W\big).
		\end{equation}
	\end{proposition}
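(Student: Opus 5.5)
We will work with the angle function $\Theta = \langle X, N\rangle = W^{-1}$ instead of $W$ itself, first derive its evolution equation, and then convert to $W$ using the chain rule. The two ingredients are the Killing property of $X$ and the Codazzi equation.

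\textbf{Step 1: the Laplacian of $\Theta$.} Because $X$ is Killing, $\bar\nabla X$ is skew-symmetric. The standard computation for Killing fields (used for the classical Jacobi-field identity for the angle function) gives
\[
\Delta \Theta = -\big(|A|^2 + \overline{\Ric}(N,N)\big)\Theta - n\langle \nabla H, X^\top\rangle .
\]
We take the sign convention under which $n\langle \nabla H, X^\top\rangle$ is the derivative of the mean curvature along the flow generated by $X$. To verify it in a local orthonormal frame $\{{\sf e}_i\}$ that is geodesic at a point, we write
\[
{\sf e}_i\Theta = \langle \bar\nabla_{{\sf e}_i}X, N\rangle + \langle X, \bar\nabla_{{\sf e}_i}N\rangle .
\]
Differentiating once more, we use the Killing identity $\bar\nabla^2 X(Y,Z) = \bar R(Y,X)Z$ to produce the Ricci term. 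The Codazzi equation turns $\sum_i (\nabla_{{\sf e}_i}A){\sf e}_i$ into $\nabla(nH)$ plus a curvature term $\overline{\Ric}(N,\cdot)^\top$. The latter cancels against the tangential part of the Ricci term coming from $\bar R$. Skew-symmetry kills the mixed terms $\langle \bar\nabla_{{\sf e}_i}X, A{\sf e}_i\rangle$, since $A$ is symmetric.

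\textbf{Step 2: the time derivative of $\Theta$.} Along \eqref{MCF-mod}, up to a tangential diffeomorphism, the normal moves by $\partial_t N = -\nabla\big(n(H-\sigma)\big) = -n\nabla H$, because $\sigma$ is constant. The velocity $n(H-\sigma)N$ contributes
\[
\langle \bar\nabla_{N}X, N\rangle\, n(H-\sigma) = 0
\]
by skew-symmetry. Hence $\partial_t\Theta = -n\langle \nabla H, X^\top\rangle$. Any tangential reparametrization adds a term of the form $\langle \nabla\Theta, V\rangle$, which we absorb by working in the gauge in which $\partial_t\widetilde\Psi$ is normal. The $\nabla H$ terms in Steps 1 and 2 then cancel, and we obtain
\[
(\partial_t-\Delta)\Theta = \big(|A|^2 + \overline{\Ric}(N,N)\big)\Theta .
\]
Getting the signs in this cancellation right is the step we expect to be the main obstacle. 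The constant $\sigma$ drops out precisely because it is constant.

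\textbf{Step 3: conversion to $W$ and the maximum principle.} For $W = \Theta^{-1}$ we have
\[
\partial_t W = -\Theta^{-2}\partial_t\Theta, \qquad \Delta W = -\Theta^{-2}\Delta\Theta + 2\Theta^{-3}|\nabla\Theta|^2,
\]
and $|\nabla\Theta|^2 = \Theta^4|\nabla W|^2$. Substituting gives \eqref{eq_3.27}.

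If $\overline{\Ric}\ge -L$, then
\[
(\partial_t-\Delta)W \le LW - 2W^{-1}|\nabla W|^2 \le LW,
\]
and therefore $(\partial_t-\Delta)(e^{-Lt}W)\le 0$. The weak parabolic maximum principle on the compact cylinder $\overline{B_R(o)}\times[0,T]$ then bounds $e^{-Lt}W$ by its supremum on the parabolic boundary. That supremum is at most the sum of the initial and lateral suprema, since $W>0$. Multiplying by $e^{LT}$ yields \eqref{eq_3.28}.
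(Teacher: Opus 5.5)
Your proposal is correct and follows the same route as the paper. Both arguments use the identity $\Delta\langle X,N\rangle+(|A|^2+\overline{\Ric}(N,N))\langle X,N\rangle=-n\langle\nabla H,X^\top\rangle$ together with $\partial_t\langle X,N\rangle=-n\langle X^\top,\nabla H\rangle$, then cancel the $\nabla H$ terms, convert via $W=\langle X,N\rangle^{-1}$, and apply the maximum principle to $e^{-Lt}W$. The only difference is that you derive the first identity directly from the Killing identity and Codazzi, where the paper quotes it from the second variation formula; your signs there, with $A=-\bar\nabla N$ and $nH=\operatorname{tr}A$, check out.
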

	\begin{proof} Note that
		\begin{equation}\label{eq_3.29}
			\nabla\langle X,N\rangle=\langle X,N\rangle (\bar\nabla\log\varrho)^\top-\langle\bar\nabla\log\varrho,N\rangle X^\top-AX^\top.
		\end{equation}
		Hence,
		\begin{equation}\label{eq_3.30}
			\begin{split}
				& \langle X,N\rangle (\bar\nabla\log\varrho)^\top-\langle\bar\nabla\log\varrho,N\rangle X^\top = \langle X, N\rangle \bar\nabla\log\varrho - \langle\bar\nabla\log\varrho, N\rangle X.
			\end{split}
		\end{equation}
		It follows from the second variation formula for the functional $\mathcal{A}_0$ that 
		\begin{equation}\label{eq_3.31}
			\Delta\langle X,N\rangle+|A|^2\langle X,N\rangle+\overline{\Ric}(N,N)\langle X,N\rangle=-n\langle\nabla H,X^\top\rangle,
		\end{equation}
		where $\top$ denotes the tangent projection onto $T\Sigma_t$. 
		On the other hand, using that $X$ is a Killing vector field one gets
		\begin{equation}\label{eq_3.32}
			\begin{split}
				\partial_t\langle X,N\rangle &= n(H-\sigma)\langle \bar\nabla_N X, N\rangle - n \langle X, \nabla (H-\sigma)\rangle=-n\langle X^\top,\nabla H\rangle,
			\end{split}
		\end{equation}
		where $\bar{\nabla}$ denotes the Riemannian connection in $\bar{M}$ and so
		\begin{equation}\label{eq_3.33}
			\left(\partial_t-\Delta\right)\langle X,N\rangle=|A|^2\langle X,N\rangle+\overline{\Ric}(N,N)\langle X,N\rangle.
		\end{equation}
		Thus,  using that  $\langle X,N\rangle=1/W$ one has
		\begin{equation}\label{eq_3.34}
			\partial_tW = -W^2 \partial_t W^{-1}=-W^2\partial_t\langle X,N\rangle
		\end{equation}
		and
		\begin{equation}\label{eq_3.35}
			\Delta W-\frac{2}{W}|\nabla W|^2 =-W^2 \Delta W^{-1} = -W^2 \Delta\langle X, N\rangle.
		\end{equation}
		Hence, one concludes that
		\begin{equation}\label{eq_3.36}
			\partial_t W-\Delta W+\frac{2}{W}|\nabla W|^2
			=-W^2\left(\partial_t-\Delta\right)\<X,N\>
			=-W(|A|^2+\overline{\Ric}\left(N,N)\right)
		\end{equation}
		A direct application of the parabolic maximum principle \cite{Mantegazza:11} finishes the proof of the proposition.
	\end{proof}
	
	\vspace{3mm}
	
	The following proposition is analog to Theorem 2.1 in \cite{Ecker:89} and provides a localized version of the gradient bound in \eqref{eq_3.28}.
	
	\begin{proposition} Suppose that the ambient Ricci tensor satisfies $\overline{\rm Ric} \ge - L$ for some non-negative constant $L$. Given $R>0$, denote
		\begin{equation}\label{eq_3.37}
			C_R = {\rm sup}_{B_R(o)} \left( n \xi'(r)+\partial_r \log \varrho + n\sigma \xi(r)\right).
		\end{equation}
		If the hypersurfaces $\Sigma_t$, $t\in [0, T)$, evolve by the modified mean curvature flow \eqref{MCF-mod}, then the function $W = \langle X, N\rangle^{-1}$  satisfies
		\begin{equation}\label{eq_3.38}
			e^{-Lt} (\bar\zeta(R) - \zeta(\widetilde \Psi(x,t)) - C_Rt)W(x,t) \le  \bar\zeta(R) \, {\sup}_{M} W(\cdot, 0) 
		\end{equation}
		whenever 
		\begin{equation}\label{eq_3.39}
			\zeta (\widetilde\Psi(x,t)) \le \bar\zeta(R) - C_R T.
		\end{equation}
	\end{proposition}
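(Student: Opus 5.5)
This follows the localization argument of Ecker--Huisken (Theorem 2.1 in \cite{Ecker:89}). Introduce the cutoff
\[
\eta(x,t) = \bar\zeta(R) - \zeta(\widetilde\Psi(x,t)) - C_R t .
\]
Set $\tilde W = e^{-Lt}W$. By Proposition \ref{evolutionW},
\[
(\partial_t-\Delta)\tilde W \le -2W^{-1}\langle\nabla W,\nabla \tilde W\rangle .
\]
The plan is to show that $\eta$ is a supersolution, $(\partial_t-\Delta)\eta\le 0$, on the region where $\zeta\le\bar\zeta(R)$, i.e.\ $r\le R$. Then we apply the maximum principle to the product $\eta\tilde W$ on the set where $\eta>0$. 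On the parabolic boundary of that set, either $\eta=0$, or $t=0$, where $\eta\tilde W\le \bar\zeta(R)\sup_M W(\cdot,0)$ because $\zeta\ge0$. Condition \eqref{eq_3.39} is only needed to guarantee that the region of interest stays inside $\{\eta\ge0\}$ for all $t\le T$.

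\textbf{Step 1: $\eta$ is a supersolution.} By \eqref{eq_3.13},
\[
(\partial_t-\Delta)\eta = -(\partial_t-\Delta)\zeta - C_R \le n\xi'(r) + \varrho^2|\nabla s|^2\xi(r)\Big(\langle\bar\nabla\log\varrho,\nabla r\rangle - \tfrac{\xi'}{\xi}\Big) + n\sigma\xi(r)\langle\bar\nabla r,N\rangle - C_R .
\]
We bound the middle term as follows. We have $\varrho^2|\nabla s|^2 = |X^\top|^2/\varrho^2\le 1$. Also $|\langle\bar\nabla\log\varrho,\nabla r\rangle|\le|\partial_r\log\varrho|$, since $\bar\nabla\log\varrho$ is horizontal and the component of $\nabla r$ along it is controlled by the radial derivative; this is the step that needs checking. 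Then \eqref{cond-3} makes the bracket nonpositive, or in any case bounded by $\partial_r\log\varrho$ times $\xi\le$ (absorbed into $C_R$). For the last term, $|\langle\bar\nabla r,N\rangle|\le1$ gives the bound $n|\sigma|\xi$.

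Hence $(\partial_t-\Delta)\eta\le0$ for $r\le R$, provided $C_R$ dominates these terms. The definition \eqref{eq_3.37} is tailored for this, possibly after interpreting $\sigma$ as $|\sigma|$. I expect this matching of the terms against $C_R$ to be the main bookkeeping obstacle.

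\textbf{Step 2: evolution of the product.} Compute
\[
(\partial_t-\Delta)(\eta\tilde W) = \tilde W(\partial_t-\Delta)\eta + \eta(\partial_t-\Delta)\tilde W - 2\langle\nabla\eta,\nabla\tilde W\rangle .
\]
Using the inequality for $\tilde W$ and writing $\nabla\tilde W = \tilde W\,\nabla W/W$, the gradient terms combine as in \cite{Ecker:89} into
\[
-2W^{-1}\langle\nabla W,\nabla(\eta\tilde W)\rangle .
\]
More precisely,
\[
-2\eta W^{-1}\langle\nabla W,\nabla\tilde W\rangle - 2\langle\nabla\eta,\nabla\tilde W\rangle = -2W^{-1}\langle\nabla W,\nabla(\eta\tilde W)\rangle
\]
exactly, because $\tilde W\nabla W/W=\nabla\tilde W$. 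Therefore
\[
(\partial_t-\Delta)(\eta\tilde W)\le -2W^{-1}\langle\nabla W,\nabla(\eta\tilde W)\rangle
\]
wherever $\eta\ge0$.

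\textbf{Step 3: maximum principle.} The set $\{\eta>0\}$ has compact spatial slices, since $r<R$ there and graphs over $B_R$ are compact. At an interior first-time maximum of $\eta\tilde W$ above the initial bound we get a contradiction from the inequality in Step 2. This yields \eqref{eq_3.38} at all points where $\eta\ge0$, and \eqref{eq_3.39} with $t\le T$ ensures exactly that.
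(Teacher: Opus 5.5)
Your strategy is the paper's: an Ecker--Huisken cutoff built from $\zeta$, multiplied against $e^{-Lt}W$, followed by the maximum principle on $\{\eta>0\}$. Steps 2 and 3 are correct and a little more economical than the paper's proof. The paper uses the squared cutoff $e^{-2Lt}(\bar\zeta(R)-\zeta-C_Rt)^2$ against $W^2$, and needs Young's inequality together with $\phi\phi''/\phi'^2=\tfrac12$ to close. With your linear cutoff, the identity $-2\eta W^{-1}\langle\nabla W,\nabla\tilde W\rangle-2\langle\nabla\eta,\nabla\tilde W\rangle=-2W^{-1}\langle\nabla W,\nabla(\eta\tilde W)\rangle$ absorbs the cross term exactly.

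The step you flagged in Step 1, however, is false as stated: $|\langle\bar\nabla\log\varrho,\nabla r\rangle|\le|\partial_r\log\varrho|$ fails unless $\varrho$ is radial. Since $\nabla r=\bar\nabla r-\langle\bar\nabla r,N\rangle N$,
\[
\langle\bar\nabla\log\varrho,\nabla r\rangle=\partial_r\log\varrho-\langle\bar\nabla r,N\rangle\langle\bar\nabla\log\varrho,N\rangle .
\]
The last term involves the component of $\nabla^M\log\varrho$ tangent to the geodesic spheres, which \eqref{cond-3} does not control. For instance, suppose $\partial_r\log\varrho=0$ at a point and $\nabla^M u$ has large, equal-size radial and angular components, the angular one along $\nabla^M\log\varrho$. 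Then this term is close to $\pm\frac12|\nabla^M\log\varrho|$. So \eqref{eq_3.13} as printed does not make the bracket nonpositive, and the angular part cannot be absorbed into \eqref{eq_3.37} either.

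The repair is to recompute that term from \eqref{eq_3.17}. The mixed horizontal--vertical entries of $\bar\nabla^2 r$ vanish. The vertical parts $\varrho^{-2}\langle{\sf e}_i,X\rangle X$ of the frame contribute $\sum_i\varrho^{-4}\langle{\sf e}_i,X\rangle^2\,\varrho\langle\bar\nabla\varrho,\bar\nabla r\rangle=\varrho^2|\nabla s|^2\,\partial_r\log\varrho$, with the ambient gradient $\bar\nabla r$. So the $\nabla r$ paired with $\bar\nabla\log\varrho$ in \eqref{eq_3.18}, and hence in \eqref{eq_3.13}, should be $\bar\nabla r$. The bracket is then $\partial_r\log\varrho-\xi'/\xi\le0$ by \eqref{cond-3}, the term can be discarded, and $(\partial_t-\Delta)\eta\le n\xi'+n|\sigma|\xi-C_R$ on $\{r\le R\}$.

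This is nonpositive once $C_R=n\sup_{B_R(o)}(\xi'+|\sigma|\xi)$, as in \eqref{eq_3.111}. Your suspicion about $|\sigma|$ is right: for $\sigma<0$ the constant \eqref{eq_3.37} need not suffice. For $\varrho\equiv1$ and $\xi=r$ it equals $n$, while $n\xi'+n\sigma\xi\langle\bar\nabla r,N\rangle>n$ wherever $\partial_r u>0$. The corrected choice also gives $C_R\ge n>0$, which your Step 3 needs in order to have $\{\eta>0\}\subset\{r<R\}$.

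The paper's proof passes over this same point by simply citing \eqref{eq_3.13}. With these two corrections your argument is complete.
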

	\begin{proof} Given $R>0$, let 
		\begin{equation}\label{eq_3.40}
			\phi (\varsigma) = (\bar\zeta(R)- \varsigma)^2, \,\, \mbox{ for } \,\, \varsigma \le \bar\zeta(R)
		\end{equation}
		and
		\begin{equation}\label{eq_3.41}
			\bar\eta(\bar\zeta,t) = e^{-2Lt} \phi (\bar\zeta + C_R t).
		\end{equation}
		Denoting
		\begin{equation}\label{eq_3.42}
			\eta = \bar\eta (\zeta(\widetilde \Psi(x,t)), t),
		\end{equation}
		and using \eqref{eq_3.13} it follows from the fact that $\phi'<0$ that
		\begin{equation}\label{eq_3.43}
			\begin{split}
				 (\partial_t - \Delta) \eta \le -2L \eta  - e^{2Lt}\frac{\phi''}{\phi'^2} |\nabla \eta|^2.
			\end{split}
		\end{equation}
		Hence, \eqref{eq_3.27} implies that
		\begin{equation}\label{eq_3.44}
			\begin{split}
				& (\partial_t - \Delta) (\eta W^2) \le -6\eta |\nabla W|^2 - 2\langle \nabla\eta, \nabla W^2\rangle-\frac{\phi\phi''}{\phi'^2} \frac{|\nabla \eta|^2}{\eta} W^2.
			\end{split}
		\end{equation}
		Following \cite{Ecker:89}, we write
		\begin{equation}\label{eq_3.45}
			\begin{split}
				& - 2\langle \nabla\eta, \nabla W^2\rangle = -6W\langle \nabla \eta, \nabla W\rangle +  \bigg\langle \frac{\nabla\eta}{\eta}, \nabla (\eta W^2)\bigg\rangle \frac{|\nabla \eta|^2}{\eta}W^2.
			\end{split}
		\end{equation}
		Replacing this above one concludes  that
		\small{\begin{equation}\label{eq_3.46}
			\begin{split}
				(\partial_t - \Delta) (\eta W^2) - \bigg\langle \frac{\nabla\eta}{\eta}, \nabla (\eta W^2)\bigg\rangle  
				&\le -6\eta |\nabla W|^2 -6W\langle \nabla \eta, \nabla W\rangle 
                \\
                &-\left(1+\frac{\phi\phi''}{\phi'^2}\right)\frac{|\nabla \eta|^2}{\eta}W^2.
			\end{split}
		\end{equation}}
		Using Young's inequality as in \cite{Ecker:89} we write
		\[
		-6W\langle \nabla \eta, \nabla W\rangle \le 6 \eta|\nabla W|^2 + \frac{3}{2} \frac{|\nabla \eta|^2}{\eta} W^2
		\]
		It follows from the choice of $\phi$ that
		\begin{equation}\label{eq_3.47}
			(\partial_t - \Delta) (\eta W^2) - \bigg\langle \frac{\nabla\eta}{\eta}, \nabla (\eta W^2)\bigg\rangle  
			\le 0
		\end{equation}
		and the conclusion is a direct consequence of the parabolic maximum principle.
	\end{proof}

\section{Height estimates}\label{height-est}

The \emph{a priori} $C^0$ bounds for the modified mean curvature flow \ref{MCF-mod} depend on geometric barriers mimicking the role of spherical caps in the Euclidean space. In what follows, we construct an example of such barriers.

\subsection{Examples of modified MCF in rotationally symmetric metrics}\label{examples-MCF}
    
	Let $M_+$ be a complete, non-compact, $n$-dimensional model manifold with respect to a fixed pole $o_+\in M_+$ in the sense that the Riemannian metric in $M_+$ can be expressed in Gaussian coordinates $(r, \vartheta)\in \mathbb{R}\times \mathbb{S}^{n-1}$ centered at $o_+$ as
	\begin{equation}\label{eq_3.49}
		g_+ = {\rm d}r^2 + \xi^2_+(r)\, {\rm d}\vartheta^2
	\end{equation} 
	where ${\rm d}\vartheta^2$ denotes the round metric in $\mathbb{S}^{n-1}$ and $\xi_+\in C^\infty ([0, \infty))$ satisfies condition \eqref{cond-2}. We also consider a smooth radial function $\varrho_+\in C^\infty([0, \infty))$ satisfying
	\begin{equation}\label{eq_3.50}
		\begin{split}
			& \varrho_+(r)> 0, \,\, \mbox{ for }\,\, r>0,  \\
			& \varrho_+(0) = 1,\\
			& \varrho_+^{(2k+1)} =0, \,\, \mbox{ for }\,\, k \in \mathbb{N}
		\end{split}
	\end{equation}
	and define the warped metric in $M_+ \times \mathbb{R}$ 
	\begin{equation}\label{eq_3.51}
		\varrho^2_+(r) {\rm d}s^2 + {\rm d}r^2 + \xi^2_+(r)\, {\rm d}\vartheta^2.
	\end{equation}
	We denote
	\begin{equation}\label{eq_3.52}
		A(r) =   \varrho_+(\varsigma)\xi^{n-1}_+(\varsigma)
	\end{equation}
	and
	\begin{equation}\label{eq_3.53}
		V(r) = \int_0^r \varrho_+(\varsigma)\xi^{n-1}_+(\varsigma)\, {\rm d}\varsigma.
	\end{equation}
	We also define
	\begin{equation}\label{eq_3.54}
		H(r) = -\frac{1}{n}\frac{A(r)}{V(r)}\cdot
	\end{equation}
	Given $x\in M_+$ we denote $r(x) = {\rm dist}(o_+,x)$. Let $B_R(o_+)$ be the closed geodesic ball centered at $o_+$ with radius $R\in (0,\infty)$. Hence $x\in B_R(o_+)$ if and only if $r(x) \le R$. The mean curvature of the Killing cylinder over the geodesic sphere $\partial B_r(o_+)$ is given by
	\begin{equation}\label{eq_3.55}
		H_{\rm cyl}(r) = \frac{1}{n} \left((n-1)\frac{\xi_+'(r)}{\xi_+(r)}+\frac{\varrho_+'(r)}{\varrho_+(r)}\right).
	\end{equation}
	Fixed this notation, we are able to state the following result.
	
	\begin{proposition}
    \label{caps}
		For each $R\in (0,\infty)$, the graph of the function 
		\begin{equation}\label{eq_3.56}
			v_R(x)  =\int^{r(x)}_R \frac{nH(R)V(\varsigma)}{\varrho(\varsigma) (A^2(\varsigma)-n^2 H^2(R)V^2(\varsigma))^{\frac{1}{2}}}\,{\rm d}\varsigma
		\end{equation}
		defined in $B_R(o_+)$ has constant mean curvature $H(R)$ and its boundary is the geodesic sphere $\partial B_R(o_+)$ in the model manifold $M_+\times_{\varrho_+} \mathbb{R}$.
	\end{proposition}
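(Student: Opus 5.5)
Since $M_+\times_{\varrho_+}\mathbb{R}$ is rotationally symmetric and $v_R$ is radial, I will reduce the mean curvature equation \eqref{H-1} to an ODE in $r$ and integrate it once. Write $v=v(r)$, so that $\nabla^M v = v'(r)\partial_r$ and $W=(\varrho_+^{-2}+v'^2)^{1/2}$. In the coordinates \eqref{eq_3.49} the Riemannian volume element of $M_+$ is $\xi_+^{n-1}(r)\,{\rm d}r\,{\rm d}\vartheta$. Hence for a radial vector field $f(r)\partial_r$ we have $\operatorname{div}_M(f\partial_r)=\xi_+^{1-n}(\xi_+^{n-1}f)'$. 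Adding the drift term $\langle\nabla^M\log\varrho_+, f\partial_r\rangle = f\varrho_+'/\varrho_+$ gives
\[
n H = \operatorname{div}_M\Big(\frac{\nabla^M v}{W}\Big)+\Big\langle\nabla^M\log\varrho_+,\frac{\nabla^M v}{W}\Big\rangle = \frac{1}{A(r)}\Big(A(r)\frac{v'}{W}\Big)'.
\]
Here I read \eqref{eq_3.52} as $A(r)=\varrho_+(r)\xi_+^{n-1}(r)$, so that $V'=A$. (Note that $\varrho$ in \eqref{eq_3.56} should be read as $\varrho_+$.) So the condition that the graph has constant mean curvature $H(R)$ becomes $(A v'/W)' = nH(R)A = nH(R)V'$.

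Next I integrate from $0$. The quantity $A v'/W$ vanishes at $r=0$ because $\xi_+(0)=0$ and $|v'|/W\le \varrho_+$ is bounded. This yields the first integral $A v'/W = nH(R)V$. Setting $q = nH(R)V/A$, we get $v'^2 = q^2 W^2 = q^2(\varrho_+^{-2}+v'^2)$, hence $v'^2(1-q^2)=q^2\varrho_+^{-2}$. Since $v'$ and $H(R)$ have the same sign, this gives
\[
v' = \frac{nH(R)V}{\varrho_+ (A^2-n^2H^2(R)V^2)^{1/2}},
\]
which is exactly the integrand in \eqref{eq_3.56}. Conversely, if $v$ is defined by \eqref{eq_3.56}, reversing these steps shows $A v'/W = nH(R)V$, and differentiating gives constant mean curvature $H(R)$. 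The normalization $v_R(R)=0$ shows that the boundary of the graph is $\partial B_R(o_+)\times\{0\}$, i.e.\ the geodesic sphere in the leaf $M_+$.

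The main obstacle is well-definedness of the integral on $[0,R]$. That requires $A^2(\varsigma)-n^2H^2(R)V^2(\varsigma)\ge 0$, i.e.\ $V(\varsigma)/A(\varsigma)\le V(R)/A(R)$ by \eqref{eq_3.54}. So I would check that $V/A$ is nondecreasing, which amounts to $A^2 - VA'\ge0$. If monotonicity fails in general, I would add it as a standing assumption on $(\xi_+,\varrho_+)$; in the model cases it holds (for instance $V/A = r/n$ in Euclidean space).

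Equality in $A^2\ge n^2H^2(R)V^2$ holds at $\varsigma=R$, so the integrand blows up like $(R-\varsigma)^{-1/2}$ there. This singularity is integrable, provided the derivative of $A^2-n^2H^2V^2$ at $R$ is nonzero. That derivative equals $2A(R)\big(A'(R)-A(R)^2/V(R)\big)$ up to sign, and it is nonzero exactly when the monotonicity is strict at $R$. The blow-up means the graph meets the Killing cylinder over $\partial B_R(o_+)$ vertically, like a hemisphere. Smoothness at the pole follows from the parity conditions \eqref{cond-2} and \eqref{eq_3.50}: they make $V/A$ an odd function of $r$, so $v_R$ is even in $r$ and hence smooth at $o_+$.
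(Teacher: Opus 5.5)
Your proof is correct and follows the paper's route. You reduce \eqref{H-1} for radial $v$ to $(A v'/W)' = nH(R)A$, obtain the first integral $A v'/W = nH(R)V$ (the paper gets it from the divergence theorem on $B_r(o_+)$ for the weighted divergence, you by integrating from the pole), and solve for $v'$.

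Your extra checks are not addressed in the paper's proof: that the integrand is real only if $V(\varsigma)/A(\varsigma)\le V(R)/A(R)$ on $[0,R]$, the integrability of the $(R-\varsigma)^{-1/2}$ singularity, and smoothness at $o_+$ via parity. The first genuinely requires an extra hypothesis, because monotonicity of $V/A$ can fail for admissible $\varrho_+$ (e.g.\ one increasing steeply on a short interval), so adding it as an assumption is the right call.
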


	\begin{proof} Fixed $R\in (0,\infty)$ let $v_R$ be the (radial) solution of the following Dirichlet problem for the constant mean curvature equation $M_+\times_{\varrho_+} \mathbb{R}$, namely
	\begin{equation}\label{eq_3.57}
		\begin{cases}
			& \operatorname{div}_+\left(\frac{\nabla^+ v_R}{W_+}\right)+g_+\left(\nabla\log\varrho_+,\frac{\nabla^+ v_R}{W_+}\right)=nH(R) \,\,\mbox{ in } \,\, B_R(o_+),\\
			& v_R|_{\partial B_R(o_+)} = 0,
		\end{cases}
	\end{equation}
	where the differential operators  ${\rm div}_+$ and $\nabla^+$ are defined with respect to the metric \eqref{eq_3.49} in $M_+$ and
	\[
	W_+ = (\varrho^{-2}_+(r)+ v'^2_R(r))^{\frac{1}{2}},
	\]
	with $'$ denoting derivatives  with respect to $r$.  Note that \eqref{eq_3.57} can be written in terms of a weighted divergence as
	\begin{equation}\label{eq_3.58}
		\operatorname{div}_{-\log\varrho_+} \left(\frac{\nabla^+ v_R}{W_+}\right) = \frac{1}{\varrho_+} \operatorname{div}_+ \left(\varrho_+\frac{\nabla^+ v_R}{W_+}\right) = nH(R).
	\end{equation}
	Hence, divergence theorem yields
	\begin{equation}\label{eq_3.59}
		\begin{split}
			\int_{B_r(o)} \textcolor{black}{\varrho_+} nH(R) \, {\rm d}M_+ &= \int_{B_r(o)} \operatorname{div}_+ \left(\varrho_+\frac{\nabla^+ v_R}{W}\right)\, {\rm d}M_+ \\
			& = \int_{\partial B_r(o)} g_+\left( \frac{\nabla^+ v_R}{W_+}, \partial_r\right) \varrho_+\, {\rm d}\partial B(r),
		\end{split}
	\end{equation}
	for $r\le R$. Since  $v_R$ is radial \eqref{eq_3.57} becomes
	\begin{equation}\label{eq_3.60}
		\begin{split}
			& \left(\frac{v'_R(r)}{(\varrho^{-2}_+(r)+v'^2_R(r))^{1/2}}\right)'+\frac{v'_R(r)}{(\varrho^{-2}_+(r)+v'^2_R(r))^{1/2}}\left(\frac{\varrho'_+(r)}{\varrho_+(r)}+(n-1)\frac{\xi'_+(r)}{\xi_+(r)}\right) \\
			& = nH(R).
		\end{split}
	\end{equation}
	It follows from \eqref{eq_3.59}  that  $v_R$ is the solution of the first order equation
	\begin{equation}\label{eq_3.61}
		\frac{v'_R(r)}{(\varrho^{-2}_+(r)+v'^2_R(r))^{1/2}} \,\varrho_+(r) \xi^{n-1}_+(r) =
		\int_0^r nH(R) \varrho_+(\varsigma) \xi^{n-1}_+(\varsigma)\, {\rm d}\varsigma.
	\end{equation}
	with initial condition $v_R|_{r=R}=0$. 
	Solving this expression for $v'_R$, one obtains
	\begin{equation}\label{eq_3.62}
		v'_R(r)=\frac{nH(R)V(r)}{\varrho_+(r)(A^2(r)-n^2 H^2(R)V^2(r))^{1/2}}\cdot
	\end{equation}
	The graph $\Sigma_R$ of $v_R$ is a rotationally invariant hypersurface which can be parametrized using coordinates $(s,r,\vartheta)$
	as $\varsigma\mapsto  (s(\varsigma), \vartheta, r(\varsigma))$, where $\varsigma$ can be taken as the arc lenght parameter. Denoting by $\phi$ the angle between the coordinate vector field $\partial_r$ and a given profile curve $\vartheta={\rm constant}$ in $\Sigma_R$ one has
	\begin{equation}\label{eq_3.63}
		\dot r = \cos\phi, \quad \varrho \dot s = \sin\phi.
	\end{equation}
	Thus,  \eqref{eq_3.60} becomes
	\begin{equation}\label{eq_3.65}
		\frac{{\rm d}\phi}{{\rm d}\varsigma} +\sin\phi\left(\frac{\varrho'(r)}{\varrho(r)}+(n-1)\frac{\xi'(r)}{\xi(r)}\right) = -nH(R).
	\end{equation}
	Hence, a profile curve of $\Sigma_R$ is given by the solution of the first order system
	\begin{equation}\label{eq_3.66}
		\begin{split}
			& \dot r = \cos\phi,\\
			& \varrho \dot s = \sin\phi,\\
			& \dot \phi =- nH(R) - nH_{{\rm cyl}}(r)\sin\phi,
		\end{split}
	\end{equation}
	with initial conditions $r(0) = R, s(0) = 0, \phi(0) = \frac{\pi}{2}$. In this case \eqref{eq_3.61} can be rewritten as
	\begin{equation}\label{eq_3.67}
		\varrho(r) A(r)\dot s =-nH(R) V(r)
	\end{equation}
	where $\cdot$ indicates derivatives with respect to the parameter $\varsigma$. Hence, it is immediate that when the coordinate $r$ attains its maximum, that is, when $r=R$, we have $\dot r=0$ and $\dot s=1$. This is consistent with the choice of $H(R)$ in 
	\eqref{eq_3.54}. We also observe that $\dot s \to 0$ and $\dot r \to 1$ as $r \to 0^+$.
	\end{proof}
	
	\vspace{3mm}
	For $R\ge r_0$, note that  the variable $\mu = R-r_0$ can be considered as the geodesic distance between the geodesic spheres $\partial B_{r_0}(o) = \partial\Sigma_{r_0}$ and $\partial B_R(o) =\partial\Sigma_R$. Hence, $\nabla \mu|_{\partial B_R(o)} =\partial_r|_{r=R}$.  Thus, we set a time parameter $t \in [0,\infty)$ given by
	\begin{equation}\label{eq_3.68}
		\begin{split} 
			& \frac{{\rm d}\mu}{{\rm d}t} = -n(H(R)-\sigma) = -n(H(\mu+r_0)-\sigma),\\
			& \mu(0) = 0.
		\end{split}
	\end{equation}
	Hence, $\mu=\mu(t)$ is implicitly defined by
	\[
	\int_{r_0}^{\mu(t)+r_0} \frac{V(\varsigma)}{A(\varsigma)}\,{\rm d}\varsigma = t
	\]
	Denote $R(t) = \mu(t) + r_0$. We claim that the one-parameter family of constant mean curvature graphs 
	$\{\Sigma_{R(t)}\}_{t\ge 0}$ evolves by the  (negative) mean curvature flow 
	\begin{equation}
		\label{eq_3.69}
		\partial_t \Psi^+ = -n(H(R(t))-\sigma)N_t,
	\end{equation}
	where
	\begin{equation}\label{eq_3.70}
		N_t= \frac{1}{W} (\varrho^{-2}(r) X - v'_R(r)\partial_r) = -\frac{\dot r}{\varrho} X +\varrho \dot s\, \partial_r.
	\end{equation}
	This means that $\Sigma_{R(t)} = \Psi_t^+ (\Sigma_{r_0})$. 
	In particular, we must have
	\begin{equation}\label{eq_3.71}
		\partial B_{R(t)} =\partial \Sigma_{R(t)} = \Psi^+_t (\partial\Sigma_{r_0}) = \Psi^+_t (\partial B_{r_0}).
	\end{equation}
	In other terms, the time parameter $t$ must be chosen in such a way that the geodesic spheres evolve as $\partial B_{R(t)}=\Psi^+_t(\partial B_{r_0})$, where $\Psi^+$ is the isometric immersion of $M_+$ as  in the definition of $\eqref{MCF-non-par}$. Since $\dot r =0$ and $\varrho \dot s=1$ at $r=R(t)$ it follows from \eqref{eq_3.69}  that
	\begin{equation}\label{eq_3.72}
		\begin{split}
			\frac{{\rm d}\mu}{{\rm d}t} &= \langle \partial_t\Psi^+, \nabla\mu\rangle = \langle \partial_t\Psi^+, \partial_r|_{r=R(t)}\rangle
			= -n(H(R(t))-\sigma)\langle N_t, \partial_r\rangle|_{r=R(t)}\\
			&=  - n(H(R(t))-\sigma) = -n(H(r_0+\mu(t))-\sigma)
		\end{split}
	\end{equation}
	what means that $t$ coincides with the parameter defined in \eqref{eq_3.68} and then satisfying the condition that $\partial B_{R(t)}=\Psi_t^+ (\partial B_{r_0})$. Note that $R(t)\ge r_0$ for $t\ge 0$. We conclude that the one-parameter family of functions $u_+(x, t) = v_{R(t)}(r(x))$ defined on the common domain $B_{r_0}(o)$ defines a solution of the geometric flow \eqref{eq_3.69}. Hence, we set
	\begin{equation}\label{eq_3.73}
		\Psi^+(x, t) = (x, u_+(x, t)), \quad x\in B_{r_0}(o).
	\end{equation} 
	It follows that $u_+$ satisfies the parabolic equation
	\begin{equation}\label{eq_3.74}
		\begin{split}
			& \partial_t u_+ X = \partial_t \Psi^+ =  -\Bigg(\partial_r\left(\frac{\partial_r u_+}{(\varrho^{-2}_+(r)+|\partial_r u_+|^2)^{1/2}}\right)\\
			& + \frac{\partial_r u_+}{(\varrho^{-2}_+(r)+|\partial_r u_+|^2)^{1/2}}\left(\frac{\varrho'_+(r)}{\varrho_+(r)}+(n-1)\frac{\xi'_+(r)}{\xi_+(r)}\right)-n\sigma\Bigg) N(t),
		\end{split}
	\end{equation}
	that is
	\begin{equation}\label{eq_3.75}
		\begin{split}
			& \partial_t u_+ =  -(\varrho^{-2}_+(r)+|\partial_r u_+|^2)^{1/2}  \Bigg(\partial_r\left(\frac{\partial_r u_+}{(\varrho^{-2}_+(r)+|\partial_r u_+|^2)^{1/2}}\right)\\
			& + \frac{\partial_r u_+}{(\varrho^{-2}_+(r)+|\partial_r u_+|^2)^{1/2}}\left(\frac{\varrho'_+(r)}{\varrho_+(r)}+(n-1)\frac{\xi'_+(r)}{\xi_+(r)}\right)-n\sigma\Bigg).
		\end{split}
	\end{equation}

\subsection{Barriers and height estimates for the modified mean curvature flow}\label{C0-bounds}
    
	In the previous section, we have constructed a  modified mean curvature flow by constant mean curvature spherical caps in the model manifold whose Riemannian metric is defined by \eqref{eq_3.49}-\eqref{eq_3.51}.

    Besides providing a relevant example, that flow may be used as a supersolution to the modified mean curvature flow in manifolds whose metrics are not necessarily invariant by rotations. This is the content of the following propositions.
	
    \begin{proposition}\label{supersolution}
		Suppose that \eqref{cond-3} holds  with $\xi(r) = \xi_+(r)$ and that  $\varrho(x) = \varrho_+(r(x))$.
		Then, the one-parameter family of functions
		\begin{equation}\label{eq_3.76}
			u_+(x, t) = v_{R(t)} (r(x)), \quad x\in B_{r_0}(o), \quad t\in [0,\infty)
		\end{equation} 
		satisfies $\partial_t u_+ + \mathcal{Q}[u_+] \ge 0$ in $B_{r_0}(o)\times [0, T)$ for all $T>0$.
	\end{proposition}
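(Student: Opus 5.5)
The plan is to compare the operator $\mathcal{Q}$ applied to the radial function $u_+$ in $M$ with the model operator appearing in \eqref{eq_3.75}. Then \eqref{eq_3.75}, together with the Hessian comparison \eqref{hess-comp}, will give the required sign.

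First, write $u_+(x,t)=f(r(x),t)$ with $f=v_{R(t)}$. Then $\nabla^M u_+ = f'\nabla^M r$, $|\nabla^M u_+| = |f'|$ since $|\nabla^M r|=1$, and $W=(\varrho^{-2}+f'^2)^{1/2}$. Because $\varrho=\varrho_+(r)$, $W$ is also radial. Expanding the divergence gives
\[
\operatorname{div}_M\Big(\frac{\nabla^M u_+}{W}\Big)+\Big\langle\nabla^M\log\varrho,\frac{\nabla^M u_+}{W}\Big\rangle
=\Big(\frac{f'}{W}\Big)'+\frac{f'}{W}\Big(\Delta_M r+\frac{\varrho_+'}{\varrho_+}\Big),
\]
where $'$ denotes $\partial_r$. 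The model equation \eqref{eq_3.75} is exactly the same expression with $\Delta_M r$ replaced by $(n-1)\xi_+'/\xi_+$. Comparing the two yields
\[
\partial_t u_+ + \mathcal{Q}[u_+] = f'\,\big(\Delta_M r-(n-1)\tfrac{\xi'}{\xi}\big).
\]
The $n\sigma$ terms and the second-order terms cancel identically.

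Next, I determine the sign of each factor.
\begin{itemize}
\item Taking the trace of \eqref{hess-comp} gives $\Delta_M r\le (n-1)\xi'/\xi$, so the bracket is $\le 0$.
\item From \eqref{eq_3.62}, $f'$ has the sign of $H(R)$, and $H(R)<0$ by \eqref{eq_3.54}. Hence $f'\le 0$ on $B_{r_0}(o)$, which is consistent with $v_R$ vanishing on $\partial B_R$ and being a cap bending upward toward the center.
\end{itemize}
The product of two nonpositive factors is $\ge 0$, which is the claim. Condition \eqref{cond-3} is needed so that the quantity under the square root in \eqref{eq_3.62} stays positive, i.e.\ $A^2(r)>n^2H(R)^2V^2(r)$ for $r<R$. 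That makes $f$ well defined and smooth on $B_{r_0}(o)\subset B_{R(t)}(o)$ for all $t$, using that $R(t)\ge r_0$ increases. The monotonicity of $A/V$ involved here follows from $(\log A)' = (n-1)\xi'/\xi+\varrho'/\varrho\ge 0$ under \eqref{cond-3}.

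The step I expect to be the main obstacle is bookkeeping rather than analysis. The sign conventions in \eqref{eq_3.69}–\eqref{eq_3.75} (the "negative" flow and the choice of $N_t$) must be matched exactly with the definition of $\mathcal{Q}$ in \eqref{eq_3.9}, so that the time-derivative and $\sigma$ terms cancel and not double. I would first verify this carefully by rederiving $\partial_t u_+$ directly from $\partial_t v_{R(t)}=\dot R\,\partial_R v_R$ and \eqref{eq_3.68}. The remaining point is regularity at the pole $r=0$, where $\Delta_M r$ is singular. There $f'(r)\sim c\,r$ by \eqref{eq_3.62} and $V(r)\sim r^n/n$, so $f'\Delta_M r$ stays bounded and the inequality extends by continuity.
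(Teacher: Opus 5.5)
Your computation is the paper's: both expand $\partial_t u_+ + \mathcal{Q}[u_+]$ for the radial function, use $\langle\nabla^M\log\varrho,\nabla^M r\rangle=\varrho_+'/\varrho_+$, the traced Hessian comparison and $v_R'\le 0$, and cancel everything else against \eqref{eq_3.75}. Your identity $\partial_t u_+ + \mathcal{Q}[u_+] = f'\,(\Delta_M r-(n-1)\xi'/\xi)$ is the paper's inequality written as an equality.

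There is a genuine gap, which you share with the paper, in the step you classify as bookkeeping: the check you propose would fail in general. By \eqref{eq_3.60}, the model operator in \eqref{eq_3.75} applied to $v_R$ equals $nH(R)$. Together with \eqref{eq_3.68}, that equation therefore asserts $\partial_t u_+ = \dot R\, W_R$ with $W_R=(\varrho^{-2}+v_R'^2)^{1/2}$. Differentiating the definition of $u_+$ instead gives $\partial_t u_+ = \dot R\,\partial_R v_R$. These agree only if $\partial_R v_R = W_R$ on $B_{r_0}(o)$, i.e.\ if the caps move with unit normal speed in $R$. The paper's check \eqref{eq_3.72} only matches the speeds at $r=R(t)$. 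Equality holds in $\mathbb{R}^{n+1}$ and $\mathbb{H}^{n+1}$, where the caps are concentric geodesic hemispheres, but not in general. In $\mathbb{H}^2\times\mathbb{R}$ ($\xi=\sinh r$, $\varrho=1$) one finds $v_R(r)=2\cosh\frac R2\,\arccos\big(\cosh\frac r2/\cosh\frac R2\big)$ and $\partial_R v_R-W_R=\sinh\frac R2\,\arccos\big(\cosh\frac r2/\cosh\frac R2\big)>0$ for $r<R$.

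The correct identity is therefore
\[
\partial_t u_+ + \mathcal{Q}[u_+] = \dot R\,\big(\partial_R v_R - W_R\big) + f'\Big(\Delta_M r-(n-1)\frac{\xi'}{\xi}\Big),
\]
and your argument controls only the second term. For the first term you need two things:
\begin{itemize}
\item $\dot R\ge 0$, which is implicit in the requirement $R(t)\ge r_0$;
\item $\partial_R v_R\ge W_R$ for the cap family, which is not established anywhere.
\end{itemize}
In the $\mathbb{H}^2\times\mathbb{R}$ example the second term vanishes identically, so the sign of the whole expression is exactly the sign of $\dot R$.

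A smaller point concerns your use of \eqref{cond-3}. The condition $(\log A)'\ge 0$ does not make $A/V$ decreasing; that property is log-concavity of $V$, which follows, e.g., from log-concavity of $A$. So \eqref{cond-3} does not by itself guarantee that the radicand in \eqref{eq_3.62} is positive. Since the statement takes $u_+$ as given, this does not affect your main line.
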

	
	\begin{proof}
		Denoting
		\begin{equation}\label{eq_3.77}
			W = (\varrho^{-2}+|\nabla^M u_+|^2)^{1/2} = (\varrho^{-2}_+ + u'^2_+(r))^{1/2}
		\end{equation}
		we have
		\begin{equation}\label{eq_3.78}
			\begin{split}
				&  \mathcal{Q}[u_+] +\partial_t u_+= W\Bigg({\rm div}_M\left(\frac{\nabla^M u_+}{W}\right) + \left\langle\nabla^M\log\varrho, \frac{\nabla^M u_+}{W}\right\rangle-n\sigma\Bigg) +\partial_t u_+\\
				& \,\, =(\varrho^{-2}+ u'^2_+(r))^{1/2}\Bigg(\partial_r\left(\frac{u'_+(r)}{(\varrho^{-2}_+ + u'^2_+(r))^{1/2}}\right) \\
				& \,\,\,\,+ \frac{u'_+(r)}{(\varrho^{-2}_+ + u'^2_+(r))^{1/2}} \big(\Delta_M r + \langle \nabla^M \log \varrho, \nabla^M r\rangle\big) -n\sigma\Bigg)+ \partial_t u_+.
			\end{split}
		\end{equation}
		However
		\begin{equation}\label{eq_3.79}
			\langle \nabla \log \varrho, \nabla r\rangle = \frac{\partial_r \varrho}{\varrho} = \frac{\varrho'_+(r)}{\varrho_+(r)}\cdot
		\end{equation}
		Moreover \eqref{hess-comp} implies that 
		\begin{equation}\label{eq_3.80}
			\Delta_M r \le (n-1)\frac{\xi'_+(r)}{\xi_+(r)}\cdot
		\end{equation}
		Since $u'_+= v'_R\le 0$ we conclude that
		\begin{equation}\label{eq_3.81}
			\begin{split}
				&  \mathcal{Q}[u_+] +\partial_t u_+ \ge (\varrho_+^{-2}+u'^2_+(r))^{1/2}\Bigg(\partial_r\left(\frac{u'_+(r)}{(\varrho^{-2}_+ + u'^2_+(r))^{1/2}}\right)\\
				& \,\,\,\,+\frac{u'_+(r)}{(\varrho^{-2}_+ + u'^2_+(r))^{1/2}}\left(\frac{\varrho'_+(r)}{\varrho_+(r)}+(n-1)\frac{\xi'_+(r)}{\xi_+(r)}\right)-n\sigma\Bigg)   + \partial_t u_+=0.
			\end{split}
		\end{equation}
		This finishes the proof.
	\end{proof}
	\begin{proposition}
		\label{height-mcf}
		Suppose that \eqref{cond-3} holds  with $\xi(r) = \xi_+(r)$ and that  $\varrho(x) = \varrho_+(r(x))$.  Let $u$ be a solution of  \eqref{eq_3.8}-\eqref{eq_3.9} in $B_R(o)\times [0,T]$  with  Dirichlet boundary condition $u(x,t) = u(x,0)$ for any $(x, t) \in \partial B_{r_0}(o) \times [0,T]$. Then we have the following height estimate
		\begin{equation}\label{eq_3.82}
			|u(x,t)| \le {\sup}_{B_{r_0}(o)} |u| 
			+v_{R(T)} (o) - v_{r_0} (r(x)),
		\end{equation}
		that is,
		\begin{equation}\label{eq_3.83}
			\begin{split}
				& |u(x,t) |\le {\sup}_{B_{r_0}(o)} |u(\cdot, 0)| + \int^{0}_{R(T)} \frac{nH(R(T)) V(\varsigma)}{\varrho(\varsigma) (A^2(\varsigma)-n^2 H^2(R(T))V^2(\varsigma))^{\frac{1}{2}}}\,{\rm d}\varsigma\\
				& \,\, - \int^{r(x)}_{r_0} \frac{nH(r_0) V(\varsigma)}{\varrho(\varsigma) (A^2(\varsigma)-n^2 H^2(r_0)V^2(\varsigma))^{\frac{1}{2}}}\,{\rm d}\varsigma.
			\end{split}
		\end{equation}
		for $(x,t) \in B_{r_0}(o) \times [0, T]$.
	\end{proposition}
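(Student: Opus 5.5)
The plan is to compare $u$ with translated copies of the shrinking/expanding caps $u_+(x,t)=v_{R(t)}(r(x))$ from Proposition~\ref{supersolution}, via the parabolic comparison principle for the quasilinear operator $\partial_t-\mathcal{Q}$ on the cylinder $B_{r_0}(o)\times[0,T]$. The equation is invariant under vertical translations $u\mapsto u+c$, since $\mathcal{Q}$ depends only on $\nabla^M u$. It is also compatible with the reflection $u\mapsto -u$ once the sign of the $n\sigma$ term is tracked. This is why the flow \eqref{eq_3.69} for the caps was set up with the opposite sign, so that $-u_+$ (or $u_+$ after reflection) acts as a barrier from above and from below. I will therefore prove the upper bound and obtain the lower bound by applying the same argument to the reflected graph.

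For the upper bound, define the candidate barrier
\[
w(x,t)= {\sup}_{B_{r_0}(o)}|u(\cdot,0)| + v_{R(T)}(o) - v_{R(T-t)}(r(x)),
\]
or, more simply, use the time-reversed family so that at $t=0$ the barrier is the largest cap. The first step is to check the initial data. Since $v_R\le 0$ is decreasing in $r$ ($v_R'\le 0$, $v_R(R)=0$), its maximum height is attained at the center, $|v_R(o)|=\sup|v_R|$. The family is monotone in $R$, because $R(t)$ is increasing and caps over larger balls lie below. Together these give $w(\cdot,0)\ge \sup|u(\cdot,0)|\ge u(\cdot,0)$ on $B_{r_0}(o)$.

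The second step is the parabolic boundary. On $\partial B_{r_0}(o)$ we have $u(x,t)=u(x,0)\le\sup|u(\cdot,0)|$. Since $v_{R}(r_0)\le 0$, we get $w\ge \sup|u(\cdot,0)| + v_{R(T)}(o)-v_{R(T-t)}(r_0)$, and monotonicity in $R$ shows the last two terms together are nonnegative. The third step is the differential inequality. By Proposition~\ref{supersolution}, $\partial_t u_+ + \mathcal{Q}[u_+]\ge 0$; after the time reversal $t\mapsto T-t$ and the change of sign, this becomes $\partial_t w-\mathcal{Q}[w]\ge0$, so $w$ is a supersolution. The comparison principle then gives $u\le w$. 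Evaluating with the crude bound $-v_{R(T-t)}(r(x))\le -v_{r_0}(r(x))$, which again uses monotonicity in $R$, yields \eqref{eq_3.82}.

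The main obstacle is the sign bookkeeping in the third step. The equation $\partial_t u=\mathcal{Q}[u]$ contains the term $-n\sigma W$, which does not change sign under $u\mapsto -u$, so the reflection argument for the lower bound is not literally symmetric. I would first check whether Proposition~\ref{supersolution} as stated, with the sign convention \eqref{eq_3.69}, already provides the correct one-sided inequality for $-u_+$. If not, I would redo its computation with $\sigma$ replaced by $-\sigma$, which is permitted since the estimate \eqref{eq_3.80} uses only $u_+'\le0$ and not the sign of $\sigma$. A secondary point is the monotonicity of $v_R$ in $R$. Here I would rely on the ODE comparison from the profile system \eqref{eq_3.66}, using that $H(R)$ is increasing (toward $0$) in $R$ because $V/A$ increases; this is where condition \eqref{cond-3} enters.
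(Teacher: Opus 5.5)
Your barrier $w(x,t)=\sup_{B_{r_0}(o)}|u(\cdot,0)|+v_{R(T)}(o)-v_{R(T-t)}(r(x))$ is exactly the paper's $v(x,t)=-u_+(x,T-t)+u_+(o,T)+\sup u$. The rest of your plan is also the paper's argument: comparison, then $v(x,t)\le v(x,T)$, then the mirror barrier $u_+(x,T-t)-u_+(o,T)+\inf u$ for the lower bound. However, the sign facts you invoke are backwards. Since $H(R)<0$, \eqref{eq_3.56} gives $v_R\ge 0$ on $B_R(o)$ with $v_R'\le 0$; in $\mathbb{R}^{n+1}$, $v_R(r)=\sqrt{R^2-r^2}$. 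Caps over larger balls therefore lie \emph{above}, i.e.\ $t\mapsto u_+(x,t)=v_{R(t)}(r(x))$ is nondecreasing. Within the paper's framework this comes straight from \eqref{eq_3.75}, $\partial_t u_+=n(\sigma-H(R(t)))W\ge 0$ while $R(t)$ increases, rather than from an ODE comparison. With your stated signs ($v_R\le 0$, larger caps below), the boundary inequality $v_{R(T)}(o)\ge v_{R(T-t)}(r_0)$ and your final bound $-v_{R(T-t)}(r(x))\le -v_{r_0}(r(x))$ would both go the wrong way. With the correct signs they follow from $v_{R(T-t)}(r_0)\le v_{R(T)}(r_0)\le v_{R(T)}(o)$ and $v_{R(T-t)}\ge v_{r_0}$.

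Your worry about $\sigma$ is justified but aimed at the wrong barrier. Since $\mathcal{Q}[-u]=-\mathcal{Q}[u]-2n\sigma W$, the upper barrier satisfies $\partial_t w-\mathcal{Q}[w]=\partial_t u_++\mathcal{Q}[u_+]+2n\sigma W$; the identity \eqref{eq_3.87} in the paper omits the last term. This is harmless for $\sigma\ge 0$ but can fail for $\sigma<0$. By contrast, the lower barrier $u_+(x,T-t)+c$ is a subsolution for every $\sigma$ as it stands. Your remedy of rebuilding the upper cap flow with $-\sigma$ is legitimate, because the $\sigma$-terms cancel identically in the proof of Proposition~\ref{supersolution}. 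A cleaner option is to build both barriers from the cap flow with a parameter $\tau\ge|\sigma|$. The extra terms then become $n(\tau+\sigma)W\ge 0$ for the upper barrier and $-n(\tau-\sigma)W\le 0$ for the lower one, and $R'(t)=n(\tau-H(R))>0$ holds automatically. For $\sigma<0$ the $\sigma$-flow itself can have $R'<0$, which would break the monotonicity your boundary and final steps rely on. With this choice, $R(T)$ in \eqref{eq_3.82} is the radius of the $\tau$-flow.
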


	\begin{proof} By construction, the graph $\Sigma_{r_0}$ of $u_+(\cdot, 0) =v_{r_0}$ is defined in the geodesic ball $B_{r_0}(o)$. Given $T>0$ we have that
		$\Psi^+_T(\Sigma_{r_0})$ is the graph $\Sigma_{R(T)}$ of $u_+ (\cdot, T) = v_{R_T}|_{B_{r_0}(o)}$ with
		\begin{equation}\label{eq_3.84}
			\int_{r_0}^{R(T)} \frac{V(\varsigma)}{A(\varsigma)}\, {\rm d}\varsigma = T.
		\end{equation}
		Given $\varepsilon>0$ we have
		\begin{equation}\label{eq_3.85}
			-u_+(x,T) + u_+(o,T)+ {\sup}_{B_{r_0}(o)} u + \varepsilon > u(x, 0)
		\end{equation}
		for all $x\in B_{r_0}(o)$.  We also have
		\begin{equation}\label{eq_3.86}
			v_\varepsilon(x, t) := -u_+(x, T-t) + u_+(o,T)+ {\sup}_{B_{r_0}(o)} u + \varepsilon > u(x, t)
		\end{equation}
		for all $(x, t) \in \partial B_{r_0} (o) \times [0, T]$. Proposition \ref{supersolution} implies that
		\begin{equation}\label{eq_3.87}
			\partial_t v_\varepsilon - \mathcal{Q}[v_\varepsilon] =\partial_t u_+ + \mathcal{Q}[u_+] \ge 0
		\end{equation}
		in the parabolic cylinder $B_{r_0}(o)\times (0, T)$. Then the parabolic maximum principle implies that
		\begin{equation}\label{eq_3.88}
			u(x,t) \le v (x,t) \le v(x,T)
		\end{equation}
		in $B_{r_0} (o) \times [0, T]$ where
		\begin{equation}\label{eq_3.89}
			v(x, t) = -u_+(x, T-t) + u_+(o,T)+ {\sup}_{B_{r_0}(o)} u.
		\end{equation}
		Hence,
		\begin{equation}\label{eq_3.90}
			u(x,t) \le v(x,T) = u_+(o,T)-u_+(x, 0) + {\sup}_{B_{r_0}(o)} u.
		\end{equation}
		Therefore
		\begin{equation}\label{eq_3.91}
			u(x,t) \le {\sup}_{B_{r_0}(o)} u 
			+v_{R(T)} (o) - v_{r_0} (r(x))
		\end{equation}
		for $(x,t) \in B_{r_0} (o) \times [0, T]$.  We prove in a similar way that 
		\begin{equation}\label{eq_3.92}
			u(x,t) \ge  w (x,t) \ge w(x,T)
		\end{equation}
		in $B_{r_0} (o) \times [0, T]$ where
		\begin{equation}\label{eq_3.93}
			w (x, t) = u_+(x, T-t) - u_+(o,T)+ {\inf}_{B_{r_0}(o)} u.
		\end{equation}
		Hence
		\begin{equation}\label{eq_3.94}
			u(x,t) \ge {\inf}_{B_{r_0}(o)} u -
			v_{R(T)} (o) + v_{r_0} (r(x))
		\end{equation}
		in $B_{r_0} (o) \times [0, T]$. This finishes the proof.
	\end{proof}
	
	\section{Gradient bounds}\label{C1-est}

In this section, we obtain \emph{a priori} $C^1$ bounds for the modified mean curvature flow adapting the techniques in \cite{Ecker:89} and \cite{Ecker:91} to the general setting of Riemannian warped products. The geometric functions and differential inequalities in proposition \ref{hess-r} play a central role in the proof of these gradient estimates.
     
	\begin{proposition}\label{intest} Let $o \in M$. Given $R>0$ and $T '>0$, suppose that 
		\begin{equation}\label{eq_3.109}
			|\partial_r \log\varrho|\le \frac{\xi'(r)}{\xi(r)}
		\end{equation}
		in $B_R(o)\subset M$ and that $\overline{\rm Ric}\ge -L$ for some constant $L\ge 0$ in 
		\[
		\mathcal{S}_{R. T ' } = \bigcup_{t\in [0, T ' ]}\{\widetilde\Psi(x, t): \zeta (\widetilde\Psi(x,t)) < \bar \zeta(R)\}
		\]
        If $R'<R$, then for any $\widetilde\Psi(x,t) \in \mathcal{S}_{R,T'}$ we have that
        \begin{equation}
            W(x,t)\leq \Lambda(L,T',R)\, \max\left\{{\sup}_{\widetilde\Psi_0(B_R(o))}  W(\cdot, 0), \frac{\beta C_R}{2\, {\sup}_{B_R(o)}\xi} \, \frac{{\sup}_{B_R(o)}\varrho}{{\inf}_{B_R(o)}\varrho} \right\}.
        \end{equation}
		where $\Lambda(L,T',R)=\frac{\exp(LT')\exp(\lambda \zeta(R))}{(\exp(\lambda(\zeta(R) - \zeta(R'))) -1)}.$ In particular, 
		\begin{equation}
			\label{eq_3.110}
			W(o,t) \le \text{\small $\frac{ \exp(\lambda \zeta(R))}{\exp(\lambda \zeta(R)/2 )-1}\, \max\left\{{\sup}_{\widetilde\Psi_0(B_R(o))}  W(\cdot, 0), \frac{\beta C_R}{2\, {\sup}_{B_R(o)}\xi} \, \frac{{\sup}_{B_R(o)}\varrho}{{\inf}_{B_R(o)}\varrho} \right\}$}
		\end{equation}
		for $0\leq t\leq T ' $, where 
		\begin{equation}\label{eq_3.111}
			C_R = n\, {\sup}_{B_R(o)} (\xi'+|\sigma| \xi).
		\end{equation}
		and
		\begin{equation}\label{eq_3.112}
			\begin{split}
				\lambda =  8  \beta^2 C_R   \, {\sup}^2_{B_R(o)}\varrho  
			\end{split}
		\end{equation}
		with
		\begin{equation}\label{eq_3.113}
			\beta:  = \frac{1}{\bar\zeta(R)}\, \big({\sup}_{ \mathcal{S}_{R, T '} }  |s| - {\sup}_{[0, T ' ]} (s\circ\widetilde \Psi) (o,t)\big).
		\end{equation}
		and $\alpha = {\sup}_{B_R(o)} \left(2|\bar\nabla\log\varrho|+n|\sigma|\right)$ and  $\widetilde \delta =2\, {\sup}_{B_R(o)}(2|\bar\nabla\log\varrho|+2\xi+n|\sigma|)$. 
	\end{proposition}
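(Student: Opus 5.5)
This is an Ecker--Huisken interior gradient estimate (Theorem 2.3 in \cite{Ecker:89}). The plan is to multiply $W$ by a cutoff built from two pieces: the localizing function $\zeta$ of Proposition \ref{hess-r}, and the height function $s$ restricted to $\Sigma_t$. The argument then closes with the parabolic maximum principle.

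\emph{Step 1: the test function.} Set
\[
\theta = \bar\zeta(R) - \zeta - \beta^{-1}\bigl(s - {\sup}_{[0,T']}(s\circ\widetilde\Psi)(o,t)\bigr) \cdot(\text{normalized}),
\]
or more simply work with $\varphi(\eta)$, $\varphi(\eta)=e^{\lambda\eta}-1$, where
\[
\eta = \Bigl(\bar\zeta(R) - \zeta - C_R t\Bigr)_+ \ \text{combined with}\ \Bigl(1 - \tfrac{s - s_0}{\beta\bar\zeta(R)}\Bigr).
\]
The roles of the two factors are as follows. The factor in $s$ makes the cutoff satisfy a favourable inequality coming from $\langle\bar\nabla s,N\rangle = \varrho^{-2}W^{-1}$. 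The factor in $\zeta$ localizes to $\mathcal S_{R,T'}$ and vanishes on its parabolic boundary.

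\emph{Step 2: evolution of the pieces.} I then use:
\begin{itemize}
\item \eqref{eq_3.11} for the evolution of $s$;
\item \eqref{eq_3.13}, together with \eqref{eq_3.109}, which ensures $\langle\bar\nabla\log\varrho,\nabla r\rangle-\xi'/\xi\le 0$, so that the $|\nabla s|^2$ term has a good sign; this gives $(\partial_t-\Delta)\zeta\ge -C_R$ for $\zeta$;
\item \eqref{eq_3.27}, multiplied by $e^{-Lt}$, for $W$.
\end{itemize}
Writing $h = e^{-Lt}W$ and $Q=h\,\varphi(\eta)$, I would get
\[
(\partial_t-\Delta)Q \le -2W^{-1}\langle\nabla W,\nabla\ldots\rangle + h\bigl(\varphi'(\partial_t-\Delta)\eta - \varphi''|\nabla\eta|^2\bigr).
\]
The gradient cross term is handled exactly as in \eqref{eq_3.45}--\eqref{eq_3.47}: I rewrite it using $\nabla Q$ and apply Young's inequality, which produces a term of the form $+ c\,\varphi'^2/\varphi\,|\nabla\eta|^2 h$.

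\emph{Step 3 (main obstacle): absorbing the bad gradient term.} The term $\varphi'^2/\varphi\,|\nabla\eta|^2 h$ must be absorbed. There are two sources of good terms.
\begin{itemize}
\item The first is $-\varphi''|\nabla\eta|^2 h$. With $\varphi=e^{\lambda\eta}-1$ one has $\varphi''\varphi\ge\varphi'^2\cdot(\text{const})$ only after $\lambda$ is taken large; hence the choice \eqref{eq_3.112}.
\item The second is the contribution of $s$: $-(\partial_t-\Delta)s$ contains $n\sigma\langle\bar\nabla s,N\rangle$. The key point is the pointwise bound $|\nabla\eta|^2\lesssim \beta^2\varrho^2(1-W^{-2}\varrho^{-2})+\xi^2$. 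This brings in $\sup\varrho$ and the ratio $\sup\varrho/\inf\varrho$, with the lower-order coefficients $\alpha,\widetilde\delta$ controlling the drift terms $2\langle\bar\nabla\log\varrho,N\rangle+n\sigma$.
\end{itemize}

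\emph{Step 4: conclusion.} At an interior maximum of $Q$ where $W$ exceeds the threshold $\frac{\beta C_R}{2\sup\xi}\frac{\sup\varrho}{\inf\varrho}$, the right-hand side becomes negative. The maximum principle, applied with $Q=0$ on the parabolic boundary of $\mathcal S_{R,T'}$, then gives $Q\le \max\{\varphi(\eta_{\max})\sup W(\cdot,0),\text{threshold}\}$. Dividing by $\varphi(\eta)$ on the set $\zeta\le\bar\zeta(R')$ yields the factor $\Lambda(L,T',R)$. Taking $x=o$, with $\zeta(R')=\zeta(R)/2$, gives \eqref{eq_3.110}.

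I expect the delicate part to be Step 3: the constants must line up so that the $\sigma$ and $\log\varrho$ drift terms are dominated. If they do not, I would enlarge $\lambda$ by the factors $\alpha,\widetilde\delta$.
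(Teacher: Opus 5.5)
Your overall scheme is the paper's: Korevaar's exponential cutoff $\varphi=e^{\lambda\eta}-1$ built from $\zeta$ and the height $s$, closed by the maximum principle. However, Steps 2--3 rest on a mechanism that cannot work.

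After Young's inequality you plan to absorb $+c\,\varphi'^2\varphi^{-1}|\nabla\eta|^2h$ into $-\varphi''|\nabla\eta|^2h$ by taking $\lambda$ large. For $\varphi=e^{\lambda\eta}-1$ one has $\varphi\varphi''/\varphi'^2=1-e^{-\lambda\eta}$. This is $<1$ for every $\lambda$ and tends to $0$ at the edge of the support. So the absorption fails near $\{\eta=0\}$ for any $c>0$, and everywhere if $c\ge 1$. The quadratic cutoff of \eqref{eq_3.40} survives Young only because $\phi\phi''/\phi'^2\equiv 1/2$ there. Moreover, whatever part of $\varphi''|\nabla\eta|^2$ you spend this way is lost for the job it actually has to do.

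The point you are missing is that no Young's inequality is needed. By \eqref{eq_3.27} and $\overline{\Ric}\ge -L$, $h=e^{-Lt}W$ satisfies $(\partial_t-\Delta)h\le -2h^{-1}|\nabla h|^2$, hence
\[ (\partial_t-\Delta)(h\varphi)\le h\,(\partial_t-\Delta)\varphi-2h^{-1}\langle\nabla h,\nabla(h\varphi)\rangle . \]
Here the term $-2\varphi h^{-1}|\nabla h|^2$ cancels the cross term exactly. Your first display in Step 2, read with $\nabla(h\varphi)$ in place of the dots, is already this; the paper uses it tacitly in \eqref{eq_3.119}. So at an interior maximum you only need $(\partial_t-\Delta)\varphi<0$, with all of $-\varphi''|\nabla\eta|^2$ at your disposal.

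That inequality comes from a \emph{lower} bound on the $s$-part of $|\nabla\eta|^2$, not an upper bound on $|\nabla\eta|^2$. Take $\eta=\bar\zeta(R)-\zeta+\chi(s)$ with $\chi$ linear, so $\ddot\chi=0$. Then \eqref{eq_3.125} gives the good term $-\lambda^2(\varphi+1)\dot\chi^2\varrho^{-2}(1-\varrho^{-2}W^{-2})$, which has size $\beta^{-2}\varrho^{-2}$, not $\beta^{2}\varrho^{2}$.

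The $s$-drift $(2\langle\bar\nabla\log\varrho,N\rangle+n\sigma)\langle\bar\nabla s,N\rangle$ is not a source of good terms. It has no sign and is $O(W^{-1})$ by \eqref{eq_3.124}, as is the mixed term $2\xi\dot\chi\varphi''\langle\bar\nabla r,N\rangle\langle\bar\nabla s,N\rangle$. The real role of large $\lambda$, namely $\lambda\sim\beta^2C_R\sup\varrho^2$ as in \eqref{eq_3.112}, is to let $\lambda^2(\varphi+1)\dot\chi^2\varrho^{-2}$ beat the zeroth-order term $\lambda(\varphi+1)C_R$ coming from \eqref{eq_3.13}. The threshold on $W$ then comes out of the $O(W^{-1})$ terms and the factor $1-\varrho^{-2}W^{-2}$; it has to be derived, not quoted from the statement.

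This is also why you should not put $-C_Rt$ into the cutoff. With it, the support is empty for $t\ge\bar\zeta(R)/C_R$. On $\{\zeta\le\bar\zeta(R')\}$ you then get no lower bound for $\varphi(\eta)$ unless $C_RT'<\bar\zeta(R)-\bar\zeta(R')$. So you could not reach the stated $\Lambda(L,T',R)$, which contains no $C_RT'$. In your version large $\lambda$ would then have no legitimate role at all.

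Finally, you must actually fix the test function rather than leave it as ``combined with''. The $s$-term has to be normalized so that it is $\le 0$ on $\mathcal{S}_{R,T'}$, so that the cutoff still vanishes where $\zeta=\bar\zeta(R)$. It also has to be $\ge -\bar\zeta(R)/2$ at $(o,t)$, which is what produces the denominator $\exp(\lambda\zeta(R)/2)-1$ in \eqref{eq_3.110}.
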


	\begin{proof}  Denote
		\begin{equation}\label{eq_3.114}
			\alpha = {\sup}_{B_R(o)} \left(2|\bar\nabla\log\varrho|+n|\sigma|\right).
		\end{equation}
		Following \cite{Korevaar:86} and  \cite{Ecker:91} we define
		\begin{equation}\label{eq_3.115}
			\phi(\tau) = e^{\lambda \tau} -1, \quad \tau\ge 0.
		\end{equation}
		Given the function  $\zeta$  defined in \eqref{eq_3.12}  one sets 
		\begin{equation}\label{eq_3.116}
			\eta(\widetilde\Psi(x,t)) = e^{-Lt} \phi\left(\left(\bar\zeta(R)-\zeta(\widetilde \Psi(x,t))+\chi(s(\widetilde \Psi(x,t)))\right)_+\right),
		\end{equation}
		for $(x,t)\in M\times [0, T']$ such that $\zeta(\widetilde\Psi(x,t)) < \bar\zeta(R)$.
		The function $\chi$ in \eqref{eq_3.116} is defined by
		\begin{equation}\label{eq_3.117}
			\chi(s) = \frac{1}{2\beta} \left(s -{\sup}_{\mathcal{S}_{R, T '}} |s\circ\widetilde\Psi|\right).
		\end{equation}
		where
		\begin{equation}\label{eq_3.118}
			\mathcal{S}_{R, T'} = {\bigcup}_{t\in [0, T ']} S_{R, t}, \,\, \mbox{ with } \,\, S_{R, t} = \{\widetilde \Psi(x, t): \zeta (\widetilde \Psi(x,t))  < \bar\zeta (R)\}.
		\end{equation}
		Using \eqref{eq_3.27} and denoting $\phi\circ\widetilde\Psi$ simply as $\phi$ one computes
		\begin{equation}\label{eq_3.119}
			\begin{split}
				 \left(\partial_t-\Delta\right)(\eta W)&=W\left(\partial_t-\Delta\right)\eta+\eta\left(\partial_t-\Delta\right)W-2\left\langle\nabla W,\nabla\eta\right\rangle\\
				&=W\left(\partial_t-\Delta\right)\eta-(|A|^2+\overline{\Ric}(N,N))\eta W 
                \\
                &\leq  e^{-Lt}W\left(\partial_t-\Delta\right)\phi.
			\end{split}
		\end{equation}
		However
		\begin{equation}\label{eq_3.120}
			\begin{split}
				& (\partial_t- \Delta)\phi= \left((\partial_t-\Delta)\zeta + \dot \chi(s) (\partial_t-\Delta) s -  \ddot \chi(s) |\nabla s|^2 \right) \phi' \\
				& \,\,-  \left(|\nabla \zeta|^2 + \dot \chi^2(s) |\nabla s|^2 - 2\xi(r) \dot\chi(s) \langle \nabla r, \nabla s\rangle\right)\phi''
			\end{split}
		\end{equation}
		Since $\ddot \chi=0$,  $\phi'>0$ and using \eqref{eq_3.11}, \eqref{eq_3.13} and  \eqref{eq_3.109} one gets
		\begin{equation}\label{eq_3.121}
			\begin{split}
				& (\partial_t - \Delta) \phi \le \phi' \big( n\xi'(r) + n \sigma \xi(r) \langle\bar\nabla r, N\rangle -  \dot \chi(s) (2\langle\bar{\nabla}\log\varrho,N\rangle+n\sigma)\langle\bar{\nabla}s,N\rangle\big)  \\
				& \,\, -  \phi'' \left(|\nabla \zeta|^2 + \dot \chi^2(s) |\nabla s|^2 + 2\xi(r) \dot\chi(s) \langle \bar\nabla r, N\rangle \langle  \bar\nabla s, N\rangle\right)
			\end{split}
		\end{equation} 
		where we used the fact that $\langle \nabla r, \nabla s\rangle = -\langle\bar\nabla r, N\rangle \langle \bar\nabla s, N\rangle$. 
		Note that 
		\begin{equation}\label{eq_3.122}
			n\xi'(r) + n \sigma \xi(r) \langle\bar\nabla r, N\rangle  \le C_R=:\delta.
		\end{equation}
		Since $\phi''\ge 0$ one obtains rearranging terms and discarding a non-negative term that 
		\begin{equation}\label{eq_3.123}
			\begin{split}
				& (\partial_t - \Delta) \phi \le  \delta \phi'+\left(\alpha \phi' -2\xi(r)\langle \bar\nabla r, N\rangle\phi''\right) \dot \chi(s) \langle\bar{\nabla}s,N\rangle  - \dot\chi^2 (s)\phi'' |\nabla s|^2 
			\end{split}
		\end{equation} 
		Note that
		\begin{equation}\label{eq_3.124}
			\langle \bar\nabla s , N\rangle = \frac{1}{\varrho^2}\langle X, N\rangle = \frac{1}{\varrho^2}\frac{1}{W}
		\end{equation}
		and
		\begin{equation}\label{eq_3.125}
			|\nabla s|^2 = |\bar\nabla s|^2 -\langle \bar\nabla s, N\rangle^2 = \frac{1}{\varrho^2}-\frac{1}{\varrho^4}\langle X, N\rangle^2 = 
		\frac{1}{\varrho^2}\left(1-\frac{1}{\varrho^2}\frac{1}{W^2}\right).
		\end{equation}
		Therefore
		\begin{equation}\label{eq_3.126}
			\begin{split}
				& (\partial_t - \Delta) \phi \le  \delta \phi'+ \left(\alpha \phi' + 2\xi(r)\phi''\right) \frac{\dot \chi}{\varrho} \frac{1}{\varrho W}  - \frac{\dot\chi^2 }{\varrho^2}\phi''\left(1-\frac{1}{\varrho^2}\frac{1}{W^2}\right).
			\end{split}
		\end{equation} 
		Rearranging terms, one obtains 
		\begin{equation}\label{eq_3.127}
			\begin{split}
				\left(\partial_t-\Delta\right)\phi \le  \phi' \left(\delta+ \alpha \frac{\dot \chi}{\varrho} \frac{1}{\varrho W}\right)+\phi'' \left(2\xi \frac{\dot \chi}{\varrho} \frac{1}{\varrho W} - \frac{\dot\chi^2 }{\varrho^2}\left(1-\frac{1}{\varrho^2}\frac{1}{W^2}\right)\right).  
			\end{split}
		\end{equation}
		Since $\phi'= \lambda (\phi+1)$, $\phi''  = \lambda^2 (\phi+1)$  and $\dot\chi = 1/2\beta$ we conclude from \eqref{eq_3.118} that in an (interior maximum point) of $\eta W$ one has
		\begin{equation}\label{eq_3.128}
			\lambda \left(\frac{1}{4\beta^2} \frac{1}{\varrho^2} \left(1-\frac{1}{\varrho^2}\frac{1}{W^2}\right) -  2\xi \frac{1}{2\beta}\frac{1}{\varrho} \frac{1}{\varrho W} \right) \le C_R + \alpha \frac{1}{2\beta}\frac{1}{\varrho} \frac{1}{\varrho W}\cdot
		\end{equation}
		Fixing $\lambda$ as in \eqref{eq_3.111} yields
		\begin{equation}
			C_R \varrho^2 W^2  - \bigg(4\beta\,{\rm sup}_{B_R(o)}\xi \frac{{\rm sup}^2_{B_R(o)} \varrho}{{\rm inf}_{B_R(o)}\varrho} +  \frac{\alpha}{2\beta{\rm \inf}_{B_R(o)}\varrho}\bigg) \varrho W
			- 2C_R \le 0
		\end{equation}
		{Hence, solving the quadratic inequality, one obtains}
		\begin{equation}\label{eq_3.129}
			\begin{split}
				&  W(x_0, t_0) \le  \frac{2\beta}{C_R}\,{\rm sup}_{B_R(o)}\xi \frac{{\rm sup}^2_{B_R(o)} \varrho}{{\rm inf}^2_{B_R(o)}\varrho} +  \frac{\alpha}{4\beta C_R \, {\rm \inf}^2_{B_R(o)}\varrho} \\
				& \,\,+ \bigg(2+\bigg( \frac{2\beta}{C_R}\,{\rm sup}_{B_R(o)}\xi \frac{{\rm sup}^2_{B_R(o)} \varrho}{{\rm inf}^2_{B_R(o)}\varrho} +  \frac{\alpha}{4\beta C_R \, {\rm \inf}^2_{B_R(o)}\varrho}\bigg)^2\bigg)^{\frac{1}{2}} =: \mathcal{C}_0 (R).
			\end{split}
		\end{equation}
		at this maximum point. Therefore
		\begin{equation}\label{eq_3.130}
        \begin{split}
			{\rm sup}_{\widetilde\Psi_{t_0} (M)} \eta W &\le \eta (\widetilde\Psi(x_0, t_0)) W(x_0, t_0) \le e^{-Lt_0} e^{\lambda \bar\zeta(R)}W(x_0, t_0) 
            \\
            &\le e^{-Lt_0} e^{\lambda \bar\zeta(R)}\mathcal{C}_0 (R).
        \end{split}
		\end{equation}
		Since the only two possibilities are that either the maximum is attained in $t=0$ or that it is attained in some leaf $\widetilde\Psi_{t_0} (M)$ for some $t_0>0$ (the function vanishes in 
		the parabolic boundary, since $\eta=0$ wherever $\zeta(\widetilde\Psi(x,t)) = \zeta(R)$ for $t\in [0, T' ]$), we conclude that
		\begin{equation}\label{eq_3.131}
			{\sup}_{\mathcal{S}_{R, T '}} \eta W \le {\sup}_{\widetilde\Psi (B_R(o), 0)} \eta W(\cdot, 0) + e^{-Lt_0} e^{\lambda \bar\zeta(R)} \mathcal{C}_0 (R).
		\end{equation}
		{Moreover, since $\xi$ is positive, we have that $\overline{\zeta}$ is increasing. Hence, if $\mathbf{y}\in\overline{\mathcal{S}_{R',T'}}$ then there exists a sequence $\widetilde\Psi(x_n,t_n) \in {\mathcal{S}_{R',T'}}$ such that $\mathbf{y}=\lim\limits_{n\to \infty} \widetilde\Psi(x_n,t_n)$ and 
        \begin{equation}
            \zeta(\mathbf{y})=\lim\limits_{n\to \infty}\zeta(\widetilde\Psi(x_n,t_n)) \leq \overline{\zeta}(R') <\overline{\zeta}(R).
        \end{equation}
        Hence, $\overline{\mathcal{S}_{R',T'}}\subset \mathcal{S}_{R,T'}$ and $\sup_{\overline{\mathcal{S}_{R',T'}}} \eta W\leq \sup_{\mathcal{S}_{R,T'}} \eta W$. We conclude that
        \begin{equation}
            \text{sup}_{\overline{\mathcal{S}_{R',T'}}} \eta\cdot \text{sup}_{\overline{\mathcal{S}_{R',T'}}} W \le  {\sup}_{\widetilde\Psi (B_R(o), 0)} \eta W(\cdot, 0) + e^{-Lt_0} e^{\lambda \bar\zeta(R)} \mathcal{C}_0 (R).
        \end{equation}}
In particular
		\begin{equation}\label{eq_3.132}
			\eta(o,t ) W(o, t) \le e^{\lambda \bar\zeta(R)} \big({\sup}_{\widetilde\Psi (B_R(o), 0)}  W(\cdot, 0) + e^{-Lt_0}  \mathcal{C}_0 (R)\big).
		\end{equation}
		We conclude that
		\begin{equation}\label{eq_3.133}
			\begin{split}
				& W(o,t) \le  \frac{e^{\lambda \bar\zeta(R)}}{e^{\lambda\bar\zeta(R)/2}-1}  e^{LT}\, \big({\sup}_{\widetilde\Psi (B_R(o), 0)}  W(\cdot, 0) + e^{-Lt_0}  \mathcal{C}_0 (R)\big)\\
			\end{split}
		\end{equation}
        This finishes the proof.
	\end{proof}
	The following proposition establishes boundary gradient estimates for the modified mean curvature flow \eqref{MCF-non-par}. Jointly with the height and interior gradient estimates this result allows us to prove the longtime existence of \eqref{MCF-non-par} over geodesic balls in $M$ centered at $o$.
    
        \begin{proposition}\label{prop_4.1}
		Let $u$ be a solution of \eqref{eq_3.8}-\eqref{eq_3.9} in
        $B_R(o) \times[0, T)$ for $R>0$ and $T>0$ with Dirichlet boundary condition $u(x, t) = u_0(x)$ for $(x, t) \in\partial B_R(o) \times [0, T)$. Then there exists a constant $C>0$ such that
		\begin{equation}\label{eq_4.1}
			\sup_{\partial B_R(o) \times[0, T]}|\nabla u| \leq C .
		\end{equation}
	\end{proposition}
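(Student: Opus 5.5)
We will prove the boundary gradient bound with the classical barrier method for quasilinear parabolic Dirichlet problems, in the form used for graphs in \cite{Ladyzhenskaya:68}, \cite{Lieberman:96}. Since $u(\cdot,t)=u_0$ on $\partial B_R(o)$ for every $t$, the tangential part of $\nabla^M u$ on $\partial B_R(o)$ is controlled by $u_0$. It therefore suffices to bound the normal derivative $\partial_r u$ there. We will do this by squeezing $u$ between an upper barrier $u_0+\psi$ and a lower barrier $u_0-\psi$ on a thin collar $\Omega_\delta=\{x\in B_R(o): R-\delta<r(x)<R\}$, for all $t\in[0,T)$. Here $\psi=\psi(d)$ is a function of the distance $d=R-r$ to the boundary, with $\psi(0)=0$. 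Once $u_0-\psi\le u\le u_0+\psi$ holds on $\Omega_\delta\times[0,T)$ with equality on $\partial B_R(o)$, we get $|\partial_r u|\le |\partial_r u_0|+\psi'(0)$ on the boundary, which gives the constant $C$.

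\textbf{Choice of barrier.} We take $\psi(d)=\mu\log(1+kd)$, the standard choice from Serrin-type boundary estimates. The constants $\mu$ and $k$ are fixed below and depend on $R$, $n$, $\sigma$, $\sup_{B_R(o)}|\nabla u_0|_{C^1}$, the bounds on $\varrho$ and $\nabla\varrho$ in $B_R(o)$, and $\xi'/\xi$ near $r=R$. To check the supersolution inequality $\partial_t(u_0+\psi)-\mathcal{Q}[u_0+\psi]\ge 0$ in $\Omega_\delta$, we note that $\partial_t(u_0+\psi)=0$. We then need $\mathcal{Q}[u_0+\psi]\le 0$.

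We write $\mathcal{Q}$ in nondivergence form as $a^{ij}(\nabla u)\nabla_{ij}u+b(x,\nabla u)$, with
\[
a^{ij}=g^{ij}-\frac{u^iu^j}{W^2},\qquad b=\langle\nabla^M\log\varrho,\nabla^M u\rangle-n\sigma W.
\]
Since $\nabla d=-\partial_r$ and $\psi''<0$, the dominant term is $a^{ij}\psi''d_id_j=\psi''(1-(\partial_r u)^2/W^2)$. Near the boundary this is of order $\psi''\varrho^{-2}/W^2$. The cross term $\psi'a^{ij}\nabla_{ij}d=-\psi'a^{ij}\nabla_{ij}r$ is bounded from one side by the Hessian comparison \eqref{hess-comp}. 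That bound is $\psi'(n-1)\xi'(R)/\xi(R)$, up to the Killing contribution controlled by \eqref{cond-3}. The remaining terms, involving $\nabla^2u_0$, $\nabla\log\varrho$ and $n\sigma W$, are at most linear in $W\sim\psi'$.

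Choosing $k$ large makes $|\psi''|/\psi'^{2}=1/\mu$. Then taking $\mu$ large and $\delta$ small makes the negative term dominate, so $\mathcal{Q}[u_0+\psi]\le 0$ in $\Omega_\delta$. On the inner boundary $r=R-\delta$, we need $u_0+\psi\ge u$. This follows from the height estimate of Proposition \ref{height-mcf}, once we arrange $\psi(\delta)=\mu\log(1+k\delta)\ge 2\sup|u|+2\sup|u_0|$, which we can do by increasing $\mu$. At $t=0$ the inequality is trivial because $\psi\ge0$. The comparison principle for the quasilinear operator $\partial_t-\mathcal{Q}$ then gives $u\le u_0+\psi$. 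The lower barrier is handled symmetrically, using $\mathcal{Q}[u_0-\psi]\ge0$.

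\textbf{Main obstacle.} The hard step is the term $-n\sigma W$ together with the mean-curvature-type term $a^{ij}\nabla_{ij}r$. These grow linearly in $\psi'$, and their sign must be absorbed by the concave term $\psi''/W^2\sim -\psi'^{2}/(\mu\psi'^{2})\cdot\psi'$. One has to check that the leading coefficient, of order $-\psi'/\mu\cdot(\text{const})$, beats $n|\sigma|\psi'+(n-1)\psi'\xi'/\xi$. If the logarithmic barrier does not close, the fallback uses the geometry directly. The Killing cylinder over $\partial B_R(o)$ has mean curvature $\frac1n\bigl((n-1)\xi'/\xi+\partial_r\log\varrho\bigr)$, and under \eqref{cond-3} and \eqref{eq_2.13} this mean curvature exceeds $\sigma$. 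This strict mean convexity relative to $\sigma$ is exactly what allows the cylinder to act as a barrier, via the classical Serrin condition. We would then take $\psi$ of the form $\psi(d)=\mu d^{1/2}$-type or $\log$-type with $k$ depending on the gap in \eqref{eq_2.13}.
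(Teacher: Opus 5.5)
You use the same barrier as the paper: its $\widetilde u_0+L^{-1}\log(1+Ad)$ is your $u_0+\psi$ with $\mu=1/L$, $k=A$. You are also right that one must compare on the inner boundary $r=R-\delta$. But the step that is supposed to make $u_0\pm\psi$ a super/subsolution fails, because you have the orders in $\psi'$ wrong. For $\psi=\mu\log(1+kd)$ one has $\psi''=-\psi'^2/\mu$ exactly, for every $k$. Moreover $a^{ij}d_id_j=\bigl(\varrho^{-2}+|\nabla^M u_0|^2-\langle\nabla^M u_0,\nabla^M r\rangle^2\bigr)/W^2$ with $W\simeq\psi'$. So the concave term $a^{ij}\psi''d_id_j$ has size $\mu^{-1}$ and is bounded in $\psi'$; it is not of order $\psi'/\mu$, and it gets weaker, not stronger, as $\mu$ increases. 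Expanding the other terms, using $\nabla^M\nabla^M d(\nabla^M d,\cdot)=0$ and $W=\psi'+O(1)$, gives
\[
\mathcal{Q}[u_0\pm\psi]=\mp\,\psi'\bigl(\Delta_M r+\partial_r\log\varrho\pm n\sigma\bigr)+O(1+\mu^{-1}).
\]
Near $\partial B_R(o)$, where $\psi'$ is large, the sign is therefore dictated by geometry alone. You need $\Delta_M r+\partial_r\log\varrho\ge n|\sigma|$ there, i.e.\ the Killing cylinder over $\partial B_R(o)$ must satisfy $H_{\rm cyl}\ge|\sigma|$, with a margin or a finer argument in the equality case. No choice of $\mu$, $k$, $\delta$ can manufacture this Serrin-type condition.

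Your fallback names the right condition, but it does not follow from the hypotheses. The comparison \eqref{hess-comp} comes from the \emph{lower} curvature bound \eqref{cond-1} and gives $a^{ij}\nabla_{ij}r\le(n-1)\xi'/\xi$. Both barriers instead need a \emph{lower} bound on $a^{ij}\nabla_{ij}r$, which would require an upper radial curvature bound. Similarly, \eqref{eq_2.13} bounds $\sigma$ above by a quantity that itself bounds $H_{\rm cyl}$ above, so it yields neither $H_{\rm cyl}>\sigma$ nor $H_{\rm cyl}>-\sigma$. The latter, needed for $u_0+\psi$, would require a lower bound on $\sigma$ that is nowhere assumed; Proposition \ref{prop_4.1} assumes nothing about $\sigma$ at all.

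A concrete failure: in $\mathbb{R}^{n+1}$ ($\xi=r$, $\varrho=1$) all hypotheses hold and \eqref{eq_2.13} forces $\sigma<0$. For $R>(n-1)/(n|\sigma|)$ the coefficient of $\psi'$ in $\mathcal{Q}[u_0+\psi]$ is $n|\sigma|-(n-1)/r>0$ on a thin collar. Hence $u_0+\psi$ fails to be a supersolution once $\psi'$ exceeds a multiple of $1+\mu^{-1}$. An extra hypothesis such as $\Delta_M r+\partial_r\log\varrho>n|\sigma|$ on $\partial B_R(o)$ is genuinely needed.

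Granting such a hypothesis, your plan closes once the bookkeeping is fixed:
\begin{itemize}
\item The positive cubic term in $W^2(\partial_t v-\mathcal{Q}[v])$ absorbs all $O(\psi'^2)$ errors once $\psi'\ge K_0$ on the collar. This holds if $\delta\le\mu/(2K_0)$ and $k\delta\ge1$.
\item The inner-boundary inequality comes from enlarging $k$, not $\mu$.
\item The height bound you need is the elementary one, $|u|\le\sup_{B_R(o)}|u_0|+n|\sigma|\,t\,\sup_{B_R(o)}\varrho^{-1}$, obtained from spatially constant barriers. Proposition \ref{height-mcf} assumes a rotationally symmetric $\varrho$, which is not available here.
\end{itemize}

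The paper's own computation does not supply the missing ingredient either. It drops the $n\sigma W^3$ term, and keeps $-nB(h')^3$ harmless only by confining $h'$ to $[1,3]$. That forces $h(d_0)\le L^{-1}\log 3$, which leaves no room for the inner-boundary comparison you correctly identified as necessary.
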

	\noindent \emph{Proof.} In order to estimate the gradient of $u(\cdot, t)$ along the boundary $\partial B_R(o)$ we consider a function of the form
	\begin{equation}\label{eq_4.2}
		v(x) =\widetilde u_0 (x) +  h(d(x))
	\end{equation}
	where $d(x) = R-r(x)$ for $x\in B_R(o)$ and $\widetilde u_0$ is a local extension of $u_0$ defined for $d<\varepsilon$ for some $\varepsilon>0$ such that the there are no focal points of $\partial B_R(o)$ for $R-\varepsilon< r < R+\varepsilon$. In what follows we choose a  function $h$ in such a way that $v$ is a supersolution of \eqref{eq_3.8} defined in the neighborhood of $\partial B_R (o)$ given by the points whose geodesic distance to $\partial B_R(o)$ is less than $\varepsilon$. The following calculations are done in that neighborhood. 
	Denoting 
	\begin{equation}\label{eq_4.3}
		W = \sqrt{\varrho^{-2} + |\nabla^M v|^2} = \sqrt{\varrho^{-2}+ h'^2(d) + 2h'(d)\langle\nabla^M d, \nabla^M \widetilde u_0\rangle + |\nabla^M \widetilde u_0|^2 } 
	\end{equation}
	one computes
	\begin{equation}\label{eq_4.4}
		\begin{split}
			& \partial_t v - \mathcal{Q}[v] = \partial_t v - \Delta_M v + \frac{1}{W^2}\langle \nabla^M_{\nabla^M v}\nabla^M v, \nabla^M v\rangle 
            \\
            &-\left(1+\frac{1}{\varrho^2 W^2}\right)\langle \nabla^M \log\varrho, \nabla^M v\rangle+ n\sigma W.
		\end{split}
	\end{equation}
	Using that $|\nabla^M d|=1$ and the definition of $v$, we have
		\begin{align}
		\label{eq_4.8}
				& W^2 (\partial_t v - \mathcal{Q}[v])=- {\varrho^{-2}h''(d)}-{\left(\varrho^{-2}+(h'(d))^2\right)h'(d)\Delta_M d}
				\nonumber\\
				& \,\, - {\left(W^2+\varrho^{-2}\right)\langle \nabla^M \log\varrho, \nabla^M \widetilde{u}_0 + h'(d)\nabla^M d \rangle} \nonumber\\
		& 	\,\,	- (h'(d))^2 \left({\Delta_M \widetilde u_0} + {2| \nabla^M \widetilde{u}_0| |\Delta_M d|}+{| \nabla^M \nabla^M \widetilde{u}_0|} \right)
				\nonumber\\
				&\,\, -h'(d)\left({|\nabla^M \widetilde{u}_0|^2 |\Delta_M d|}+{| \nabla^M \nabla^M  d| |\nabla^M \widetilde{u}_0|^2}\right)
                \\
                &\,\, -h'(d)\left({2 |\nabla^M \widetilde{u}_0 | |\Delta_M\widetilde{u}_0|} + {2| \nabla^M  \nabla^M \widetilde{u}_0| |\nabla^M \widetilde{u}_0|}\right) \nonumber
				\\
				&\,\, - \left(\varrho^{-2}+|\nabla^M \widetilde{u}_0|^2\right){\Delta_M \widetilde{u}_0} - {|\nabla^M \widetilde{u}_0|^2|\nabla^M \nabla^M \widetilde{u}_0|} +{n\sigma W^3}.
		\end{align}
	Note that $|\Delta_M \widetilde{u}_0| \leq \sqrt{n}|\nabla^M \nabla^M \widetilde{u}_0|.$	Hence,
	\begin{align}\label{eq_4.10}
			& W^2 (\partial_t v - \mathcal{Q}[v]) \geq - {\varrho^{-2}h''(d)}-{\left(\varrho^{-2}+(h'(d))^2\right)h'(d)\Delta_M d}
			\nonumber\\
			& \,\, - {\left(W^2+\varrho^{-2}\right)\langle \nabla^M \log\varrho, \nabla^M \widetilde{u}_0 + h'(d)\nabla^M d \rangle}
			\nonumber\\
			&\,\,- C(h'(d))^2 \left(|\nabla^M \widetilde{u}_0 +{| \nabla^M \nabla^M \widetilde{u}_0|} \right)
			\nonumber\\
			&\,\, -Ch'(d)\left(|\nabla^M \widetilde{u}_0|^2 + {| \nabla^M  \nabla^M \widetilde{u}_0| |\nabla^M \widetilde{u}_0|}\right)
			\nonumber\\
			&\,\, - C\left(\varrho^{-2}+|\nabla^M \widetilde{u}_0|^2\right)|\nabla^M \nabla^M \widetilde{u}_0| + {n\sigma W^3}.
	\end{align}
	where $C=2\mathrm{max}(\sqrt{n}+1,\sqrt{n}|\nabla^M \nabla^M d|).${ 
	Moreover, by equations \eqref{eq_3.79} and \eqref{eq_3.80}, and the definition of the function $d$, we have
	\begin{equation}\label{eq_4.11}
		-\left(\Delta_M d + \langle \nabla^M \log \varrho, \nabla^M d\rangle\right) =\Delta_M r+ \langle \nabla^M \log \varrho, \nabla^M r\rangle \geq -nB
	\end{equation}
	where $B=\mathrm{sup}_{(R-\varepsilon, R+\varepsilon)} \frac{\xi'(r)}{\xi(r)}$. Hence, change the constant $C$ contain the negative terms if necessary, we have
	\begin{equation}\label{eq_4.12}
		\begin{split}
			& W^2 (\partial_t v - \mathcal{Q}[v]) \geq - {\varrho^{-2}h''(d)}-h'(d)|\nabla^M \log\varrho|\varrho^{-2}-nB(h'(d))^3
			\\
			&\,\,- C(h'(d))^2 \left(|\nabla^M \log\varrho||\nabla^M \widetilde{u}_0|+|\nabla^M \widetilde{u}_0| +{| \nabla^M \nabla^M \widetilde{u}_0|} \right)
			\\
			&\,\, -Ch'(d)\left(nB\varrho^{-2}+|\nabla^M \log\varrho||\nabla^M \widetilde{u}_0|^2+|\nabla^M \widetilde{u}_0|^2 + {| \nabla^M  \nabla^M \widetilde{u}_0| |\nabla^M \widetilde{u}_0|}\right)
			\\
			&\,\, - C\left(\varrho^{-2}+|\nabla^M \widetilde{u}_0|^2\right)\left(|\nabla^M \log \varrho||\nabla^M \widetilde{u}_0|+|\nabla^M \nabla^M \widetilde{u}_0|\right) + {n\sigma W^3}.
		\end{split}
	\end{equation}
	For $L>0$, fix $d_0<L^{-1}$ and $A=L(1-Ld_0)^{-1}$. If we take $h(d)=L^{-1}\log(1+Ad)$, then 
	\begin{equation}\label{eq_4.13}
		h'(d)=\dfrac{1}{L}\left(\dfrac{A}{1+Ad}\right),\, \, \, \, h''(d)=-L(h'(d))^2
	\end{equation}
	Thence, for all $d<d_0$,
	\begin{equation}\label{eq_4.14}
		\begin{split}
				W^2\left(\partial_t v-\mathcal{Q}[v]\right) & \geq \frac{1}{L}\frac{A}{1+Ad}\left[-nB \frac{1}{L^2}\left(\frac{A^2}{(1+Ad)^2}\right)+\left(\varrho^{-2}-\frac{\overline{C}}{L}\right)\frac{A}{1+Ad}\right. \\
				& -\left.\left(|\nabla^M \log \varrho|\varrho^{-2} + \overline{C}+ \overline{C}L\left(\frac{1+Ad}{A}\right)\right)\right]
		\end{split}
	\end{equation}
	where $\overline{C}=\overline{C}\left(n,B,\varrho,|\nabla^M \log\varrho|, |\nabla^M \widetilde{u}_0|,|\nabla^M\nabla^M \widetilde{u}_0| \right).$ Since 
	\begin{equation}\label{eq_4.15}
		\begin{split}
			-\left(\frac{1+Ad}{A}L\overline{C}\right)\geq -\overline{C},
		\end{split}
	\end{equation}
	we have the following inequality
	\begin{equation}\label{eq_4.16}
	\begin{split}
		\text{\small $W^2\left(\partial_t v-\mathcal{Q}[v]\right) \geq \frac{\overline{C}}{L}\frac{A}{1+Ad}\left[\left(\frac{L}{\varrho^{2}\overline{C}}-1\right)\frac{1}{L}\left(\frac{A}{1+Ad}\right)- \frac{1}{L^2}\left(\frac{A^2}{(1+Ad)^2}\right)-3\right]$.}
	\end{split}
\end{equation}
	Note that
	\begin{equation}\label{eq_4.17}
		\left(\frac{L}{\varrho^2\overline{C}}-1\right)^2-12 > 0
	\end{equation}
	for a chosen $L>5\overline{C}\mathrm{sup}_{B_R(o)} \varrho^{-2}$. Note that the roots of the polynomial 
	\begin{equation}\label{eq_4.18}
		\mathfrak{p}(x)=-x^2+4Lx^2-3L^2
	\end{equation}
	are $L$ and $3L$. Moreover, $\mathfrak{p}(x)\geq 0$ for $x\in [L,3L]$. If $x=A(1+Ad)^{-1}$, then $x\geq L$. If we choose $d_0 \geq 1-3L$, then $x\leq 3L$. We conclude that
	\begin{equation}\label{eq_4.19}
		\partial_t v - \mathcal{Q}[v] \geq 0.
	\end{equation}
    This finishes the proof. \hfill $\square$
\medskip

    Since \eqref{eq_6.2} may be written in local coordinates as a parabolic equation, the \emph{a priori} $C^0$ and $C^1$ bounds in propositions \ref{height-mcf}, \ref{intest} and \ref{prop_4.1} yield the following local existence result.

    \begin{theorem}\label{local-thm}
        Let $M$ be a $n$-dimensional complete, non-compact oriented  Riemannian manifold with a pole $o$ and let  $\bar{M}$ be the warped product $M\times_\varrho\mathbb{R}$ for some positive function $\varrho\in C^\infty(M)$. Suppose that {\eqref{cond-1}}  and {\eqref{cond-3}} hold.  Given any $R>0$ there exists a smooth solution $u: \overline{B_R(o)} \times [0, \infty) \to \mathbb{R}$ of \eqref{eq_3.8}-\eqref{eq_3.9} with Dirichlet boundary condition $u(x, 0) = u_0(x)$ for $x\in \partial B_R(o)$ and initial condition $u(\cdot, 0) = u_0$ 
        for any $\sigma$ satisfying
	\begin{equation}\label{eq_2.13.a}
		\sigma < \frac{1}{n}\inf \left( \frac{|\bar\nabla  \varrho|}{\varrho}+(n-1) \frac{\xi'(r)}{\xi(r)}\right).
        \end{equation}
    \end{theorem}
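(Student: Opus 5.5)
The plan is the standard continuity/Leray--Schauder argument for quasilinear parabolic Dirichlet problems on a compact domain, as in \cite{Ladyzhenskaya:68} and \cite{Lieberman:96}. Fix $R>0$ and regard \eqref{eq_3.8}--\eqref{eq_3.9} on $\overline{B_R(o)}\times[0,T]$, for an arbitrary $T>0$, with data $u=u_0$ on the parabolic boundary. In non-divergence form the operator $\mathcal{Q}$ reads
\[
\mathcal{Q}[u]=\Big(g^{ij}-\frac{u^iu^j}{W^2}\Big)\nabla^M_{ij}u+\Big(1+\frac{1}{\varrho^2W^2}\Big)\langle\nabla^M\log\varrho,\nabla^M u\rangle-n\sigma W ,
\]
which is the expression behind \eqref{eq_4.4}. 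Its coefficients are smooth and it is uniformly parabolic as long as $|\nabla^M u|$ stays bounded. First I would reduce to smooth compatible data. I approximate $u_0$ by smooth functions agreeing with it near $\partial B_R(o)$, and if needed cut off with a time-dependent boundary term so that the zeroth and first order compatibility conditions hold. Short-time existence of a classical solution then follows from the linearization and the implicit function theorem in parabolic H\"older spaces.

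Second, I derive a priori bounds independent of the maximal existence time $T^*<\infty$, which is assumed for contradiction. The height bound comes from Proposition \ref{height-mcf}. In the general case I compare with the model barrier of Proposition \ref{supersolution}, built from $\xi_+=\xi$ and a radial $\varrho_+$. I use \eqref{hess-comp} for $\Delta_M r$ and \eqref{cond-3} for the $\log\varrho$ term, exactly as in \eqref{eq_3.80}--\eqref{eq_3.81}. The restriction \eqref{eq_2.13.a} on $\sigma$ enters here. It makes $H(R(t))-\sigma$ keep a sign compatible with the cap mean curvature compared with the cylinder mean curvature $H_{\rm cyl}$, so the barrier caps $\Sigma_{R(t)}$ exist for all $t$; that is, $A^2-n^2H^2V^2>0$ and $R(t)$ is defined on $[0,\infty)$. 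The result is a $C^0$ bound on $[0,T^*)$. The boundary gradient bound is Proposition \ref{prop_4.1}: the barriers $\widetilde u_0\pm h(d)$ are super- and subsolutions near $\partial B_R(o)$ and coincide with $u$ on $\partial B_R(o)$, so the comparison principle bounds the normal derivative. Combining this with the maximum principle estimate \eqref{eq_3.28} of Proposition \ref{evolutionW} gives a global bound $\sup W\le e^{LT^*}(\sup W(\cdot,0)+\sup_{\partial}W)$. On the compact set $\overline{B_R(o)}$ the bound $\overline{\rm Ric}\ge -L$ holds automatically, and the interior estimate of Proposition \ref{intest} is only needed later for the entire limit.

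Third, with $|u|$ and $|\nabla^M u|$ bounded, the equation is uniformly parabolic with bounded coefficients. The Krylov--Safonov/Ladyzhenskaya--Ural'tseva gradient H\"older estimates (interior and up to the boundary for smooth Dirichlet data, \cite{Lieberman:96}) then give a $C^{1+\alpha,(1+\alpha)/2}$ bound. Schauder theory bootstraps this to bounds on all higher norms up to $t=T^*$, so the solution extends past $T^*$, a contradiction. Hence $T^*=\infty$. Since $T$ was arbitrary, we obtain a solution on $\overline{B_R(o)}\times[0,\infty)$, smooth for $t>0$. If only Lipschitz data are allowed, I would pass to the limit of the smooth approximations using the same bounds, which depend on $u_0$ only through its Lipschitz norm on $B_R(o)$ for the $C^0$ and $C^1$ parts, together with interior-in-time Schauder estimates.

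The main obstacle I expect is the height estimate for general $\varrho$, not necessarily radial. Proposition \ref{supersolution} is stated for $\varrho=\varrho_+(r)$, so I would have to check that the barrier inequality \eqref{eq_3.81} survives using only $\partial_r\log\varrho\le\xi'/\xi$ and the sign $u_+'\le0$. I also have to verify precisely that \eqref{eq_2.13.a} yields global-in-time existence of the shrinking/expanding caps, i.e.\ that $n|H(R)|V<A$ persists along $R(t)$. If this fails, I would fall back on a cruder barrier: a function of $r$ alone with growth in $t$, built from \eqref{eq_3.10}, which gives a time-dependent but finite $C^0$ bound. That still suffices for the continuation argument.
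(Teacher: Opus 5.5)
Your architecture matches the paper's. Its proof of Theorem~\ref{local-thm} is only the remark that the $C^0$ and $C^1$ bounds of Propositions~\ref{height-mcf}, \ref{intest} and \ref{prop_4.1}, together with standard quasilinear parabolic theory, give existence on $\overline{B_R(o)}\times[0,\infty)$. Using \eqref{eq_3.28} instead of Proposition~\ref{intest} is fine on a ball, since the flow of $X$ acts by isometries and so $\overline{\rm Ric}$ is bounded below over $\overline{B_R(o)}\times\mathbb{R}$. However, you have put the restriction \eqref{eq_2.13.a} in the wrong place. The caps $v_R$ of Proposition~\ref{caps} are defined without reference to $\sigma$. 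Keeping $R(t)\ge r_0$ in \eqref{eq_3.68} requires $\dot R=A(R)/V(R)+n\sigma\ge 0$, which is a \emph{lower} bound on $\sigma$. The ``main obstacle'' you anticipate is also not one. Since $\mathcal{Q}[c]=-n\sigma/\varrho$ for every constant $c$, the functions $\pm\big(\sup|u_0|+n|\sigma|\,t\,\sup_{B_R(o)}\varrho^{-1}\big)$ are a super- and a subsolution. They give the time-dependent height bound your continuation argument needs, for any $\varrho$ and any $\sigma$.

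The genuine gap is the boundary gradient step, where you (exactly like the paper) invoke Proposition~\ref{prop_4.1} as if it held for every $\sigma$. Let $H_{\rm cyl}=\frac1n(\Delta_M r+\partial_r\log\varrho)$ be the mean curvature of the Killing cylinder over $\partial B_R(o)$. For $v=\widetilde u_0+h(d)$, the top-order part of $W^2(\partial_t v-\mathcal{Q}[v])$ is $n\,h'^3(H_{\rm cyl}+\sigma)$. The only helpful term, $-\varrho^{-2}h''$, is merely quadratic in $h'$. For $\widetilde u_0-h(d)$ the corresponding term is $-n\,h'^3(H_{\rm cyl}-\sigma)$. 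So these barriers work only under a Serrin-type condition $H_{\rm cyl}\ge|\sigma|$ on $\partial B_R(o)$. Condition \eqref{eq_2.13.a} cannot supply this, for two reasons. First, it is one-sided, whereas $u\mapsto -u$ turns solutions for $\sigma$ into solutions for $-\sigma$. Second, by \eqref{hess-comp} its right-hand side dominates $H_{\rm cyl}$, so it is an upper rather than a lower bound for $H_{\rm cyl}$.

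The step actually fails. In $\mathbb{R}^{n+1}$ ($\varrho\equiv1$, $\xi(r)=r$), \eqref{eq_2.13.a} allows every $\sigma<0$. Take $\sigma<-1/R$ and $u_0(x)=\sqrt{\rho_0^2-|x|^2}-\sqrt{\rho_0^2-R^2}$ with $\rho_0>R$. Then the caps $\sqrt{\rho^2-R^2}-\sqrt{\rho^2-|x|^2}$ with $\dot\rho=-n(|\sigma|-1/R)\big(\rho/\sqrt{\rho^2-R^2}-1\big)^{-1}$ are supersolutions for $-u$. They reach the hemisphere in finite time and force $|\nabla u|\to\infty$ on $\partial B_R$. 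So your continuation argument cannot close under the stated hypotheses. You need an additional assumption such as $H_{\rm cyl}\ge|\sigma|$ on $\partial B_R(o)$, which amounts to a lower bound on $\Delta_M r+\partial_r\log\varrho$ that \eqref{cond-1} does not give. You must then verify the barrier under it, including the comparison on the inner boundary $\{d=d_0\}$ via the height bound. This defect is inherited from the paper's one-line proof rather than introduced by you.
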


    \section{Curvature estimates}\label{C2-est}

	In order to obtain second order bounds we need to deduce evolution equations for the second fundamental form and its squared norm, a parabolic counterpart  of the classical Simons' formula.

	\begin{lemma} \label{evol|A|}
		The squared norm $|A|^2$ of the second fundamental form of  $\Sigma_t$, $t\in [0,T]$, evolve as
		\begin{equation}
			\label{eq_5.1}
			\begin{split}
				&  \frac{1}{2} (\partial_t-\Delta) |A|^2 + |\nabla A|^2  =-n\sigma a_i^sa_{sj} a^{ij} +|A|^4 +n(H-\sigma)a^{ij}\bar{R}_{i00j}\\
				& \,\,+g^{k\ell}\left(\nabla_iL_{kj\ell}+\nabla_kL_{\ell ij}\right)a^{ij}+g^{k\ell} (a_{is}  \bar R^s_{kj\ell} +  a_{sk}  \bar R^s_{\ell ij}) a^{ij}
			\end{split}
		\end{equation}
		where $L$ is the $(0,3)$-tensor  in $\Sigma_t$ defined by $L_{ijk}=\langle\bar{R}(\partial_i,\partial_j)N,\partial_k\rangle$. This expression is rewritten in terms of the ambient curvature tensor as
		\begin{equation}
			\label{eq_5.2}
			\begin{split}
				& \frac{1}{2} (\partial_t-\Delta) |A|^2 + |\nabla A|^2  =-n\sigma ( a_i^sa_{sj} +\bar{R}_{i00j} ) a^{ij} +|A|^4  + |A|^2 \overline{{\rm Ric}}(N,N)
				\\
				& \,\,+g^{k\ell}(\bar\nabla_{i} \bar R_{kj0\ell} + \bar\nabla_k \bar R_{\ell i0j})a^{ij}  + 2  g^{k\ell} (a_{is}  \bar R^s_{kj\ell} +  a_{sk}  \bar R^s_{\ell ij}) a^{ij}.
			\end{split}
		\end{equation}
	\end{lemma}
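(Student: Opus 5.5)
The plan follows the classical route for Simons-type evolution equations (Huisken's computation for the mean curvature flow in Riemannian ambients), adapted to the speed $n(H-\sigma)$. Throughout, index $0$ denotes the normal $N$. I would work at a point in normal coordinates for the induced metric $g_{ij}(t)$ on $\Sigma_t$, with $a_{ij}=\langle\bar\nabla_{\partial_i}\partial_j,N\rangle$.

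\textbf{Evolution of metric and second fundamental form.} Write the normal velocity as $f=n(H-\sigma)$; the tangential part of $\partial_t\widetilde\Psi$ only contributes Lie-derivative terms, which do not change the scalar $|A|^2$ up to a reparametrization. The first-variation formulas then give
\[
\partial_t g_{ij}=-2fa_{ij},\qquad
\partial_t a_{ij}=\nabla_i\nabla_j f - f\,a_{is}a^s_j+f\,\bar R_{i00j}.
\]
Sign conventions must be matched to those used for $\langle\bar\nabla_X\bar\nabla r,X\rangle$ earlier. Because $\sigma$ is constant, $\nabla_i\nabla_j f=n\nabla_i\nabla_jH$.

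\textbf{Simons' identity.} Next I would commute derivatives to express $n\nabla_i\nabla_jH=g^{k\ell}\nabla_i\nabla_j a_{k\ell}$ through $\Delta a_{ij}$. The Codazzi equation $\nabla_ia_{jk}-\nabla_ja_{ik}=L_{ijk}$ (up to sign, with $L$ as in the statement) is applied twice. The Ricci identity for $\Sigma_t$ produces intrinsic curvature terms, and the Gauss equation rewrites these as ambient curvature plus quadratic terms in $A$. The outcome is
\[
\Delta a_{ij}= n\nabla_i\nabla_j H + g^{k\ell}(\nabla_iL_{kj\ell}+\nabla_kL_{\ell ij}) + nH a_{is}a^s_j - |A|^2a_{ij} + g^{k\ell}(a_{is}\bar R^s_{kj\ell}+a_{sk}\bar R^s_{\ell ij}).
\]
Substituting this into $\partial_t a_{ij}$ eliminates $\nabla\nabla H$.

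\textbf{Assembling $|A|^2$.} Then I compute $\partial_t|A|^2=2a^{ij}\partial_ta_{ij}+4f\,a_{is}a^s_ja^{ij}$, using the evolution of $g^{ij}$, and $\Delta|A|^2=2a^{ij}\Delta a_{ij}+2|\nabla A|^2$. In the resulting cubic terms the $nH\,\mathrm{tr}A^3$ contributions cancel, leaving $-n\sigma\,\mathrm{tr}A^3$ together with $|A|^4$. The term $fa^{ij}\bar R_{i00j}$ survives and is exactly $n(H-\sigma)a^{ij}\bar R_{i00j}$. This yields \eqref{eq_5.1}.

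\textbf{From \eqref{eq_5.1} to \eqref{eq_5.2}.} For the second form, I express $\nabla_iL_{kj\ell}$ via ambient covariant derivatives:
\[
\nabla_iL_{kj\ell}=\bar\nabla_i\bar R_{kj0\ell}+\text{terms }a_i^s\bar R_{kjs\ell}-a_{ik}\bar R_{0j0\ell}-\cdots
\]
Contracting with $g^{k\ell}a^{ij}$ turns the $\bar R_{\cdot0 0\cdot}$ contributions into $|A|^2\overline{\rm Ric}(N,N)$ and $-nHa^{ij}\bar R_{i00j}$. The latter cancels the $nH$ part of $n(H-\sigma)a^{ij}\bar R_{i00j}$, while the tangential curvature terms double the last bracket.

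\textbf{Main obstacle.} The main obstacle is bookkeeping of signs and index conventions, in particular matching the paper's curvature sign and showing that the $nH$ pieces cancel exactly. I would check the result against the Euclidean case ($\bar R=0$, $\sigma=0$), which must reduce to Huisken's identity $\tfrac12(\partial_t-\Delta)|A|^2+|\nabla A|^2=|A|^4$.
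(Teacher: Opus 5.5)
Your route is the same as the paper's: evolve $g^{ij}$ and $a_{ij}$ as in \eqref{eq_5.3}--\eqref{eq_5.4}, use a Simons identity to remove $\nabla_i\nabla_jH$, cancel the $nH\,a^{ij}a_i^sa_{sj}$ terms, and expand $\nabla L$ by Gauss--Weingarten as in \eqref{eq_5.10}--\eqref{eq_5.12}. The problem is the Simons identity you display: both of its curvature terms have the wrong sign.

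Work in the conventions where the statement holds, which are also the ones where your formula for $\partial_t a_{ij}$ holds. These are $\bar R(X,Y)=[\bar\nabla_X,\bar\nabla_Y]-\bar\nabla_{[X,Y]}$, $\bar R_{abcd}=\langle\bar R(e_a,e_b)e_c,e_d\rangle$ and $\bar R^s_{kj\ell}=g^{sm}\bar R_{kj\ell m}$. Then $L_{ijk}=\bar R_{ij0k}$, $g^{ij}\bar R_{i00j}=\overline{\mathrm{Ric}}(N,N)$, and Codazzi reads $\nabla_ia_{jk}-\nabla_ja_{ik}=\bar R_{ijk0}=-L_{ijk}$. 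The correct identity is the paper's \eqref{eq_5.7}:
\[
\Delta a_{ij}=n\nabla_i\nabla_jH+nHa_i^sa_{sj}-|A|^2a_{ij}-g^{k\ell}(\nabla_iL_{kj\ell}+\nabla_kL_{\ell ij})+g^{k\ell}(\bar R^s_{ik\ell}a_{sj}+\bar R^s_{ikj}a_{\ell s}).
\]
After contraction with $a^{ij}$, its last term equals $-g^{k\ell}(a_{is}\bar R^s_{kj\ell}+a_{sk}\bar R^s_{\ell ij})a^{ij}$. To see this, relabel $i\leftrightarrow j$ in the first summand and $k\leftrightarrow\ell$ in the second, and use $\bar R^s_{kj\ell}=-\bar R^s_{jk\ell}$; this is \eqref{eq_5.13}. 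If you substitute your version instead, you get \eqref{eq_5.1} with its last two terms negated. So ``This yields \eqref{eq_5.1}'' does not follow from what you wrote.

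The sign is not cosmetic downstream either. The correct expansion is $\nabla_iL_{kj\ell}=\bar\nabla_i\bar R_{kj0\ell}+a_{ik}\bar R_{0j0\ell}+a_{ij}\bar R_{k00\ell}+a_{is}\bar R^s_{kj\ell}$; your schematic version has the last three signs reversed. Your final paragraph needs the $L$-term to enter \eqref{eq_5.1} with a plus sign: that is what makes $nH\,a^{ij}\bar R_{i00j}$ cancel and produces $+|A|^2\overline{\mathrm{Ric}}(N,N)$. With the sign your Simons identity produces, you would instead get $n(2H-\sigma)a^{ij}\bar R_{i00j}$ and $-|A|^2\overline{\mathrm{Ric}}(N,N)$.

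Your Euclidean sanity check cannot catch this, because every term whose sign is in question vanishes when $\bar R=0$. Umbilic spheres in space forms miss it too. A test that works is Simons' formula for a minimal hypersurface of the unit sphere $\mathbb{S}^{n+1}$. There $L=0$ and $\bar R^s_{kj\ell}=\delta^s_kg_{j\ell}-g_{k\ell}\delta^s_j$. The correct identity gives $a^{ij}\Delta a_{ij}=n|A|^2-|A|^4$, while yours gives $-n|A|^2-|A|^4$. Once the identity is corrected, the rest of your outline goes through exactly as in the paper.
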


	\begin{proof} We have
		\begin{equation}\label{eq_5.3}
			\partial_ta_{ij}= n \nabla_i \nabla_j H - n(H-\sigma) a_{is} a^s_j+n(H-\sigma)\bar{R}_{i00j}
		\end{equation}
		Since
		\begin{equation}\label{eq_5.4}
			\partial_tg^{ij}=2n(H-\sigma)a^{ij}
		\end{equation}
		we have
		\begin{equation}\label{eq_5.5}
			\begin{split}
				& \frac{1}{2}\partial_t|A|^2 = g^{j\ell} a_{ij} a_{k\ell}  \partial_t g^{ik} + 
				g^{ik} g^{j\ell} a_{k\ell}\partial_t a_{ij} = 2n(H-\sigma)a^{ik} a_{i}^\ell a_{k\ell}  \\
				& \,\, +  a^{ij}  (n \nabla_i \nabla_j H - n(H-\sigma) a_{i\ell} a^\ell_j+n(H-\sigma)\bar{R}_{i00j}).
			\end{split}
		\end{equation}
		We conclude that
		\begin{equation}\label{eq_5.6}
			\frac{1}{2}\partial_t|A|^2 = n(H-\sigma)a^{ik} a_{i}^\ell a_{k\ell}  +    n a^{ij} \nabla_i \nabla_j H +n(H-\sigma)a^{ij}\bar{R}_{i00j}.
		\end{equation}
		On the other hand
		\begin{equation}\label{eq_5.7}
        \begin{split}
			\Delta a_{ij}&=n\nabla_i\nabla_jH+nHa_i^sa_{sj}-a_{ij}|A|^2-g^{k\ell}\left(\nabla_iL_{kj\ell}+\nabla_kL_{\ell ij}\right)
            \\
            &+g^{k\ell}(\bar{R}^s_{ik\ell }a_{sj}+\bar{R}^s_{ikj}a_{\ell s})
        \end{split}
		\end{equation}
		and
		\begin{equation}
			\label{eq_5.8}
			\begin{split}
				& \frac{1}{2}\Delta |A|^2 - |\nabla A|^2 = a^{ij} \Delta a_{ij} =n a^{ij}\nabla_i\nabla_jH+nHa_i^sa_{sj} a^{ij}-|A|^4\\
				& \,\,-g^{k\ell}\left(\nabla_iL_{kj\ell}+\nabla_kL_{\ell ij}\right)a^{ij}+g^{k\ell}(\bar{R}^s_{ik\ell }a_{sj}+\bar{R}^s_{ikj}a_{\ell s}) a^{ij}.
			\end{split}
		\end{equation}
		Therefore
		\begin{equation}
			\label{eq_5.9}
			\begin{split}
				& \frac{1}{2} (\partial_t-\Delta) |A|^2 + |\nabla A|^2  =-n\sigma a_i^sa_{sj} a^{ij} +|A|^4 +n(H-\sigma)a^{ij}\bar{R}_{i00j}\\
				& \,\,+g^{k\ell}\left(\nabla_iL_{kj\ell}+\nabla_kL_{\ell ij}\right)a^{ij}-g^{k\ell}(\bar{R}^s_{ik\ell }a_{sj}+\bar{R}^s_{ikj}a_{\ell s}) a^{ij}.
			\end{split}
		\end{equation}
		It is worth to point out that
		\begin{equation}\label{eq_5.10}
			\begin{split}
			& \nabla_i L_{kj\ell} + \nabla_k L_{\ell ij} = \bar\nabla_{i} \bar R_{kj0\ell} + \bar\nabla_k \bar R_{\ell i0j} + a_{ik} \bar R_{0j0\ell} + a_{ij} \bar R_{k00\ell} + a_{is} \bar R^s_{kj\ell}\\
			& \,\, + a_{k\ell} \bar R_{0i0j} + a_{ki} \bar R_{\ell00 j} + a_{ks} \bar R^s_{\ell ij}.
			\end{split}
		\end{equation}
		Hence,
		\begin{equation}\label{eq_5.11}
			\begin{split}
			&g^{k\ell}(\nabla_i L_{kj\ell} + \nabla_k L_{\ell ij})a^{ij} =
            \\
            &g^{k\ell}(\bar\nabla_{i} \bar R_{kj0\ell} + \bar\nabla_k \bar R_{\ell i0j})a^{ij} - a_{i}^\ell a^{ij} \bar R_{j00\ell} + |A|^2 \overline{{\rm Ric}}(N,N) \\
			&\quad + a_{is} a^{ij} g^{k\ell} \bar R^s_{kj\ell}- nH a^{ij} \bar R_{i00j} + a_{i}^\ell a^{ij} \bar R_{\ell00 j} + g^{k\ell} a^{ij} a_{sk}  \bar R^s_{\ell ij}.
			\end{split}
		\end{equation}
		Cancelling and grouping some terms one gets
		\begin{equation}\label{eq_5.12}
			\begin{split}
			&g^{k\ell}(\nabla_i L_{kj\ell} + \nabla_k L_{\ell ij})a^{ij} = g^{k\ell}(\bar\nabla_{i} \bar R_{kj0\ell} + \bar\nabla_k \bar R_{\ell i0j})a^{ij}  + |A|^2 \overline{{\rm Ric}}(N,N) \\
			&\,\,  + a_{is} a^{ij} g^{k\ell} \bar R^s_{kj\ell}- nH a^{ij} \bar R_{i00j}  + g^{k\ell} a^{ij} a_{sk} \bar R^s_{\ell ij}.
			\end{split}
		\end{equation}
		Since
		\begin{equation}\label{eq_5.13}
			-g^{k\ell}(\bar{R}^s_{ik\ell }a_{sj}+\bar{R}^s_{ikj}a_{\ell s}) a^{ij} =  a^{ij} g^{k\ell} (a_{is}  \bar R^s_{kj\ell} +  a_{sk}  \bar R^s_{\ell ij})
		\end{equation}
		we conclude that
		\begin{equation}
			\label{eq_5.14}
			\begin{split}
				& \frac{1}{2} (\partial_t-\Delta) |A|^2 + |\nabla A|^2  =-n\sigma ( a_i^sa_{sj} +\bar{R}_{i00j} ) a^{ij} +|A|^4  + |A|^2 \overline{{\rm Ric}}(N,N)
				\\
				& \,\,+g^{k\ell}(\bar\nabla_{i} \bar R_{kj0\ell} + \bar\nabla_k \bar R_{\ell i0j})a^{ij}  + 2  g^{k\ell} (a_{is}  \bar R^s_{kj\ell} +  a_{sk}  \bar R^s_{\ell ij}) a^{ij}.
			\end{split}
		\end{equation}
		This finishes the proof.
	\end{proof}

	\vspace{3mm}
	
	\noindent Given $R>0$ and $T' \in (0, T)$ we are going to estimate $|A|$ in  the set
	\begin{equation}\label{eq_5.15}
		\mathcal{U}_{R, T'} = {\bigcup}_{t\in [0, T ' ]} U_{R,t}
	\end{equation}
	with
	\begin{equation}\label{eq_5.16}
		U_{R,t} = \big\{ y = \widetilde\Psi(x,t) : \zeta(\widetilde\Psi(x,t)) + C_R t \le \zeta(R)\big\}, \quad t\in [0, T ']. 
	\end{equation}
	In order to do this, we suppose that $W>0$ in $U_{R,t}$ and 
	we  will proceed as in \cite{Borisenko:12} studying the evolution of  the function
	\begin{equation}\label{eq_5.17}
		f=\psi(W)|A|^2,
	\end{equation}
	where 
	\begin{equation}\label{eq_5.18}
		\psi(W)=\frac{W^2}{\gamma-\delta W^2}
	\end{equation}
	with 
	\begin{equation}\label{eq_5.19}
	\gamma = \frac{1}{{\sup}_{B_{R}(o)} \varrho^2}
	\end{equation}
	and
	\begin{equation}\label{eq_5.20}
	\delta = \frac{1}{2}\frac{\gamma}{{\sup}_{B_{R'(o)}\times [0, T] } W^2 }
	\end{equation}
	for $R' \in (0, R)$ such that 
	\begin{equation}
		\label{eq_5.21}
		\zeta(r) + C_R T ' \le \zeta(R)/2 
	\end{equation}
	for $r<R'$. Therefore
	\begin{equation}\label{eq_5.22}
		\delta \psi(W) \le \frac{\gamma/2}{\gamma-\gamma/2} = 1.
	\end{equation}
	Since $\delta\psi$ is non-decreasing and $W^2\ge \varrho^{-2}$ we have
	\begin{equation}\label{eq_5.23}
		\delta \psi(W) =\frac{\delta W^2}{\gamma-\delta W^2} \ge \frac{\delta/\varrho^2}{\gamma-\delta/\varrho^2} \ge \frac{\delta}{\gamma\, {\sup}_{\mathcal{C}_R}\varrho^2 -\delta} = \frac{\delta}{1-\delta}= :\widetilde\delta.
	\end{equation}
	We also have
	\begin{equation}\label{eq_5.24}
		-\frac{2}{W}\frac{1}{\psi'(W)}-\frac{\psi''(W)}{\psi'^2(W)}+\frac{3}{2}\frac{1}{\psi(W)}<0. 
	\end{equation}
	In fact, it holds that
	\begin{equation}\label{eq_5.25}
		\begin{split}
			&  -\frac{2}{W}\frac{1}{\psi'(W)}-\frac{\psi''(W)}{\psi'^2(W)} = - \frac{(\gamma-\delta W^2)^2}{\gamma W^2}-\frac{(2\gamma^2+6\gamma\delta W^2)}{(\gamma-\delta W^2)^3}\frac{(\gamma-\delta W^2)^4}{4\gamma^2 W^2}\\
			&\,\, = - \frac{\gamma-\delta W^2}{\gamma W^2}\left(\gamma-\delta W^2+ \frac{2\gamma^2+6\gamma\delta W^2}{4\gamma}\right) =  - \frac{\gamma-\delta W^2}{\gamma W^2}\left(\frac{3}{2}\gamma+\frac{1}{2}\delta W^2\right).
		\end{split}
	\end{equation}
	Therefore
	\begin{equation}\label{eq_5.26}
		\begin{split}
		&\text{\small $-\frac{2}{W}\frac{1}{\psi'(W)}-\frac{\psi''(W)}{\psi'^2(W)} +\frac{3}{2}\frac{1}{\psi(W)}$} =    - \frac{\gamma-\delta W^2}{W^2}\left(\frac{3}{2}+\frac{1}{2}\frac{1}{\gamma}\delta W^2\right)
		+ \frac{3}{2}\frac{\gamma-\delta W^2}{W^2}\\
		&\,\, = -\frac{\delta}{2\gamma} (\gamma-\delta W^2)\le 0.
		\end{split}
	\end{equation}
	
	\begin{lemma} We have in $\mathcal{U}_{R,T ' }$ that 
		\begin{equation}\label{eq_5.27}
			\left(\partial_t-\Delta\right)f\leq -\frac{1}{\psi}\langle\nabla f,\nabla\psi\rangle-af^2+bf+2(C+n\sigma |\bar R (\cdot, N, N, \cdot)|)\sqrt\psi \sqrt f 
		\end{equation}
		where $a=2\delta -\sigma \varepsilon(1-\delta)>0$ and $b = 2(\widetilde C +\widetilde \delta L)+\frac{\sigma}{\varepsilon}$. Here, $C$ and $\widetilde C$ are non-negative constants depending on $\varrho$ and its derivatives.
	\end{lemma}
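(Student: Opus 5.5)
The plan is to follow the Ecker–Huisken / Borisenko–Miquel scheme and compute $(\partial_t-\Delta)f$ for $f=\psi(W)|A|^2$ by the product rule:
\[
(\partial_t-\Delta)f=\psi\,(\partial_t-\Delta)|A|^2+|A|^2(\partial_t-\Delta)\psi-2\langle\nabla\psi,\nabla|A|^2\rangle .
\]
The first term comes from Lemma \ref{evol|A|} in the form \eqref{eq_5.2}. For the second, write $(\partial_t-\Delta)\psi=\psi'(\partial_t-\Delta)W-\psi''|\nabla W|^2$ and insert Proposition \ref{evolutionW}. This gives
\[
(\partial_t-\Delta)\psi=-\psi' W\big(|A|^2+\overline{\Ric}(N,N)\big)-\Big(\frac{2\psi'}{W}+\psi''\Big)|\nabla W|^2 .
\]
The zeroth-order $|A|^4$ contributions are then $2\psi|A|^4$ from \eqref{eq_5.2} and $-\psi'W|A|^4$ from $\psi$. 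Since $\psi'W=2\gamma W^2/(\gamma-\delta W^2)^2=2\psi+2\delta\psi^2$, they combine to
\[
2\psi|A|^4-\psi'W|A|^4=-2\delta\psi^2|A|^4=-2\delta f^2 .
\]
This is the source of the $2\delta$ in the constant $a$.

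Next I treat the gradient terms as in \cite{Ecker:89}. Split the cross term $-2\langle\nabla\psi,\nabla|A|^2\rangle$ into halves. One half is rewritten as $-\frac{1}{\psi}\langle\nabla f,\nabla\psi\rangle+\frac{|\nabla\psi|^2}{\psi^2}f$. The other half is bounded using Kato's inequality $|\nabla|A||\le|\nabla A|$ and Young's inequality, so that it is absorbed by $-2\psi|\nabla A|^2$ with a remainder $\frac{3}{2}\frac{|\nabla\psi|^2}{\psi}|A|^2$. Writing $|\nabla\psi|^2=\psi'^2|\nabla W|^2$, the total coefficient of $|\nabla W|^2|A|^2$ becomes
\[
\psi'^2\Big(-\frac{2}{W\psi'}-\frac{\psi''}{\psi'^2}+\frac{3}{2}\frac{1}{\psi}\Big)\le0
\]
(possibly with a slightly different numerical split). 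By \eqref{eq_5.24}--\eqref{eq_5.26} this is nonpositive, so it can be discarded, leaving exactly the drift term $-\frac1\psi\langle\nabla f,\nabla\psi\rangle$ in \eqref{eq_5.27}.

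The remaining terms are handled one by one.
\begin{itemize}
\item The Ricci terms $2\psi|A|^2\overline{\Ric}(N,N)-\psi'W|A|^2\overline{\Ric}(N,N)$ combine to $-2\delta\psi\,f\,\overline{\Ric}(N,N)$. Using $\overline{\Ric}\ge-L$ and $\delta\psi\le1$ (or $\widetilde\delta$ from \eqref{eq_5.23}), they are bounded by a multiple of $Lf$, which produces the $\widetilde\delta L$ contribution to $b$.
\item The curvature terms $2\psi g^{k\ell}(a_{is}\bar R^s_{kj\ell}+\dots)a^{ij}$ are at most $\widetilde C f$, with $\widetilde C$ bounding $|\bar R|$ on the compact region $\mathcal U_{R,T'}$; that bound depends on $\varrho$ and its derivatives through the warped metric.
\item The derivative terms $2\psi\,\bar\nabla\bar R\ast A$ are at most $2C\psi|A|=2C\sqrt\psi\sqrt f$.
\item The term $-2n\sigma\psi\bar R_{i00j}a^{ij}$ is at most $2n\sigma|\bar R(\cdot,N,N,\cdot)|\sqrt\psi\sqrt f$.
\item The cubic term $-2n\sigma\psi\,\mathrm{tr}A^3$ is at most $2n|\sigma|\psi|A|^3$. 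By Young's inequality this is at most $\sigma\varepsilon\,\psi^2|A|^4\cdot(\text{const})+\frac{\sigma}{\varepsilon}\psi|A|^2$, and the factor $(1-\delta)$ should come from converting via \eqref{eq_5.22}--\eqref{eq_5.23}. This yields the $-\sigma\varepsilon(1-\delta)$ correction in $a$ and the $\frac{\sigma}{\varepsilon}$ contribution in $b$.
\end{itemize}
Finally I choose $\varepsilon$ small so that $a>0$.

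The main obstacle I expect is the bookkeeping of constants. Two points in particular need care: the gradient-term absorption must close exactly with \eqref{eq_5.26}, and the cubic $\sigma$ term must be controlled by $f^2$ with the stated coefficient. If the naive Young split leaves a stray $\psi$ power, I would instead use the bounds $\widetilde\delta\le\delta\psi\le1$ to trade $\psi|A|^3$ for a combination of $f^2$ and $f$.
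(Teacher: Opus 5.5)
Your proposal is correct and follows the paper's proof essentially step for step. It uses the same product rule, the same Kato--Young treatment of $-2\langle\nabla\psi,\nabla|A|^2\rangle$ leaving $\tfrac32\psi^{-1}|\nabla\psi|^2|A|^2$ (discarded via \eqref{eq_5.24}--\eqref{eq_5.26}), the identity $2\psi-W\psi'=-2\delta\psi^2$ for the quartic and Ricci terms, and Young's inequality on $\operatorname{tr}A^3$, where the stray power is handled by $\psi|A|^4=f^2/\psi\le(1-\delta)f^2$, i.e.\ exactly $\delta\psi\ge\widetilde\delta$ from \eqref{eq_5.23}.

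One bookkeeping remark: your bound $\delta\psi\le 1$ on the Ricci term gives $2Lf$, not $2\widetilde\delta Lf$. The paper's appeal to $\delta\psi\ge\widetilde\delta$ at that step runs in the wrong direction, so what is actually justified is $b=2(\widetilde C+L)+\sigma/\varepsilon$, which is all that is used later via \eqref{eq_5.68}.
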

	\begin{proof}
		Mimicking Lemma 8 in \cite{Borisenko:12}, we write the evolution of $f$ is terms of \eqref{eq_3.27} and \eqref{eq_5.1} as follows 
		\begin{equation}\label{eq_5.28}
			\begin{split}
				\left(\partial_t-\Delta\right)f
				&=|A|^2\psi'(W)\left(\partial_t-\Delta\right)W-|A|^2\psi''(W)|\nabla W|^2
                \\
                &+\psi(W)\left(\partial_t-\Delta\right)|A|^2\,\,-2\langle\nabla\psi,\nabla|A|^2\rangle.
			\end{split}
		\end{equation}
		However
		\begin{equation}\label{eq_5.29}
			\begin{split}
				&2\langle \nabla \psi, \nabla |A|^2\rangle = 	\frac{1}{\psi}\langle \nabla\psi, \nabla f\rangle -\frac{1}{\psi} |\nabla \psi|^2 |A|^2 + \langle \nabla \psi, \nabla |A|^2\rangle\\
				&\,\, \ge \frac{1}{\psi}\langle \nabla\psi, \nabla f\rangle -\frac{1}{\psi} |\nabla \psi|^2 |A|^2  -   \frac{1}{2\psi}|A|^2 |\nabla \psi|^2 -   2\psi |\nabla |A||^2.
			\end{split}
		\end{equation}
		Using  Kato's inequality $|\nabla|A||^2\leq |\nabla A|^2$ we conclude that
		\begin{equation}\label{eq_5.30}
			-2\langle\nabla\psi,\nabla|A|^2\rangle\leq-\frac{1}{\psi}\langle\nabla f,\nabla\psi\rangle+2\psi|\nabla A|^2+\frac{3}{2}\frac{1}{\psi}|A|^2|\nabla\psi|^2.
		\end{equation}
		Hence, expressions  \eqref{eq_3.27} and \eqref{eq_5.2} yield
		\begin{equation}\label{eq_5.31}
			\begin{split}
				& \left(\partial_t-\Delta\right)f\leq -|A|^2\psi'(W)\left(W(|A|^2+\overline{\Ric}(N,N))+2W^{-1}|\nabla W|^2\right)\\
				& \,\, +2\psi (W) \big(-|\nabla A|^2  -n\sigma ( a_i^sa_{sj} +\bar{R}_{i00j} ) a^{ij} +|A|^4  + |A|^2 \overline{{\rm Ric}}(N,N)
				\\
				& \,\,+g^{k\ell}(\bar\nabla_{i} \bar R_{kj0\ell} + \bar\nabla_k \bar R_{\ell i0j})a^{ij}  + 2  g^{k\ell} (a_{is}  \bar R^s_{kj\ell} +  a_{sk}  \bar R^s_{\ell ij}) a^{ij}\big)\\
				& \,\, 
				-|A|^2\psi''(W)|\nabla W|^2-\frac{1}{\psi}\langle\nabla f,\nabla\psi\rangle+2\psi(W)|\nabla A|^2+\frac{3}{2}\frac{1}{\psi}|A|^2|\nabla\psi|^2
			\end{split}
		\end{equation}
		where $'$ denotes derivatives with respect to $W$. Grouping similar terms, one obtains
		\begin{equation}\label{eq_5.32}
			\begin{split}
				& \left(\partial_t-\Delta\right)f\leq  (|A|^4 + |A|^2   \overline{{\rm Ric}}(N,N)) (2\psi(W)-\psi'(W)W) \\
				& \,\, + 2\psi(W)\big(-n\sigma ( a_i^sa_{sj} +\bar{R}_{i00j} ) a^{ij}  
				+g^{k\ell}(\bar\nabla_{i} \bar R_{kj0\ell} + \bar\nabla_k \bar R_{\ell i0j})a^{ij} \\
				& \,\, + 2  g^{k\ell} (a_{is}  \bar R^s_{kj\ell} +  a_{sk}  \bar R^s_{\ell ij}) a^{ij}\big)-\frac{1}{\psi}\langle\nabla f,\nabla\psi\rangle\\
				& \,\, - 2 |A|^2\frac{\psi'(W)}{W}|\nabla W|^2 - |A|^2\psi''(W)|\nabla W|^2 +\frac{3}{2}\frac{\psi'^2(W)}{\psi(W)}|A|^2|\nabla W|^2.
			\end{split}
		\end{equation}
		A straightforward but lengthy computation allows us to verify that 
		\begin{equation}\label{eq_5.33}
			\begin{split}
				g^{k\ell}(\bar\nabla_{i} \bar R_{kj0\ell} + \bar\nabla_k \bar R_{\ell i0j})a^{ij}  + 2  g^{k\ell} (a_{is}  \bar R^s_{kj\ell} +  a_{sk}  \bar R^s_{\ell ij}) a^{ij}
				\leq C |A|+\widetilde C |A|^2
			\end{split}
		\end{equation}
		where the constants $C$ and $\widetilde C $ depends on $\varrho$ and its derivatives and on the curvature of $M$. More explicitly, we have
		\begin{equation}\label{eq_5.34}
			C = C\left(\frac{|\bar\nabla \varrho|}{\varrho}, \frac{|\bar\nabla^2\varrho|}{\varrho},  \frac{|\bar\nabla^3 \varrho|}{\varrho}, |\nabla^M R^M|\right)
		\end{equation}
		and
		\begin{equation}\label{eq_5.35}
			\widetilde C = \widetilde C\left(\frac{|\bar\nabla \varrho|}{\varrho}, \frac{|\bar\nabla^2\varrho|}{\varrho},  \frac{|\bar\nabla^3 \varrho|}{\varrho}, |R^M|\right),
		\end{equation}
		where $R^M$ is the Riemann curvature tensor in $(M, g)$.  Observing that
		\begin{equation}\label{eq_5.36}
			\begin{split}
			&-2|A|^2\frac{\psi'(W)}{W}|\nabla W|^2-|A|^2\psi''(W)|\nabla W|^2+\frac{3}{2}\frac{\psi'^2(W)}{\psi}|A|^2|\nabla W|^2\\
			& \,\, =-\left(2\frac{\psi'(W)}{W}+\psi''(W)-\frac{3}{2}\frac{\psi'^2(W)}{\psi(W)}\right)|A|^2|\nabla W|^2\leq 0
			\end{split}
		\end{equation}
		one concludes that
		\begin{equation}\label{eq_5.37}
			\begin{split}
				& \left(\partial_t-\Delta\right)f\leq (|A|^4 + |A|^2   \overline{{\rm Ric}}(N,N)) (2\psi(W)-\psi'(W)W)\\
				& \,\,+2\psi (W)(C|A|+\widetilde C |A|^2)-2n\sigma\psi(W)\left(a^{ij} a_{i}^sa_{sj}+a^{ij}\bar{R}_{i00j}\right)-\frac{1}{\psi}\langle\nabla f,\nabla\psi\rangle. 
			\end{split}
		\end{equation}
		Given a constant $\varepsilon>0$ to be chosen later one has
		\begin{equation}\label{eq_5.38}
			2 a^{ij} a_i^s a_{sj} \le 2|A|^3 \le \varepsilon |A|^4 + \frac{1}{\varepsilon} |A|^2.
		\end{equation}
		Therefore
		\begin{align}
		    \label{eq_5.39}
				& \left(\partial_t-\Delta\right)f\leq |A|^4 ((2+\sigma\varepsilon)\psi(W)-\psi'(W)W) + |A|^2   \overline{{\rm Ric}}(N,N) (2\psi(W) \nonumber \\
				& \,\,-\psi'(W)W)+2\psi (W)(C|A|+\widetilde C|A|^2) +\frac{1}{\varepsilon}\sigma \psi(W) |A|^2 
				-2n\sigma \psi(W) a^{ij}\bar{R}_{i00j} 
                \\
                &-\frac{1}{\psi}\langle\nabla f,\nabla\psi\rangle. 
		\end{align}
		Note that  \eqref{eq_5.18} implies that
		\begin{equation}\label{eq_5.40}
			\begin{split}
				& \left((2+\sigma \varepsilon)\psi(W)-W\psi'(W)\right)|A|^4=\left(\frac{2+\sigma \varepsilon}{\psi(W)}-W\frac{\psi'(W)}{\psi^2(W)}\right)\psi^2|A|^4\\
				& \,\, \left(\sigma\varepsilon \left(\frac{\gamma}{W^2}-\delta\right)- 2\delta\right) f^2 \leq(\sigma \varepsilon(\gamma\varrho^2-\delta)-2\delta)f^2 
			\end{split}
		\end{equation}
		as well as
		\begin{equation}\label{eq_5.41}
			\begin{split}
				& \left(2\psi(W)-W\psi'(W)\right)|A|^2\overline{\Ric}(N,N)=\text{\small $\left(\frac{2}{\psi(W)}-W\frac{\psi'(W)}{\psi^2(W)}\right)\overline{\Ric}(N,N)\,\psi^2|A|^2$}\\
				& \,\, = -2\delta\overline{\Ric}(N,N) \psi f.
			\end{split}
		\end{equation}
		It follows that
		\begin{equation}\label{eq_5.42}
			\begin{split}
				& \left(\partial_t-\Delta\right)f \leq(\sigma \varepsilon(\gamma\varrho^2-\delta)-2\delta)f^2 -2\delta\overline{\Ric}(N,N) \psi f+2C\psi|A|+2\widetilde Cf
				\\
				& \,\,+\frac{\sigma}{\varepsilon}f+2n\sigma |\bar{R}(\cdot, N, N, \cdot)|  |A| \psi  -\frac{1}{\psi}\langle\nabla f,\nabla\psi\rangle. 
			\end{split}
		\end{equation}
		Since $\overline{\Ric}\geq -L$ for some $L\geq 0$  and $\delta \psi\ge  \widetilde\delta$ we obtain
		\begin{equation}\label{eq_5.43}
			2\widetilde C-2\delta\psi\,\overline{\Ric}(N,N)+\frac{\sigma}{\varepsilon}\leq 2(\widetilde C +\widetilde \delta L)+\frac{\sigma}{\varepsilon}=b.
		\end{equation}
		Since $\gamma\varrho^2 \le 1$ denoting $a=2\delta -\sigma \varepsilon(1-\delta)$ one has
		\begin{equation}\label{eq_5.44}
			\left(\partial_t-\Delta\right)f\leq -\frac{1}{\psi}\langle\nabla f,\nabla\psi\rangle-af^2+bf+2(C+n\sigma |\bar R (\cdot, N, N, \cdot)|)\sqrt\psi \sqrt f 
		\end{equation}
		what ends the proof.
	\end{proof}
	
	\vspace{3mm}
	
	\begin{proposition}\label{curv-est} Let $R'\in (0, R)$ be fixed so that \eqref{eq_5.21} holds. Then
		the norm of the Weingarten map $A$ and its covariant derivatives are bounded in $B_{R'} (o) \times [0,T]$ by
		constants that depend on ${\sup}_{B_R(o)} W^2(\cdot, 0)$ and on the  geometric data ${\sup}_{B_R(o)}\varrho$, $\xi(R)$, $\zeta(R)$, $C$, $\widetilde C$ and $L$.  
	\end{proposition}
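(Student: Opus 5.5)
The plan follows the Ecker--Huisken/Borisenko--Miquel localization scheme: multiply $f=\psi(W)|A|^2$ by a space-time cutoff built from $\zeta$ and apply the maximum principle to the product. We use the cutoff already employed for the gradient estimate, namely
\[
\eta = \big(\bar\zeta(R)-\zeta(\widetilde\Psi(x,t))-C_Rt\big)_+^2 ,
\]
possibly multiplied by a power of $t$ to handle merely Lipschitz initial data. This function is supported in $\mathcal{U}_{R,T'}$. By \eqref{eq_3.13}, \eqref{cond-3} and the choice of $C_R$, it satisfies
\[
(\partial_t-\Delta)\eta\le -2|\nabla\zeta|^2 \le 0,
\qquad
|\nabla\eta|^2\le 4\,\xi^2\,\eta .
\]
Hence $(\partial_t-\Delta)\eta\le 0$ and $|\nabla\eta|^2/\eta$ is bounded by $4\sup_{B_R(o)}\xi^2$. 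Inside $\mathcal{U}_{R,T'}$, the gradient bounds of Proposition~\ref{evolutionW} and of the preceding proposition make $\psi(W)$ uniformly bounded, since $\delta\psi\le 1$ by \eqref{eq_5.22}, and bounded below by $\widetilde\delta/\delta$.

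The first step is to compute $(\partial_t-\Delta)(\eta f)$ from the preceding lemma:
\[
(\partial_t-\Delta)(\eta f)\le \eta\Big(-\tfrac1\psi\langle\nabla f,\nabla\psi\rangle-af^2+bf+2C'\sqrt\psi\sqrt f\Big)+f(\partial_t-\Delta)\eta-2\langle\nabla\eta,\nabla f\rangle .
\]
Here $C'=C+n\sigma\sup|\bar R(\cdot,N,N,\cdot)|$. The gradient terms are rewritten in terms of $\nabla(\eta f)$:
\[
\nabla f=\frac{\nabla(\eta f)-f\nabla\eta}{\eta}.
\]
The leftover terms are of the form $f\,|\nabla\eta|^2/\eta$ and $f\,\langle\nabla\eta,\nabla\psi\rangle/\psi$. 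The first is bounded by a constant times $f$. For the second we use $|\nabla\psi|\le\psi'|\nabla W|$ together with $|\nabla W|\le C|A|W^2$, which follows from \eqref{eq_3.29}. This bounds it by $c\,\eta^{1/2}f^{3/2}$, and Young's inequality absorbs it into $\tfrac a2\eta f^2$ at the cost of a term $c\,f$.

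At an interior maximum of $\eta f$ (it vanishes on the parabolic boundary away from $t=0$, and at $t=0$ it is controlled by the initial curvature, or by $0$ if a factor $t$ is included), we obtain
\[
0\le -\tfrac a2\,\eta f^2 + c_1 f + c_2\sqrt f .
\]
Multiplying by $\eta$ yields a quadratic inequality for $\eta f$ with constants depending only on the listed data. Hence $\eta f\le c_3$, where $c_3$ depends on $\zeta(R)$, $\xi(R)$, $\sup\varrho$, $C$, $\widetilde C$, $L$ and $\sup W^2(\cdot,0)$. On $B_{R'}(o)\times[0,T]$, condition \eqref{eq_5.21} gives $\eta\ge \zeta(R)^2/4$. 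Since $\psi\ge\widetilde\delta/\delta$, this bounds $|A|^2$.

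Higher derivatives are handled by induction. The evolution of $|\nabla^mA|^2$ has the schematic form
\[
(\partial_t-\Delta)|\nabla^mA|^2\le -2|\nabla^{m+1}A|^2+\sum \nabla^iA*\nabla^jA*\nabla^kA*\nabla^mA+(\text{ambient curvature and }\sigma\text{ terms}).
\]
We apply the standard Shi/Ecker--Huisken trick to $\eta_m\,|\nabla^mA|^2\,(\Lambda+|\nabla^{m-1}A|^2)$ with shrinking radii between $R'$ and $R$.

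The main obstacle is ensuring $a>0$ and controlling the cross term $\langle\nabla\eta,\nabla\psi\rangle$. For $a>0$ we choose $\varepsilon$ small, namely $\sigma\varepsilon(1-\delta)<\delta$ (trivial when $\sigma\le0$). For the cross term, the crude bound on $|\nabla W|$ must be paired carefully with the $-a f^2$ term. If this fails, one can fall back on keeping the good term $-(2\psi'/W+\psi''-\tfrac32\psi'^2/\psi)|A|^2|\nabla W|^2$ from \eqref{eq_5.36} rather than discarding it.
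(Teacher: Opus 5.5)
Your proposal is correct and follows the paper's proof essentially step by step. You use the same cutoff $(\bar\zeta(R)-\zeta-C_Rt)^2$ with $(\partial_t-\Delta)\eta\le -2|\nabla\zeta|^2$, the same test function $f=\psi(W)|A|^2$ and its evolution inequality, the same rewriting of the gradient terms through $\nabla(\eta f)$ (leaving $6|\nabla\zeta|^2f$ plus a cross term controlled via \eqref{eq_3.29}), the lower bound $\eta\ge\zeta(R)^2/4$ on $B_{R'}(o)\times[0,T]$, and an inductive cutoff argument for $\nabla^\ell A$. The differences are cosmetic: you absorb the $\sqrt{\eta}\,f^{3/2}$ term by Young's inequality where the paper solves a cubic inequality in $\sqrt{\phi f}$, and you use a Shi-type product where the paper uses $|\nabla^\ell A|^2+\lambda|\nabla^{\ell-1}A|^2$; note also that \eqref{eq_3.29} gives $|\nabla W|\le CW^2(1+|A|)$ rather than $C|A|W^2$ because of the $\bar\nabla\log\varrho$ terms, which only adds a harmless term linear in $f$.
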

	
	\begin{proof} Now, we consider the function
		\begin{equation}\label{eq_5.45}
			\phi(\widetilde\Psi(x,t))=(\zeta(R)-\zeta(\widetilde\Psi(x,t))-C_R t)^2
		\end{equation}
		defined in the set
		\begin{equation}\label{eq_5.46}
			\mathcal{U}_{R, T '} = {\bigcup}_{t\in [0, T '] }  \big\{ y = \widetilde\Psi(x,t) : \zeta(\widetilde\Psi(x,t)) + C_R t \le \zeta(R)\big\}.
		\end{equation}
		Using \eqref{eq_3.13} and \eqref{eq_3.109}  one obtains
		\begin{equation}\label{eq_5.47}
			\begin{split}
				& \left(\partial_t-\Delta\right)\phi\leq -2(\zeta(R)-\zeta -C_R t) (\partial_t\zeta -\Delta\zeta +C_R) - 2 |\nabla \zeta|^2\\
				& \,\, \le 2(\zeta(R)-\zeta -C_R t) (n \xi'(r) + n\sigma\xi(r) \langle\bar\nabla r, N\rangle -C_R) - 2 |\nabla \zeta|^2.
			\end{split}
		\end{equation}
		Since 
		\begin{equation}\label{eq_5.48}
			C_R =  {\sup}_{B_R(o)} (\xi'+ \sigma \xi)
		\end{equation}
		we have 
		\begin{equation}\label{eq_5.49}
		\left(\partial_t-\Delta\right)\phi\leq -
		2|\nabla\zeta|^2. 
		\end{equation}
		Therefore we compute
		\begin{equation}
			\begin{split}\label{eq_5.50}
			& \left(\partial_t-\Delta\right)(\phi f)=f\left(\partial_t-\Delta\right)\phi +\phi\left(\partial_t-\Delta\right)f-2\langle\nabla\phi,\nabla f\rangle\\
			& \,\, \le -2f |\nabla\zeta|^2-
			\Big\langle \nabla(\phi f)-f\nabla\phi,\frac{\nabla\psi}{\psi}\Big\rangle-a\phi f^2
			+b\phi f\\ 
			&\,\,+2\phi(C+|\bar{R}(\cdot, N, N, \cdot)|)\sqrt{\psi }\sqrt f-2\Big\langle \frac{\nabla\phi}{\phi},\nabla(\phi f)-f\nabla\phi\Big\rangle.
			\end{split}
		\end{equation}
		However we have
		\begin{equation}\label{eq_5.51}
			-2|\nabla\zeta|^2 +2\frac{|\nabla\phi|^2}{\phi}= - 2 |\nabla \zeta|^2 + 8 \frac{(\zeta(R)-\zeta(r) -C_R t)^2}{(\zeta(R)-\zeta(r) -C_R t)^2}|\nabla \zeta|^2 =6|\nabla\zeta|^2.
		\end{equation}
		Hence,
		\begin{equation}\label{eq_5.52}
			\begin{split}
			& \left(\partial_t-\Delta\right)(\phi f)\leq 
			6|\nabla\zeta|^2f-\Big\langle \nabla(\phi f),\frac{\nabla\psi}{\psi}+2\frac{\nabla\phi}{\phi}\Big\rangle+\Big\langle f\nabla\phi,\frac{\nabla\psi}{\psi}\Big\rangle\\
			&\,\,\,\,-a\phi f^2+b\phi f+2\phi(C+\sigma |\bar R (\cdot, N, N, \cdot)|)\sqrt{\psi}\sqrt f.
			\end{split}
		\end{equation}
		Since that $\nabla\psi=\psi'(W)\nabla W$ and
		\begin{equation}\label{eq_5.53}
			\nabla W^{-1}=\nabla\langle X,N\rangle=\langle X,N\rangle (\bar\nabla\log\varrho)^\top-\langle\bar\nabla\log\varrho,N\rangle X^\top-AX^\top
		\end{equation}
		we have
		\begin{equation}\label{eq_5.54}
			\frac{\nabla\psi}{\psi} = -\frac{2\gamma}{\gamma-\delta W^2} W\left(\langle X,N\rangle\bar\nabla\log\varrho -\langle\bar\nabla\log\varrho ,N\rangle X-AX^\top\right).
		\end{equation}
		Hence,
		\begin{equation}\label{eq_5.55}
			\begin{split}
				\left|\frac{\nabla\psi}{\psi}\right| \le 
				4\frac{\gamma}{\delta} |\bar\nabla \varrho|+  2\gamma \varrho \frac{\sqrt{\psi(W)}}{W} \sqrt  f \leq 4\delta^{-1}{\sup}_{\mathcal{C}_R}|\bar{\nabla}\log\varrho|+4\sqrt{\psi}\sqrt f.
			\end{split}
		\end{equation}
		where we used that $\psi(W) \le \delta^{-1}$. Denoting  $c=4\delta^{-1}{\sup}_{B_R(o)} |\bar{\nabla}\log\varrho|$ 
		we have
		\begin{equation}\label{eq_5.56}
			\begin{split}
			& \left(\partial_t-\Delta\right)(\phi f)\leq 
			6|\nabla\zeta|^2f-\Big\langle \nabla(\phi f),\frac{\nabla\psi}{\psi}+2\frac{\nabla\phi}{\phi}\Big\rangle+|\nabla\phi|\big(c+4\sqrt{\psi}\sqrt f\big)f\\
			& -a\phi f^2+b\phi f
			+2\phi(C+|\bar{R}(\cdot, N, N, \cdot)|)\sqrt{\psi}\sqrt f.
			\end{split}
		\end{equation}
		We conclude that at a point where  $\phi f$ attains a maximum value in  $\mathcal{U}_{R', T_R }\subset \mathcal{U}_{R,T ' }$ it holds (in case $t\neq 0$) that 
		\begin{equation}\label{eq_5.57}
			\begin{split}
				& a\phi f^2\leq 4|\nabla\phi| \sqrt{\psi}\sqrt f f
				+ \left(6|\nabla\zeta|^2+ b\phi +c|\nabla\phi|\right)f\\
                & \,\, +2(C+|\bar {R}(\cdot, N, N, \cdot)|)\sqrt{\phi\psi}\sqrt{\phi f}
			\end{split}
		\end{equation}
		so multiplying by $\sqrt{\phi}/\sqrt{f}$ and grouping the terms
		\begin{equation}\label{eq_5.58}
			\begin{split}
			& a(\sqrt{\phi f})^3\leq 4|\nabla\phi|\sqrt{\psi}\sqrt{\phi}f
			+\left(6|\nabla\zeta|^2+
			b\phi +c|\nabla\phi|\right)\sqrt{\phi f}\\
			&\,\,\,\,
			+2(C+|\bar {R}(\cdot, N, N, \cdot)|)\phi\sqrt{\psi}\sqrt{\phi}.
			\end{split}
		\end{equation}
		Considering that 
		\begin{equation}\label{eq_5.59}
			\nabla\phi=-2\sqrt{\phi}\,\nabla \zeta = -2\sqrt{\phi}\,\xi(r)\nabla r\;\;\;\;\mbox{and}\;\;\;\;\phi\leq \zeta^2(R)\;\;\;\;\mbox{and}\;\;\;\;\sqrt{\psi}\leq\frac{1}{\sqrt{\delta}}
		\end{equation}
		and using that $|\nabla r|\le 1$ one concludes that
		\begin{equation}\label{eq_5.60}
			\begin{split}
			& a(\sqrt{\phi f})^3\leq\frac{8}{\sqrt{\delta}} \xi(r) (\sqrt{\phi f})^2 +\left(6\xi^2(r)+2c\xi(r) \zeta(R)+b\zeta^2(R)\right)(\sqrt{\phi f})\\
			&\,\,+\frac{2}{\sqrt{\delta}}(C+|\bar {R}(\cdot, N, N, \cdot)|)\zeta^3(R).
			\end{split}
		\end{equation}
		Therefore
		\begin{equation}\label{eq_5.61}
			\text{\small $a\left(\frac{\sqrt{\phi f}}{\zeta(R)}\right)^3-\frac{8}{\sqrt{\delta}}\frac{\xi(R)}{\zeta(R)}\left(\frac{\sqrt{\phi f}}{\zeta(R)}\right)^2 -\left(6\frac{\xi^2(R)}{\zeta^2(R)}+2c\frac{\xi(R)}{\zeta(R)}+b\right)\left(\frac{\sqrt{\phi f}}{\zeta(R)}\right)
			-\frac{2}{\sqrt{\delta}}C\leq 0.$}
		\end{equation}
		In this case, either
		\begin{equation}\label{eq_5.62}
			a\left(\frac{\sqrt{\phi f}}{\zeta(R)}\right)^3-\frac{2}{\sqrt{\delta}}C \le \delta
			\left(\frac{\sqrt{\phi f}}{\zeta(R)}\right)^3 
		\end{equation}
		or 
		\begin{equation}\label{eq_5.63}
			\delta
			\left(\frac{\sqrt{\phi f}}{\zeta(R)}\right)^3  -\frac{8}{\sqrt{\delta}}\frac{\xi(R)}{\zeta(R)}\left(\frac{\sqrt{\phi f}}{\zeta(R)}\right)^2-\left(6\frac{\xi^2(R)}{\zeta^2(R)}+2c\frac{\xi(R)}{\zeta(R)}+b\right)\left(\frac{\sqrt{\phi f}}{\zeta(R)}\right)  \leq 0.
		\end{equation}
		We conclude that
		\begin{equation}\label{eq_5.64}
			\begin{split}
				&  \frac{\sqrt{\phi f}}{\zeta(R)} \leq C_1:=\max\Bigg\{ \frac{(2C)^{\frac{3}{2}}}{\sqrt\delta},  \frac{4}{\delta^{\frac{3}{2}}}\frac{\xi(R)}{\zeta(R)} + \left( \frac{16}{\delta^3}\frac{\xi^2(R)}{\zeta^2(R)} + \frac{1}{\delta}\left(6\frac{\xi^2(R)}{\zeta^2(R)} + 2c\frac{\xi(R)}{\zeta(R)}+b\right)\right)^{\frac{1}{2}}.
			\end{split}
		\end{equation}
		Therefore 
		\begin{equation}\label{eq_5.65}
			{\inf}_{B_{R'}(o) \times [0, T]}\left(1-\frac{\zeta(r)}{\zeta(R)}-\frac{C_R}{\zeta(R)} t\right)\sqrt{f}\leq  C_1.
		\end{equation}
		Since $f=\psi (W)|A|^2$ the choice of $R'\in (0,R)$ in \eqref{eq_5.21} implies that
		\begin{equation}\label{eq_5.66}
			{\inf}_{B_{R'}(o) \times [0, T]}\left(1-\frac{\zeta(r)}{\zeta(R)}-\frac{C_R}{\zeta(R)} t\right) \ge \frac{1}{2}
		\end{equation}
		we have
		\begin{equation}\label{eq_5.67}
			{\sup}_{B_{R'}(o)\times [0,T]}|A|\leq  2   \sqrt{1-\delta} \,  C_1\le 2C_1.
		\end{equation}
		This shows that $|A|$ is bounded in $B_{R'}(o)\times [0, T]$ by some constant that depends on ${\sup}_{B_R(o)} W^2(\cdot, 0)$ and on the  geometric data ${\sup}_{B_R(o)}\varrho$, $\xi(R)$, $\zeta(R)$, $C$, $\widetilde C$ and $L$.  
		Since
		\begin{equation}\label{eq_5.68}
			\widetilde\delta = \frac{\delta}{1-\delta}  =\frac{\gamma}{2\,{\rm sup}_{B_{R'}(o)\times [0,T]}W^2- \gamma}\le \frac{\gamma}{2\,{\rm inf}_{B_{R'}(o)}\varrho^{-2}- \gamma}  = \frac{\gamma}{\gamma}=1,
		\end{equation}
		we have $b=2(\widetilde C + \widetilde \delta L) \le  2(\widetilde C +  L).$ We also recall that 
		\begin{equation}\label{eq_5.70}
			c = \frac{4}{\delta}\, {\sup}_{B_R(o)} \frac{|\bar\nabla\varrho|}{\varrho}\cdot
		\end{equation}
		Hence we conclude that there exist positive constants $C, C'$ depending on the geometric data listed above such that
		\begin{equation}\label{eq_5.71}
			|A|(o,t)  \le \big(C \,{\rm sup}_{B_R(o)\times [0, T]} W + C' \, {\rm sup}_{B_R(o)\times [0, T]} W^{3}\big) \frac{\xi(R)}{\zeta(R)}
		\end{equation}
		for $t\in [0, T]$. This finishes the proof of the desired curvature estimate.

\medskip
        
		\par From this estimate we can conclude that  the covariant derivatives of $A$ are also bounded. Indeed, proceeding inductively as in  \cite{Ecker:91} and \cite{Borisenko:12} one supposes that
		for each $k =0, 1,\ldots, \ell-1$   there exists a constant $C_k = C_k (R, T)$ so that
		\begin{equation}\label{eq_5.72}
			|\nabla^kA|\leq C_k  \;\;\;\mbox{for}\;\;\;k=0,1,\cdots,\ell-1
		\end{equation}
		where $C_k$ depends on the bounds of $|\nabla^m A|$  and on the tensors $\bar\nabla^m \bar R$ for $0\leq m\leq k-1$ in $B_R(o)\times [0, T]$ and on the same sort of geometric data as above. 
		\par As in  \cite{Ecker:91} and \cite{Borisenko:12}
		we are going to use variants of the Simons' inequality for higher order covariant derivatives of $A$ which have the form
		\begin{equation}\label{eq_5.73}
			\frac{1}{2}(\partial_t- \Delta)|\nabla^\ell A|^2+|\nabla^{\ell+1}A|^2\le D_\ell (|\nabla^\ell A|^2+1)
		\end{equation}
		where the constant $D_\ell$ depends on the bounds of $|\nabla^kA|$  and on the tensors $\bar\nabla^k \bar R$ for $0\leq k\leq \ell-1$ in $B_R(o)\times [0, T]$.
		Now, we define
		\begin{equation}\label{eq_5.74}
		h=|\nabla^\ell A|^2+\lambda|\nabla^{\ell -1}A|^2
		\end{equation}
		where $\lambda$ is a constant to be specified later. Setting $\lambda\ge 2D_\ell$ one obtains
		\begin{equation}\label{eq_5.75}
			\begin{split}
				\frac{1}{2}\partial_t  h  \le  \frac{1}{2}\Delta h   - \frac{\lambda}{2}  h + \frac{\lambda^2}{2} |\nabla^{\ell-1} A|^2 +  D_{\ell-1} |\nabla^{\ell-1} A|^2 + D_\ell + \lambda D_{\ell-1}.
			\end{split}
		\end{equation}
		Choosing $\lambda^2 \ge 2D_{\ell-1}$ we conclude that 
		\begin{equation}\label{eq_5.76}
			(\partial_t - \Delta) h \le  -\lambda  h + \lambda^2 \widetilde{C}_\ell + \widetilde D_{\ell},
		\end{equation}
		where $\widetilde D_\ell = 2D_\ell + 2\lambda D_{\ell-1}$ and  $\widetilde C_\ell =  2|\nabla^{\ell-1} A|^2$.
		Proceeding similarly as above one computes
		\begin{equation}\label{eq_5.77}
			\begin{split}
				\left(\partial_t-\Delta\right)(\phi h)\leq- 2 h |\nabla\zeta|^2 + (-\lambda  h + \lambda^2 \widetilde{C}_\ell + \widetilde D_{\ell})\phi-2\left\langle\phi^{-1}\nabla\phi,\nabla(\phi h)- h\nabla\phi\right\rangle.
			\end{split}
		\end{equation}
		Therefore
		\begin{equation}\label{eq_5.78}
			\begin{split}
				& \left(\partial_t-\Delta\right)(\phi h)+2\left\langle\phi^{-1}\nabla\phi,\nabla(\phi h)\right\rangle \le - 2 h |\nabla\zeta|^2 + 2\phi^{-1}|\nabla \phi|^2 h \\
                & \,\,+ (-\lambda  h + \lambda^2 \widetilde{C}_\ell + \widetilde D_{\ell})\phi.
			\end{split}
		\end{equation}
		Using again that 
		\[
		-2|\nabla \zeta|^2+ 2\phi^{-1}|\nabla \phi|^2 = 6 |\nabla \zeta|^2
		\]
		one concludes that
		\begin{equation}\label{eq_5.79}
			\begin{split}
				& \left(\partial_t-\Delta\right)(\phi h)+2\left\langle\phi^{-1}\nabla\phi,\nabla(\phi h)\right\rangle \le  6 |\nabla\zeta|^2 h - \lambda \phi h + (\lambda^2 \widetilde{C}_\ell + \widetilde D_{\ell})\phi.
			\end{split}
		\end{equation}
		We have at a maximum point of  $\phi h$ that
		\begin{equation}\label{eq_5.80}
			(\lambda\phi-6\xi^2(r))h\leq (\lambda^2 \widetilde{C}_\ell + \widetilde D_{\ell})\zeta^2(R).
		\end{equation}
		Since that $\phi\geq\zeta^2(R)/4$ in  $B_{R'} (o) \times [0, T]$ we have
		\begin{equation}\label{eq_5.81}
			\left( \frac{\lambda}{4}-6\frac{\xi^2(R)}{\zeta^2(R)}\right) h \leq \lambda^2 \widetilde{C}_\ell + \widetilde D_{\ell}.
		\end{equation}
		Setting
		\begin{equation}\label{eq_5.82}
			\lambda\ge \max\left\{ 2 D_\ell, (2D_{\ell-1})^{1/2}, 48  \frac{\xi^2(R)}{\zeta^2(R)}\right\}
		\end{equation}
		one obtains
		\begin{equation}\label{eq_5.83}
		h\leq \frac{1}{6} \frac{\zeta^2(R)}{\xi^2(R)} \big(\lambda^2 \widetilde{C}_\ell + \widetilde D_{\ell}\big).
		\end{equation}
		We conclude that
		\begin{equation}\label{eq_5.84}
			|\nabla^\ell A|^2\leq \frac{1}{6} \frac{\zeta^2(R)}{\xi^2(R)} \big(2\lambda^2 |\nabla^{\ell-1}A|^2
			+ \widetilde D_{\ell}\big) - \lambda |\nabla^{\ell-1}A|^2.
		\end{equation}
		A suitable choice of a large enough $\lambda$ yields the desired estimate. This finishes the proof.
	\end{proof}

	\section{Proof of the main existence result}\label{proof-thm}
	
	In this section we assemble the results obtained previously to guarantee the existence of solution to the modified mean curvature flow.
	
	\begin{proof}[Proof of Theorem \ref{thm_0.1}] We consider an exhaustion $\{B_{R_k}(o)\}_{k\in \mathbb{N}}$ of $M$ by geodesic balls  centered at a pole $o\in M$ whose radius form a increasing sequence with $R_k \to \infty$ as $k\to\infty$.  The  gradient \emph{a priori} estimates in propositions \ref{evolutionW}, \ref{intest} and \ref{prop_4.1}  imply that the equation is uniformly parabolic in $B_{R_k'} (o) \times [0, T]$ for any fixed $T>0$. Hence, Theorem \ref{local-thm} guarantees the existence 
        of a unique solution \( u_k \), defined in \( B_{R_k'}(o) \times [0, \infty) \) of the following equation
		\begin{equation}\label{eq_6.2}
			\partial_t u = W\bigg(\operatorname{div}\left(\frac{\nabla u}{W}\right)+\left\langle\nabla\log\varrho,\frac{\nabla u}{W}\right\rangle-n\sigma\bigg)
		\end{equation}
		such that
		\begin{equation}\label{eq_6.3}
			\begin{cases} 
				u(\cdot,0)= u_0(\cdot) & \quad \mbox{in}\quad B_{R_k'}(o)\times\{0\}\\
				u(x,t)=u_0(x) & \quad  \mbox{on}\quad \partial B_{R_k'}(o)\times[0,\infty).
			\end{cases}
		\end{equation}
		\par Now we have to obtain uniform \emph{a priori} estimates for the higher derivatives of the local solutions $u_k$. In order to do that, fix $T>0$.  Given $k\in \mathbb{N}$, let $R_k' \in (0, R_k)$ such that 
		\begin{equation}\label{eq_6.1}
			\zeta(r) + C_{R_k} T\le \frac{1}{2}\zeta(R_k)
		\end{equation}
		for $r< R_k'$. The existence of such a sequence is granted for any $T$ since $\zeta$ and $R_k$ are increasing. Propositions \ref{evolutionW}, \ref{intest} and \ref{prop_4.1}  guarantee that any solution of $\eqref{eq_6.2}$ on a time interval $[0,T]$ must satisfy
		\begin{equation}\label{eq_6.4}
			\sup_{{\overline B}_{R_k'}(o)\times[0,T]}|\nabla u_{k}|\leq C_0
		\end{equation}
		for some constant $C_0$.
	Hence, for the pair $(x,u_k(x,t))\in \mathcal{U}_{R_k',T}$ we use the estimates of $W,|A|$ and $|\nabla^\ell A|$ (see proposition \ref{curv-est}) to obtain for any integer $l\geq 0$ a constant $C_\ell$ such that  
    \begin{equation}
	\sup_{{\overline{B}_{R_k'}(o)}\times[0,T]}|\nabla^\ell u_k|\leq C_\ell
    \end{equation}
    for some constant $C_\ell$ depending on $C_0$. Thence, we guarantee uniform \emph{a priori} estimates for all derivatives of $u_k$ for each  $B_{R_k'} \times [0, T]$. In particular, for any $k$, the sequence $\{u_\ell\}_{\ell \ge k}$ has a subsequence that converges uniformly in compact subsets of $B_{R_k'}$ to a \emph{smooth} solution $u$ of \eqref{eq_6.2} in $B_{R_k'}\times [0, \infty)$ with $u(\cdot, 0) = u_0(\cdot)$. A usual diagonal process yields
 a solution $u$ of \eqref{eq_6.2} in $M\times [0,\infty)$. 

The general case of Lipschitz initial data can be dealt with by approximation arguments as  in \cite{Ecker:91}, \cite{Unterberger:03} and \cite{Borisenko:12}.
\end{proof}

\section*{Acknowledgements}
J.H.S. de Lira is partially suppoted by CNPq grant PQ-306626/2022-5. M.N. Soares is partially supported by FACEPE grant BFP-0015-1.01/25 and CNPq grant SWP-317751/2023-9. 

\end{document}